\newtheorem{thm}{Theorem}[section] 
\newtheorem*{thm*}{Theorem} 
\newtheorem{cor}[thm]{Corollary} 
\newtheorem{lem}[thm]{Lemma} 
\newtheorem{conj}[thm]{Conjecture} 
\newtheorem*{conj*}{Conjecture} 
\newtheorem{prop}[thm]{Proposition} 
\newtheorem{defn}[thm]{Definition} 
\newtheorem{exa}[thm]{Example}
\newtheorem*{ques*}{Question}
\def\all{\ \forall}
\def\inv{^{-1}}
\def\R{\mathbb{R}}
\def\Z{\mathbb{Z}}
\def\G{\mathcal{G}}
\def\st{\mbox{\ s.t.\ }}
\def\comp{\circ}
\def\set#1{\left\{ #1\right\}}
\def\ie{\textit{i.e.\ }}
\def\eg{\textit{e.g.\ }}
\def\inclusion#1#2{\xymatrix{ #1\ar@{^{(}->}[r] & #2}}
\def\incl{\ar@{^{(}->}}
\def\maps{\ar@{|->}}
\def\monic{\ar@{>->}}
\def\bd{\begin{defn}}
\def\ed{\end{defn}}
\def\ec{\end{cor}}
\def\bc{\begin{cor}}
\def\bl{\begin{lem}}
\def\el{\end{lem}}
\def\bt{\begin{thm}}
\def\et{\end{thm}}
\def\bex{\begin{exa}}
\def\eex{\end{exa}}
\def\bp{\begin{prop}}
\def\ep{\end{prop}}
\def\ben{\begin{enumerate}}
\def\een{\end{enumerate}}
\def\bi{\begin{itemize}}
\def\ei{\end{itemize}}
\def\be{\begin{equation}}
\def\bpm{\begin{pmatrix}}
\def\epm{\end{pmatrix}}
\def\ee{\end{equation}}
\def\cofun#1#2{\xymatrix{ #1 \ar@{ {~}{>}}[r] &  #2}  }
\def\confun#1#2{\xymatrix{ #1&  #2 \ar@{ {~}{>}}[l] }  }
\def\a{\alpha}
\def\nullset{\emptyset}
\def\S{{\mathcal S}}
\def\I{{\mathcal I}}
\def\K{{\mathcal K}}
\def\G{{\mathcal G}}
\def\D{{\mathcal D}}
\def\MM{{\mathfrak M}}
\def\I{{\mathcal I}}
\def\S{{\mathcal S}}
\DeclareMathOperator*{\HFK}{HFK}
\DeclareMathOperator*{\HF}{HF}
\title[Grid Movies]{Grid Movies}
\author[Matthew Graham]{Matthew  Graham}
\address{Department of Mathematics, 
Northwestern University,
2033 Sheridan Road Evanston,
Il 60208-2730 }
\email{mdgraham@math.northwestern.edu}
\begin{document}

\begin{abstract}
We present a grid diagram analogue of  Carter, Rieger and Saito's smooth movie theorem.  Specifically, we give definitions for grid movies, grid movie isotopies and  present a definition of grid planar isotopy as a particular subset of the grid diagram moves: stabilization, destabilization and commutation.  We show that grid planar isotopy classes are in 1-1 correspondence with smooth planar isotopy classes by using a new planar grid algorithm that takes a smooth knot diagram to a grid diagram. We then present generalizations of both the smooth and grid movie theorems that apply to surfaces  with boundary.

\end{abstract}

\maketitle

\section{Introduction}

The motivation for this topological work came from knot Floer homology.  We briefly sketch the chain of ideas leading up to the questions addressed in this paper presently.

In 2000 Ozsv\'{a}th and Szab\'{o} \cite{OS2004} introduced Heegaard Floer homology ($\HF$).  
$\HF$ is a very powerful theory that provides a unified approach to studying 3-and 4-manifolds and the knots and links within them.  
It comes in several flavors: the hat version $\widehat{\HF}$ is the simplest; the minus and plus versions $\HF^-,\HF^+$ contain more geometric information; and there are other versions.  
Knot Floer homology ($\HFK$), the version used to study knots and links was defined by Ozsv\'{a}th and Szab\'{o} \cite{OS:Knot2003}  and independently Rasmussen \cite{Rasmussen2003}.  
It uses  Heegaard diagrams and holomorphic disks and is an invariant of  oriented links in the 3-sphere, which, among other things,  detects the genus of a link $L$, determines if $L$ is fibered, and categorifies the Alexander polynomial.  
That is, the graded Euler characteristic of $\HFK(L)$ is the Alexander polynomial of $L$.  
A combinatorial method of calculating knot Floer homology was presented in \cite{MOS2006, MOT2009}.  
In \cite{MOST2006}, a completely combinatorial construction of $\HFK$, independent of holomorphic techniques, was given.  
These methods use a grid diagram representation of  an oriented knot as input.  
A \emph{grid diagram} of \emph{grid index} $n$ is an $n\times n$ grid where: there is  a single $X$ and a single $O$ in each column and row; vertically the $X$'s are connected to the $O$'s; horizontally the $O$'s are connected to the $X$'s; all vertical lines are overcrossings; the grid has torus identifications.

Recently, Sarkar \cite{Sarkar2010} defined  several maps between the $\HFK$ complexes of two different knots.  
In particular, he defined chain maps,  whose underlying grid diagram maps could be viewed as  births, deaths and saddles of a grid cobordism connecting two grid diagrams.  
Juh\'{a}sz \cite{Juhasz2009} using sutured Floer homology has already shown that $\widehat{\HFK}$  is functorial with respect to smooth decorated cobordisms.  
This leads one to conjecture the following.
\begin{conj}
Grid diagram maps  induce maps on $\HFK^-$ that are invariant with respect to marked smooth isotopy classes of surfaces.
\end{conj}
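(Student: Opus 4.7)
The plan is to reduce the conjecture to a finite set of local combinatorial checks via a grid-movie-theoretic analogue of the Carter--Rieger--Saito approach. First, I would make precise the notion of a grid cobordism by defining a \emph{grid movie} as a finite sequence of grid diagrams in which consecutive frames differ by a single elementary move (commutation, stabilization/destabilization, or one of Sarkar's birth, death or saddle moves). Two grid movies should be declared \emph{grid-movie isotopic} if they are connected by a finite sequence of local modifications on the movie itself, chosen so that the resulting equivalence classes correspond bijectively to smooth marked isotopy classes of cobordisms between the underlying links. The identification of the correct list of movie moves is the conceptual heart of the paper and is what the abstract calls the grid movie theorem.

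With the grid movie theorem in hand, the conjecture becomes equivalent to showing that each elementary movie move induces the identity (up to the appropriate notion of chain-homotopy equivalence) on the level of $\HFK^-$ maps. The strategy is then to partition the moves into three classes and handle them separately: (i) planar grid isotopies (commutations and certain stabilization/destabilization pairs contained in the planar grid isotopy subset) are handled by the 1-1 correspondence with smooth planar isotopy classes established via the new planar grid algorithm; (ii) Reidemeister-type moves between successive frames, which lift to $\HFK^-$-chain homotopies using the standard invariance proofs for combinatorial knot Floer homology from \cite{MOST2006}; (iii) the genuinely cobordism-theoretic moves, namely birth/death/saddle moves together with their distant commutativity and cancellation, which require verifying that Sarkar's chain maps satisfy the relations forced by the grid movie theorem.

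The main obstacle, I expect, is item (iii): showing that distant moves commute and that a birth followed by a saddle followed by a death cancel at the $\HFK^-$-chain level. Concretely, one must check that each pair of commuting local moves yields strictly commuting (or chain-homotopic) compositions of Sarkar's maps, and that the move relations imposed by locally modifying a grid movie (the analogues of the Carter--Saito movie moves on critical-value diagrams) are realized as explicit chain homotopies. This amounts to a long but finite case analysis, in which the most delicate cases involve interactions between saddles and stabilizations, since these are the moves that change grid index and therefore act on different underlying chain complexes.

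Finally, to extend the result to surfaces with boundary as advertised in the abstract, I would reinterpret boundary data as marked endpoints of the movie, redo the movie theorem for movies whose first and last frames are fixed boundary grid diagrams, and then check that the same local chain homotopies used in the closed case still apply verbatim, since all the movie moves are local. The output of this program is that the assignment from a grid cobordism to its induced map on $\HFK^-$ factors through smooth marked isotopy classes, which is precisely the content of the conjecture.
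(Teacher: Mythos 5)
This statement is a \emph{conjecture} in the paper, not a theorem: the paper does not prove it, and in fact immediately after stating it says the conjecture is ``premature'' for two explicit reasons --- (1) the correspondence between smooth surfaces/isotopies and sequences of grid diagrams had not yet been established, and (2) little is known about marked smooth isotopy classes. The paper's actual contribution is only to address point (1) by supplying the purely topological machinery: the planar grid algorithm, the equivalence of grid and smooth planar isotopy classes, the definitions of grid movies and grid movie moves, and Theorem~\ref{thm:GridMovieMoves}. The paper does not carry out any Floer-theoretic verification, and it explicitly defers all marking questions to a separate paper.

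Your proposal is therefore best read as a research program rather than a proof of anything this paper claims. Your item (i) is essentially the content of Theorem~\ref{thm:IsotopyClasses} and Theorem~\ref{thm:GridMovieMoves}, and your framing of the grid movie theorem matches the paper's. But your items (ii) and (iii) --- checking that Reidemeister-type moves, births/deaths/saddles, distant commutativity, and the movie-move relations are realized as chain homotopies on $\HFK^-$ --- constitute precisely the hard, undone analytic/algebraic work that the paper does \emph{not} attempt, and the paper's own text identifies that gap. Moreover, your plan essentially ignores the word ``marked'' in the conjecture: the conjecture concerns \emph{marked} smooth isotopy classes, and the paper cites Sarkar's result that rotating a marking once around a knot can induce a non-trivial automorphism of the knot Floer chain complex, so markings cannot be treated as an afterthought. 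Your brief remark about ``marked endpoints'' in the boundary case does not engage with this; the grid movie theorem you would need is one for \emph{marked} grid movies, which this paper explicitly does not develop. So the genuine gap is twofold: (a) nothing in your plan or in the paper establishes the $\HFK^-$ chain-homotopy identities for the 31 grid movie moves, and (b) the marking structure, which is essential to the conjecture as stated and known to matter at the chain level, is absent from both the paper's present results and your argument.
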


At this point, this conjecture is premature for at least two reasons: (1) the correspondence between smooth surfaces and these sequences is not understood which also means that it is not understood how smooth surface isotopies relate to the yet undefined notion of grid isotopy; (2) little is known about marked smooth isotopy classes.
The purpose of this paper is to  present the  correspondences and definitions needed for the first point.  That is, we define  grid movies and grid movie isotopies and give the correspondences between, respectively, smooth embedded surfaces in 4-space and smooth isotopies of these surfaces.
We take the first step in  addressing the marking concerns dealing with the second point in   \cite{Graham2014}.

However, before moving on we give a  brief explanation of why the marking correspondences  are necessary.
It was conjectured that  knot Floer homology is only well defined with a particular type of marking. 
In fact Sarkar in \cite{Sarkar2011} has shown that rotating this marking once around a knot induces a non-trivial automorphism of the knot Floer homology chain complex for most of the 85 prime knots up to nine crossings.
Therefore, we eventually need to understand the relation between marked grid movies and smooth marked surfaces embedded in 4-space.  However, we will only deal with the unmarked topological aspects in this paper.

In the rest of this section we present a brief introduction to different approaches used to study surfaces smoothly embedded in 4-space and their smooth isotopy classes and then give an outline of the paper.
However we first note that,  even though the inspiration to study grid movies came from $\HFK$, the results in this paper are purely topological in nature and should provide a framework to ask naturality questions of other theories that use grid diagrams.

The main idea is to use a sequence of grid diagrams to represent the embedded surface and then study the composition of maps   induced on the knot Floer chain complex.  
Fortunately, studying surfaces using a collection of knot diagrams is not a new idea.  
Fox studied surfaces using level sets in \cite{Fox62} and Roseman used knotted surface diagrams in \cite{Roseman1998} and Carter and Saito in \cite{CS1993} combined these two approaches and used a continuous collection of knot diagrams to characterize embedded surfaces in 4-space up to ambient isotopy.
Finally, in \cite{CRS1997}, Carter, Rieger and Saito reduced the amount of data needed to represent (and distinguish the isotopy type of) smooth embedded surfaces in 4-space to a specific finite sequence of classical knot diagrams and moves on these sequences.
It is this `second' movie move theorem, presented in \cite{CRS1997}, that will play the dominant role in the current paper since our goal is to present these surfaces as combinatorial objects in terms of grid diagrams.
To do this we will need to  understand  how the combinatorial structure presented in \cite{CRS1997} coincides with grid diagrams.
Furthermore, since this last theory uses elements of all the previous theories, we will briefly review all of the theories mentioned above presently.

To understand the constructions of these various theories we will begin by reviewing the well known procedure for studying knots in $\R^3$ (or $S^3$) using knot diagrams.  
A \emph{knot} $K=K(S^1)$ is a particular embedding of $S^1$ in an ambient space $K\colon{S^1}\to {\R^3}$.  
A \emph{knot diagram} $D_p(K)$ is a projection  of $K$ to a plane, $p\colon K\to \R^2$ along with some extra data  that keeps track of which strand is the overcrossing at each singular point of the projection.  
To understand knots up to ambient isotopy  the Reidemeister moves are required.  
Specifically, they  account for changes that can occur to the singular set of the projection $p$ in an ambient isotopy of  the embedding space $\R^3$.

Embedded surfaces in 4-space can be studied in a similar way.  
A \emph{knotted surface} $K$ is an embedding of some (possibly disconnected) underlying closed surface $F$ in $\R^4, \ K\colon F\hookrightarrow \R^4$.  
As in knot theory, we will  let $K$ refer either to the embedding $K\colon F \hookrightarrow\R^4$ or the image $K(F)$ of the embedding.
 A \emph{knotted surface diagram},  $\K:=p(K)$, is the image of the  projection of  $K$ to a hyperplane of $\R^4, \ p\colon K\to \R^3$  along  with a depiction of crossing information. Roseman \cite{Roseman1998} provided a set of seven Reidemeister type moves,  now known as Roseman moves, that operate on knotted surface diagrams and characterize ambiently isotopic surfaces.
Carter and Saito  refined this procedure.  
In \cite{CS1993} they studied knotted surface diagrams by choosing a Morse function that separates the critical points of the manifold and the singular sets of the projection.  
From this construction they defined a movie of a knotted surface diagram and supplied the Reidemeister type moves, now known as movie moves, needed to characterize ambiently isotopic surfaces that preserve this extra structure.  
In order to present the isotopy class of a knotted surface by a finite sequence of classical knot diagrams Carter, Rieger, and Saito introduced a second height function in each still and supplied a refined, and more numerous set of movie moves in \cite{CRS1997}.  
Stated briefly, when studying isotopy of surfaces in four dimensional space there are three options: (1) use the knotted surface diagrams and the seven Roseman moves; (2) use movies and the 15 movie moves; or (3) use movies with a height function in each still  and use the refined set of 31 movie moves.  

We will use the `second' movie move theorem that requires movies of knotted surface diagrams with an additional fixed height function in each still  and the 31 movie moves in this paper.  However, in order to understand this theorem it is necessary to understand the original movie move theorem.

\subsection{Original Movie Move Theorem}
Intuitively, one should think of a movie as the collection of inverse images associated to a  Morse function (or even a projection) $f\colon\K\to \R$.  
This gives an ordered collection of stills $\set{f\inv(s)}_{s\in \R}$, which one can analyze sequentially much like watching a movie by increasing or decreasing $s$.  

Carter and Saito used  movies of  knotted surface diagrams to characterize embedded smooth surfaces in 4-space up to ambient isotopy.  
Not surprisingly, they needed to keep track of more data in order to do this.  
Specifically, they needed to understand the critical points of the manifold as well as the critical points of the double and triple point sets of the projection.  
In the next few paragraphs, when defining movies of knotted surface diagrams, we follow \cite{CS1993} closely.

\begin{defn}
  For a closed surface $F$  a smooth map $f\colon F\to \R^3$ is \emph{generic} if it is an immersion except for some isolated points, which are cone points of figure 8's.
\end{defn}
The image of a generic map satisfies the following conditions.  
For all $y\in F$ there exists a neighborhood $N(y)$ such that   $(N(y), f(F)\cap N(y))$ is diffeomorphic to: ($B^3$, intersection of $i$ coordinate planes) where $i=1,2,3$ and $B^3$ contains the origin; or ($B^3$, the cone on a figure 8 curve) where the figure 8 curve is in the boundary of $N(y)$.

The \emph{$j$-tuple set} is $S_j=\set{y\in \R^3;\colon \#f\inv(y)=j}$.  
For $j=2,3$ this is the double point and triple point sets respectively.  
A \emph{branch point} is a point $y\in \R^3$ such that the intersection of  any neighborhood $N(y)$ with $f(F)$ contains a cone on a figure 8.  
Note that the closure of the double point set $\overline{S_2}$ contains the branch points and the triple points (see picture on the right of figure \ref{fig:IntuitivePicture}).

We would like to do more than just denote the double and triple point sets.  
We will need critical point data of these sets, which means we need to understand the manifold structure of $\overline{S_2}$.  
To do this,  note that  $\overline{S_2}$ is  the image of a compact 1-dimensional manifold (non-generically) immersed in $\R^3$.  
To define this curve we will use the configuration space
\[\widetilde{C}_r=\set{(x_1, \ldots, x_r)\colon  x_j\in F, x_j\neq x_k \mbox{ for } j\neq k, f(x_1)=\cdots f(x_r)}.\]
There is a free action of the permutation group, $\Sigma_r$ on this space.  
The \emph{$r$-decker} manifold is the associated $r$-fold covering space 
\[D_r=\widetilde{C}_r\times_{\Sigma_r}\set{1,2, \ldots, r}.\]
The quotient $C_r=\widetilde{C}_r/\Sigma_r$ is the \emph{$r$-tuple manifold}.  There are maps 
\begin{align*}
 & \tilde{f}_r\colon D_r\to F && f_r\colon C_r\to \R^3 \\
  & [(x_1, \ldots, x_r); j] \mapsto x_j  && [x_1, \ldots, x_r]\mapsto f(x_1)
\end{align*}
that make the square (involving the function $f$) in the following diagram commute and satisfy $f_r(C_r)=\cup_{i=r}^3 S_i$ for $r=1,2,3$.
\begin{align}
  \xymatrix{D_r\ar[rr]^q \ar[d]_{\tilde{f}_r}& & C_r \ar[d]^{f_r}& \\ F \incl[r] \ar@/_1pc/[rr]_f & \R^4 \ar[r]^p& \R^3 \ar[r]^{p_1}& \R}
\label{eqn:CommutativeDiagram}
\end{align}
The map $q\colon D_r\to C_r$ is the covering projection.  The image of $f_r$ is the $r$-decker set.

To include the branch points in this picture, we will consider them to be in the double point set even though the preimage of a branch point $y$, is a single point.  
With this understanding,  branch points are precisely the boundary of the double point set that lie in the interior of $F$.  
For consistency, we include $f\inv(y)$ in the double decker manifold and extend the covering $q$ to a branched covering.
\begin{defn}
  For a fixed embedding $F\hookrightarrow \R^4$ and generic map $f\colon F\to \R^3$ (where $f:=p\comp K$ as in diagram \ref{eqn:CommutativeDiagram}) a projection $p_1\colon\R^3\to \R$ is a \emph{generic  Morse function} for the knotting if: 
  \begin{enumerate}
  \item $p_1\comp f_r$ has only non-degenerate critical points for all $r=1,2,3$; 
  \item each critical point is at a distinct critical level of $p_1$. (Define critical points to include triple points and branch points.)
  \end{enumerate}
\end{defn}
Throughout this paper we will always choose to draw the Morse height function in the horizontal direction: see figure \ref{fig:IntuitivePicture} for an example.

\begin{defn}
\label{defn:SmoothMovie}
  A  \emph{smooth movie} of the knotted surface $K$ is a knotted surface diagram $\K$ with a fixed generic Morse function.
\end{defn}

Using Morse theory, each movie (of a knotted surface diagram) gives rise to a knotted surface and each knotted surface gives rise to a movie.
\begin{defn}
\label{defn:ComplimentaryCoordinateSystem}
  A \emph{complementary coordinate system} for an embedding is an orthonormal coordinate system of $\R^4$,  $(v,v_1,v_2,v_3)$ that satisfies:
  \begin{enumerate}
    \item  the projection orthogonal to  $v$,  $p\colon\R^4\to \R^3$ is generic;
    \item  projection  of $p(\R^4)$ onto the vector $v_1$ is a generic Morse function $p_1\colon\R^3\to \R$ for $K$ and $p\comp K$.
  \end{enumerate}
\end{defn}

A complementary coordinate system can be chosen for any knotted surface.  To see this, choose any orthonormal system and if the conditions are not satisfied for $v$ and $v_1$ perturb $v$ while remaining in $v_1^\perp$ to make the projection $p$ generic.  Similarly, if needed, perturb $v_1$ while remaining in $v^\perp$ to get a generic Morse function $p_1$ for $K$ and $p\comp K$.  Finally, choose $v_2\in span(v,v_1)^\perp$ and $v_3\in span(v,v_1,v_2)^\perp$

  \begin{figure}[h]
    \centering
    \includegraphics[width=\textwidth]{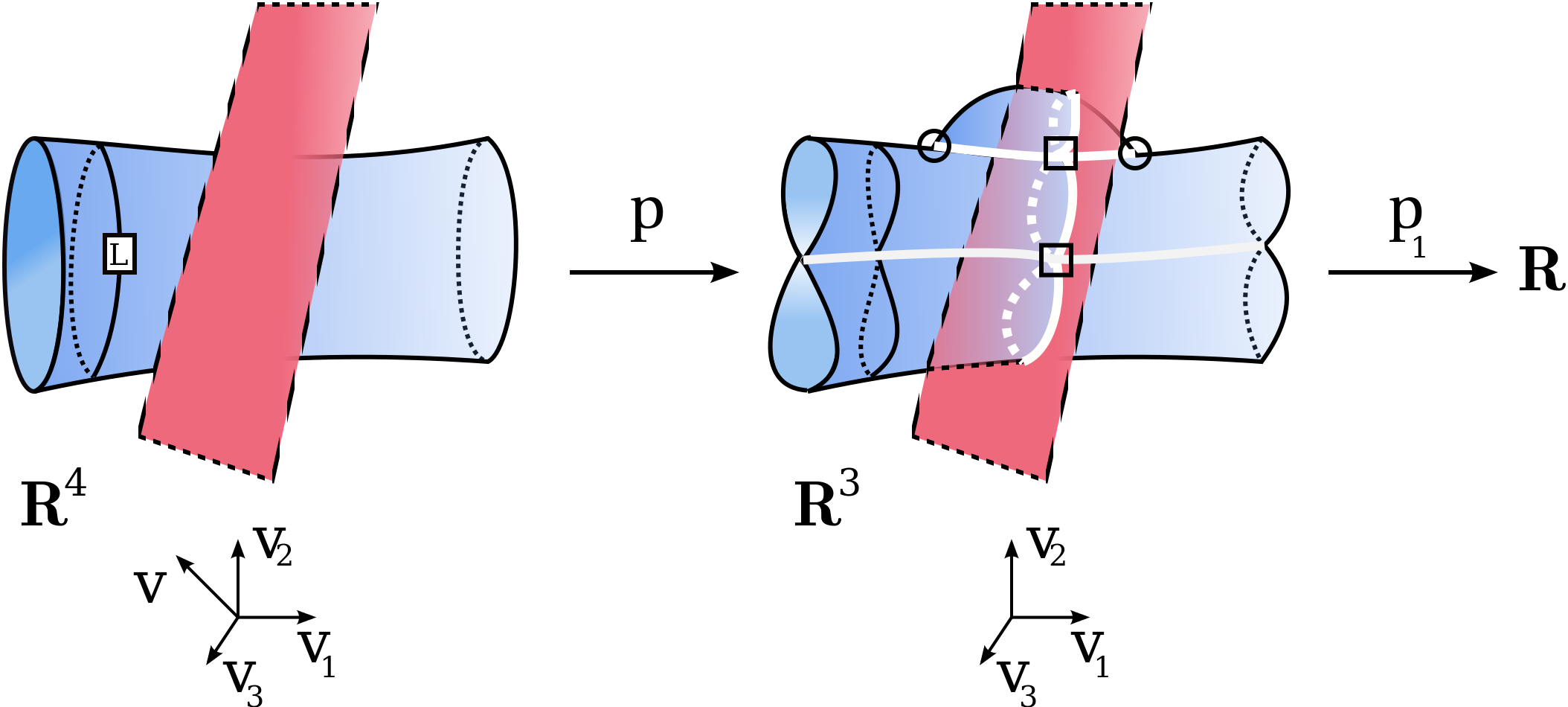}
    \caption[ \ \ Example generic projection and movie.]{Two subsets of a surface $K$ are tracked through a projection $p$ and Morse function $p_1$.  The $L$ appearing on the left indicates the knot type of that particular level set of $K$.  White lines represent the double point set $S_2$, the squares contain triple points and the circles contain branch points.}
    \label{fig:IntuitivePicture}
  \end{figure}

Let us pause for a moment  to give an intuitive picture of how to think about these constructions.  
Let $g\colon \R^4\to \R$ be the projection onto $v_1$ and for simplicity assume $g$ is a Morse function for the surface $F$.
Since  \cite{Fox62} people have studied surfaces in $\R^4$ using level sets.
Denote these level sets of $\R^4$ by $\R^3_t:=g\inv(t)$.  
For all but finitely many values of $t\in \R$, the intersection of the embedded surface $K$ with $\R_t^3$ will be empty or a link.  
In figure \ref{fig:IntuitivePicture} two subsets of a surface $K$ are shown on the left and their images under $p$ are shown on the right. 
The $L$ appearing in one of the subsets indicates the knot type of that particular level set, which in this example is the unknot (since $p(L)\cap S_2$ is a single point).

By glancing at the projection of the simple surfaces appearing in figure \ref{fig:IntuitivePicture} it is easy to see that knotted surface diagrams can get quite complicated, which makes it difficult to keep the relevant structure in the mind.
Since a movie (of a knotted surface diagram) is defined to be the level sets of the projection $p$ it might seem that  we are stuck using  knotted surface diagrams.
However, it turns out that much of the structure can be understood using the level sets of $g$ as long as a complementary coordinate system is chosen.
A complementary coordinate system allows one to  reverse the order of the projections: slice then project rather than project then slice.
For example, in the diagram on the left of figure \ref{fig:IntuitivePicture} consider the level set $\R^3_s$ that  has the link component $L$.  
With a complimentary coordinate system $p(L)=p_1\inv(s)$.
That is to say, the projection $p$ simply takes the link $L$ to a diagram which is the still of the movie.
Said another way, the projections of the level sets of $g$ are diagrams, which happen to be the stills (level sets of $p_1$) corresponding to the movie of $\K$.  Furthermore, since both $g$ and $p_1$ are defined by projection onto $v_1$ the indexing of the level sets is the same (\ie $p\comp g\inv (s)=p_1\inv(s)$).

\begin{defn}
  Two knotted surfaces $f_i\colon F\to \R^4, i=0,1$ are \emph{ambiently isotopic} if there is an isotopy $H\colon \R^4\times [0,1]\to \R^4 \st H(x,0)=x \all x\in \R^4$ and $H(f_0(a),1)=f_1(a) \all a\in F$.
\end{defn}

The main theorem of \cite{CS1993}  is the following. 
\begin{thm}
 (Carter and Saito 1993)
  Two knotted surface movies represent isotopic knottings if and only if they are related by a finite sequence of movie moves 1-7, 23a, 23b, 25-30 (figure 19) or interchanging the levels of distant critical points.
   \label{thm:SmoothMovieMoves}
\end{thm}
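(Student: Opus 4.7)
The plan is to establish the theorem by a Cerf-theoretic stratification argument, in the spirit of the classical proof of the Reidemeister theorem lifted one dimension higher. The \emph{only if} direction --- that each listed movie move arises from an ambient isotopy --- is handled by exhibiting, for each move, an explicit local isotopy of $\R^4$ whose induced movie realizes the transition, and by noting that interchanging the levels of distant critical points is realized by a small reparametrization in the direction of $v_1$. This amounts to a finite checking task, one move at a time.

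For the \emph{if} direction, suppose $K_0$ and $K_1$ are knotted surfaces connected by an ambient isotopy $H\colon \R^4\times [0,1]\to \R^4$, each equipped with a complementary coordinate system. Consider the space $\mathcal{P}$ of triples $(K,v,v_1)$ consisting of a smooth embedding $K\colon F\hookrightarrow \R^4$ together with a choice of complementary coordinate system. The subset $\mathcal{P}^{gen}\subset \mathcal{P}$ of triples for which $p$ is generic, each $p_1\comp f_r$ has only nondegenerate critical points, and all critical values of $p_1\comp f_r$ are distinct, is open and dense, and its complement is stratified by codimension. First I would combine the ambient isotopy $H$ with a path of complementary coordinate systems connecting the given choices at $t=0$ and $t=1$ to obtain a continuous path $\gamma\colon [0,1]\to \mathcal{P}$ between the two data. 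Next I would perturb $\gamma$ rel endpoints so that it meets only the codimension-$1$ strata of $\mathcal{P}\setminus \mathcal{P}^{gen}$, transversally and at finitely many parameter values.

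The heart of the proof --- and the main obstacle --- is then to enumerate the codimension-$1$ strata and match each to a listed move. These strata fall into three kinds: (i) codimension-$1$ failures of $p$ to be generic, which are the local transitions of the singular set of $p(K)$ in $\R^3$ and yield the Roseman-type movie moves among $1$--$7$; (ii) codimension-$1$ failures of $p_1\comp f_r$ to be Morse --- births, deaths, and handle-slide-type degenerations of critical points on the surface itself and on the double and triple decker manifolds, together with the appearance and disappearance of cusps and branch points --- which yield the remaining moves $23a$, $23b$, $25$--$30$; and (iii) two critical values of the functions $p_1\comp f_r$ coinciding, which is realized by the ``interchange of distant critical points'' clause of the theorem. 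For each codimension-$1$ stratum one writes down a local normal form, verifies its effect on the movie, and identifies it with exactly one move in the list.

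I expect the main difficulty to lie in (ii): one must verify that the enumeration is exhaustive by carefully analyzing how Morse-theoretic degenerations can occur on the non-manifold stratified set $\overline{S_2}$, where the usual Cerf catalogue must be extended to take account of the stratification by the decker maps $\tilde f_r$ and the interaction of critical points of $p_1\comp f_r$ with triple points and branch points. Once the correspondence is verified, concatenating the finitely many local transitions encountered along the perturbed $\gamma$ yields the required finite sequence of moves relating the movies of $K_0$ and $K_1$, completing the proof.
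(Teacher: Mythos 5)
The paper states this theorem as a result of Carter and Saito and simply cites \cite{CS1993}; it does not prove it, so there is no internal argument to compare your proposal against. Your sketch does capture, at the level of strategy, what Carter and Saito actually do: set up a parameter space of embeddings together with complementary coordinate data, identify the open dense generic locus, connect two generic points by a path, perturb it to be transverse to the codimension-one strata at finitely many times, and interpret each such crossing as a movie move or a level interchange. The \emph{only if} direction by local realization of each listed move is also correct and is the easier half.

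However, as you acknowledge yourself, essentially all of the mathematical content of the theorem is in the enumeration you defer in item (ii): establishing that the catalogue of codimension-one degenerations --- of the generic projection $p$, of the Morse data of $p_1 \comp f_r$ on the double and triple decker sets, of the branch and triple point loci, and of the interactions among them --- is both finite and exhausted precisely by the stated list of moves. Your proposal asserts that one ``writes down a local normal form'' for each stratum and ``verifies the enumeration is exhaustive'' without producing those normal forms or the exhaustiveness argument, so the heart of the proof is missing. Note also that item (i) compresses a substantial dependence: the classification of codimension-one failures of $p$ to be generic is Roseman's work, and Carter and Saito build on it rather than rederiving it; a complete proof would either reproduce or explicitly invoke those results. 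As written, your text is a correct and well-oriented proof outline, not a proof.
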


The smooth model movie moves 1-7, 23a, 23b, 25-30 (depicted in  figure 19)  can vary crossing information and can be interpreted as sequences starting at the top  or starting at the bottom.  
These moves also represent moves where each still has been reflected horizontally (or vertically or both).

When studying knots using diagrams, the Reidemeister moves are needed to recover ambient isotopy of knots.  
However, the full statement is that the diagrams of any two ambient isotopic knots are related by Reidemeister moves \emph{and} planar isotopies.
Correspondingly, the theorem above leaves the type of isotopy implied.
Two knotted surfaces represented by movies (of knotted surface diagrams) are isotopic if they are related by movie moves, exchanging critical points \emph{and} level preserving isotopies of $\R^3$ (\ie an isotopy through movies).  
This point is crucial to understanding how to derive a grid movie move theorem.

\subsection{The Second Movie Move Theorem.} 

Unfortunately,  the first movie move theorem does not give a combinatorial description of a knotted surface.
  Fortunately, Carter, Rieger and Saito in \cite{CRS1997} provided the needed combinatorial structure with their second movie move theorem.
We will explain these constructions  in the next subsection.  
Presently, we sketch a brief argument that shows why the  structure present in the first movie move theorem is not robust enough to represent smooth movies combinatorially,
which also happens to be one of the motivating factors for the second movie move theorem.

Suppose you are given a smoothly embedded surface in 4-space and then construct a movie of a knotted surface diagram that corresponds to this surface.  
It is a simple matter to  reconstruct a smooth surface that is isotopic to the original since you have the full set of stills from the movie of the knotted surface diagram (even though only a finite sub-collection of stills is used for depiction purposes).
The question we are currently framing is the following, ``Is there a general method of selecting a  finite sub-collection of these movie stills that will always allow one to  reconstruct a smooth surface that is isotopic to the original?  If so, how many stills, and which ones, need to be included in this sub-collection?''

One might initially postulate that only one still in between each of the type I, II, and III critical points is needed since all stills between two successive critical points of the Morse function are planar isotopic to one another.  
Carter and Saito showed that this  sub-collection of stills is not sufficient  to reconstruct a  surface isotopic to the original.  They did this using an example of a surface that had stills between two successive critical points  that, when viewed sequentially in the Morse time direction, looked like a trefoil rotating in a plane.  This example can be found in \cite{CRS1997} (section 3.4.1 and corresponding  figure 15).  
Briefly, it is possible for the link diagram to `rotate'  (a planar isotopy) in between successive critical points, as the time parameter of the Morse function changes, inducing a `twist' in the surface, which can alter the surface isotopy class.
The main point is, given a finite sub-collection of stills with only one still in between critical points of type I, II, and III one would not be guaranteed to reconstruct a surface in the same isotopy class as the original because there is ambiguity in how many times each link diagram is suppose to `twist' in between each successive pair of critical points.

In order to keep track of these `twists' Carter, Rieger and Saito in \cite{CRS1997} introduced  a second fixed height function.
We only briefly describe their constructions and point the interested reader to their paper for more details.
Specifically, given a knotted surface $K$ embedded in 4-space, they: project to an $\R^3$ hyperplane to get a knotted surface diagram ($p\colon \R^4\to \R^3$);  project onto a retinal plane $\pi\colon \R^3\to P$  (an affine plane $P$ that does not intersect the image $p(K)$); lastly they define a height function  $h\colon \R^2\to \R$.
\[\xymatrix{F\incl[r]^K& \R^4 \ar[r]^p& \R^3\ar[r]^\pi & P\ar[r]^h& \R}\] 
They then analyze the singularities, with respect to $h$, of the projection onto the retinal plane.
\begin{defn}
  The inverse images of the $t=a$ and $t=b>a$ levels differ by an elementary string interaction (ESI), with respect to the movie description with a still height function, if only one singular value of the height function $h$ occurs between $a$ and $b$.
\end{defn}
There are seven basic types of ESI's, which  are represented  by the diagrams in the top row and the two diagrams on the bottom left of figure \ref{fig:FESIs}.  Starting from the top left and reading to the right they correspond to: branch points; maximal or minimal critical points occuring on the interior of a double point arc;  isolated triple points (formed by three sheets); birth or death of an unknotted circle (Morse critical point of index 0 or 2); saddle critical point (Morse critical point of index 1); a \emph{cusp} or a \emph{switchback move}; \emph{camel-back move} or a $\psi$ move.

In order to obtain a combinatorial description of a  knotted surface  the `multi-local' moves, which account for exchanging the levels of distant critical points (depicted in the bottom right four diagrams in figure \ref{fig:FESIs}), must be included.

\begin{figure}[h]
  \centering
  \includegraphics[width=\textwidth]{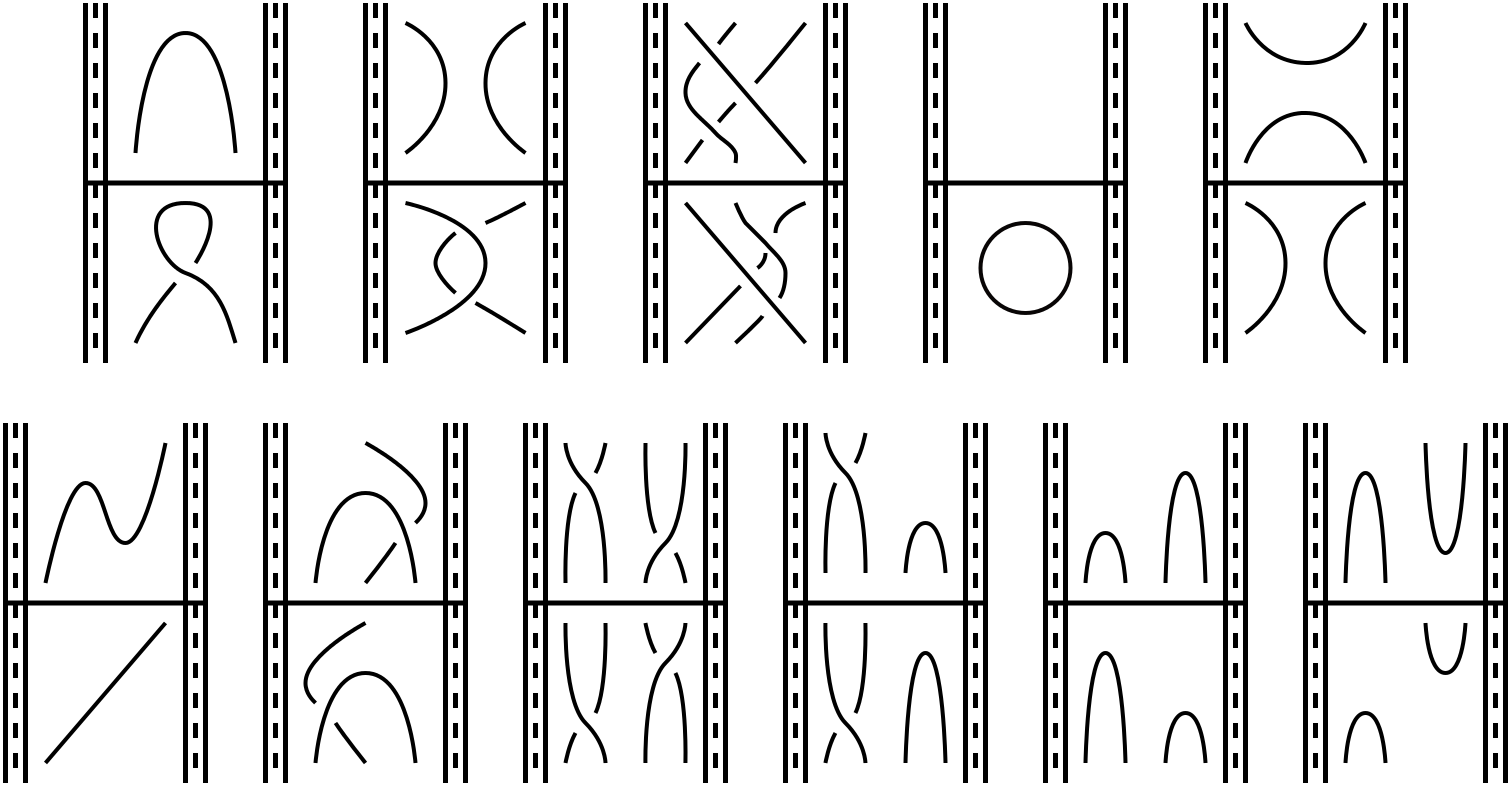}
  \caption{The full set of elementary string interactions (FESI's): the five movie move  ESI's in the top row;  the right four on the bottom row are referred to as multi-local moves.  The left two ESI's on the bottom row are the cusp and $\psi$- moves.}
  \label{fig:FESIs}
\end{figure}

Together the 11  elementary string interaction's in figure \ref{fig:FESIs} make up the full set of elementary string interactions (FESIs).  
Each FESI has several versions obtained by: reflecting each frame about the horizontal (or vertical) axis; changing between positive and negative crossings; orienting the strings differently; and reading the film strip from bottom to top rather than top to bottom.
Taking into account these symmetries, the FESIs represent the stills that need to be included in the finite sub-collection in order to reproduce a smooth surface of the same isotopy class as the original (see theorem 3.5.4 in \cite{CRS1997}).
More explicitly, to generate a sufficient sub-collection of stills to reconstruct a surface of the same isotopy class: using the time parameter, order the stills that contain either a  Morse critical point of type I, II, III or contain a singular point of the height function $h$; and then select a single still in between each of these critical points.  This  ensures that each subsequent still  in this sub-collection differs from the previous one by a FESI (and  possibly a level preserving isotopy of $\R^3$ that respects $h$).

One of the many results in \cite{CRS1997} is the following theorem.
\begin{thm}
\label{thm:CRS1997}
  (Carter, Rieger, Saito 1997) Two smooth movies with height functions on each still represent isotopic knotted surfaces if and only if one can be obtained by the other by a finite sequence of the 31 smooth movie moves with height functions fixed in each still.
\end{thm}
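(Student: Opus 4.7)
The plan is to deduce this refined theorem from the first movie move theorem (Theorem 1.1), reading off the extra moves as obstructions to transporting the ambient isotopy across a fixed second height function $h$ on each still. The forward direction is a local check: each of the 31 moves is explicitly realized by an ambient isotopy of $K\subset \R^4$ that preserves both $p$ and $p_1$, so each move sends a movie to the movie of an isotopic knotted surface. This amounts to drawing the corresponding local surface patch in a $4$-ball and verifying the FESI bookkeeping; it is routine, so the content lies entirely in the reverse implication.

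For the reverse direction, start with two movies $M_0, M_1$ with fixed still height functions representing ambiently isotopic surfaces. Theorem 1.1 produces a finite sequence of (i) the original 15 movie moves, (ii) interchanges of distant critical points of $p_1$, and (iii) level-preserving isotopies of $\R^3$ relating the underlying knotted surface diagrams. The strategy is to factor each of these three kinds of steps into the 31 refined moves. For (i), each unrefined move admits several refinements depending on how the critical points of $h$ are positioned on the strands participating in the move; after classifying these positions up to local isotopy compatible with $h$, one recovers the refined movie-move portion of the list. For (ii), the Morse-theoretic interchanges of critical points of $p_1$, once one also tracks the graph of $h$, split into the multi-local moves in the bottom right of Figure \ref{fig:FESIs}.

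Step (iii) is the technical heart. Given a smooth path $\phi_t\colon \R^3\to\R^3$ of level-preserving isotopies, lift it to a one-parameter family $\K_t$ of knotted surface diagrams, and examine the induced family of still pictures together with the fixed height function $h$. The projection onto the retinal plane $P$ composed with $h$ gives, on each double-point curve and each sheet, a one-parameter family of smooth functions on curves in $P$. A Cerf-theoretic transversality argument, applied separately to the regular sheets, the double-point arcs, and the triple points, shows that after a small perturbation of the path only finitely many parameter values $t$ exhibit codimension-one degeneracies of $h$; the local models of these degeneracies are precisely the cusp (switchback) and camel-back ($\psi$) ESIs of Figure \ref{fig:FESIs}, together with exchanges of distant critical points of $h$. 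Inserting the resulting finite sequence of FESIs along the path converts the level-preserving isotopy into a finite word in the refined moves.

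The main obstacle is this Cerf-theoretic enumeration in step (iii): one must show that generically no other codimension-one singularity of $h$ arises on curves in $P$ that are themselves undergoing isotopy, and that higher-codimension events can be avoided by transversality, so that cusp and $\psi$ are the complete list of new ESIs beyond Morse events. Once this local classification is in hand, combining (i)--(iii) expresses the original Carter--Saito sequence as a sequence of the 31 refined moves, and counting the orientation, crossing-sign, and reflection symmetries of each local model recovers the precise number 31 stated in the theorem.
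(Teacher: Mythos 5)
This theorem is stated in the paper as a cited result of Carter, Rieger, and Saito \cite{CRS1997} (their Theorem~3.5.4); the paper offers no proof of it and uses it purely as an input when proving Theorem~\ref{thm:GridMovieMoves}, so there is no paper-internal argument against which to check your sketch. Treated as a reconstruction of the CRS argument, your outline has the right flavor --- reduce to the first Carter--Saito movie theorem and then use transversality to enumerate what the still height function $h$ contributes --- but be aware that CRS do not literally factor the output of Theorem~\ref{thm:SmoothMovieMoves} through your three streams (i)--(iii). They work directly with generic one-parameter families of the composite projection onto the retinal plane followed by $h$, classifying codimension-one singularities on the regular sheets, the double decker set, and the triple points simultaneously via multi-jet transversality, and reading off all of the refined moves (including the cusp, camel-back, and multi-local FESIs) from that single classification rather than treating level-preserving isotopies as a separate technical heart. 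Your closing remark about recovering the number $31$ by counting orientation, crossing-sign, and reflection symmetries is also off: those symmetric variants are explicitly \emph{not} counted separately (the paper notes this just after Theorem~\ref{thm:SmoothMovieMoves}), so $31$ comes from the number of distinct local models, not from a symmetry enumeration.
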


In this paper we prove the following grid diagram analogue of the above theorem.
\begin{thm}
\label{thm:GridMovieMoves}
  Two grid movies represent isotopic knotted surfaces if and only if one can be obtained from  the other by a finite sequence of grid movie moves.
\end{thm}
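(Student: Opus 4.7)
The plan is to prove Theorem \ref{thm:GridMovieMoves} by reducing it to Carter, Rieger and Saito's smooth theorem (Theorem \ref{thm:CRS1997}) through a dictionary between grid movies and smooth movies with height functions fixed in each still. The dictionary is to be built by extending, still-by-still, the planar grid algorithm referenced in the abstract: given a smooth movie, I replace each still by its associated grid diagram and record the transitions between consecutive grid stills as grid movie moves; conversely, given a grid movie, each grid still determines a knot diagram drawn with horizontal and vertical edges, and the transitions between consecutive grid stills smooth out either to FESI's or to level-preserving isotopies of $\R^3$ respecting the height function.

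The ``if'' direction then proceeds move-by-move. For each grid movie move one needs to verify that the associated smooth movie, obtained via the dictionary, is either one of the 31 smooth movie moves, a multi-local move, or a level-preserving isotopy; in each case the ambient isotopy class of the represented knotted surface is preserved, so any finite sequence of grid movie moves yields a finite sequence of smooth operations that do not change the knotted surface up to ambient isotopy.

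For the ``only if'' direction, suppose two grid movies represent isotopic knotted surfaces. Passing through the dictionary gives two smooth movies that also represent isotopic knotted surfaces, so by Theorem \ref{thm:CRS1997} they are connected by a finite sequence of the 31 smooth movie moves together with level-preserving isotopies. My task is to lift each step in this smooth sequence to a finite sequence of grid movie moves. For the 31 movie moves, this is a finite case analysis: for each move I would construct a local grid model and exhibit the explicit grid movie moves that realize it, accounting for the standard symmetries (reflection, orientation reversal, crossing sign changes, and reading top-to-bottom versus bottom-to-top).

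The hard part will be lifting the level-preserving smooth isotopies between consecutive stills, since smooth planar isotopy is a continuous process while grid planar isotopy consists of a finite list of discrete moves (a restricted subset of stabilizations, destabilizations and commutations). This is precisely what the 1--1 correspondence between grid planar isotopy classes and smooth planar isotopy classes, established earlier in the paper via the new planar grid algorithm, is designed to deliver: any smooth planar isotopy between two stills that are both images of grid diagrams can be realized by a finite sequence of grid planar isotopy moves. Concatenating these local lifts across the transition moves together with the planar isotopies between them then yields the desired finite sequence of grid movie moves, completing the proof.
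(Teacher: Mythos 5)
Your proposal is correct and follows essentially the same strategy as the paper's proof: translate between grid movies and smooth movies with a height function in each still via the dictionary constructed in theorem \ref{thm:MovieRepresentation}, reduce to Carter--Rieger--Saito's movie move theorem, and lift level-preserving isotopies between stills to grid movie isotopies using the one-to-one correspondence of theorem \ref{thm:IsotopyClasses}. A minor note: the paper's proof nominally cites theorem \ref{thm:SmoothMovieMoves} while invoking height-function-respecting isotopies, whereas you correctly point to theorem \ref{thm:CRS1997} as the version actually needed for that part of the argument.
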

These moves are depicted in figure \ref{fig:GridMovieMoves}.

In order to  generalize this movie move theorem to a grid diagram analogue a few things need to be understood: (1) how to gridify a movie;   (2) what a grid planar isotopy is (so that we may define a level preserving grid isotopy of a grid movie); and (3) how to represent a movie (of a knotted surface diagram), up to isotopy, by a finite collection of classical knot diagrams.

The second movie move theorem clearly deals with the third point.  The main idea to deal with the first point is to gridify  each   still, while preserving the planar isotopy class, in a combinatorial description of a knotted surface.

Both Cromwell \cite{Cromwell1995} and Dynnikov \cite{Dynnikov2006} have provided algorithms that take a smooth knot diagram to a grid diagram.
In addition they have shown
\begin{thm}
\label{thm:CromwellDynnikov}
  (Cromwell (1995), Dynnikov (2006)) Any two grid diagram presentations of a link $L\subset \S^3$ are related by a finite sequence of commutations, stabilizations and destabilizations.
\end{thm}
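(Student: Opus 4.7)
The plan is to reduce the statement to the classical Reidemeister theorem for planar link diagrams. To each grid diagram $G$ I associate its link diagram $D(G)$, obtained by drawing horizontal segments connecting the $O$ to the $X$ in each row, vertical segments connecting the $X$ to the $O$ in each column, and declaring the vertical segments to over-cross the horizontal ones at each intersection. If $G_1$ and $G_2$ both represent the link $L\subset S^3$, then $D(G_1)$ and $D(G_2)$ are diagrams of isotopic links and, by the Reidemeister theorem, are related by a finite sequence of Reidemeister moves $R1$, $R2$, $R3$ together with planar isotopies. The core of the argument is to show that each such move on the ambient diagram lifts to a finite sequence of commutations, stabilizations and destabilizations on the underlying grid.

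I would proceed in three steps. First, handle the kink of an $R1$ move: creating or removing a kink at an extreme corner of the grid is literally the effect of a stabilization / destabilization, so $R1$ is immediate. Second, handle $R2$ and $R3$: by applying stabilizations in the rows and columns adjacent to the region where the move is to be performed, one may refine the grid until the local Reidemeister move becomes expressible as a specific, finite sequence of row/column commutations together with a bounded number of (de)stabilizations; this is essentially the content of Cromwell's arc presentation argument applied to the grid picture. Third, handle planar isotopy of $D(G)$: after sufficient stabilization, an ambient planar isotopy between two gridded diagrams decomposes into a sequence of row-past-row and column-past-column slides, each of which is precisely a commutation, possibly after interpolating stabilizations to create the empty row/column that the slide requires.

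The main obstacle is the second step. Unlike $R1$, there is no uniform bound on the number of stabilizations required to expose an $R2$ or $R3$ move as a finite commutation sequence, and one must argue by carefully subdividing the support of the move, tracking where the $X$'s and $O$'s land in each intermediate diagram, and checking at every step that the grid axioms (a single $X$ and a single $O$ in each row and column, with the correct over/under convention) are preserved. A cleaner alternative, which I would adopt if the direct combinatorial verification becomes unwieldy, is to identify grid diagrams with Cromwell's arc presentations by regarding the columns (respectively rows) of $G$ as half-disk pages of an open-book about a vertical (respectively horizontal) axis; commutations correspond to exchanging adjacent non-interleaved pages and (de)stabilizations correspond to Cromwell's stabilization. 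Cromwell's theorem for arc presentations and Dynnikov's subsequent refinement then deliver exactly the list of moves in the statement, completing the proof.
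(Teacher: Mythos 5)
The paper does not prove this theorem; it states it as a known result, citing Cromwell and Dynnikov, and in Section~2 merely remarks that ``Cromwell used Markov moves and Dynnikov used Reidemeister moves.'' There is therefore no paper proof to compare against, only a citation.

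As a blind reconstruction, your outline correctly identifies the Dynnikov-style strategy (pass to the associated link diagram, invoke the Reidemeister theorem, and lift each move back to grid moves) and correctly flags that the hard work lies in the lifting. But as written there are real gaps. First, the lifting is not a move-by-move translation: the intermediate diagrams produced by a Reidemeister move or a planar isotopy are not grid diagrams, so the argument must instead show that \emph{two grid diagrams whose associated link diagrams differ by a single Reidemeister move or planar isotopy} are connected by commutations and (de)stabilizations; this requires first establishing a well-definedness lemma for a gridding procedure (indeed, that is essentially what the present paper has to prove from scratch in Section~3 for the stronger planar-isotopy statement, and it is not short). Second, your treatment of planar isotopy is asserted rather than argued: the claim that a planar isotopy ``decomposes into a sequence of row-past-row and column-past-column slides, each of which is precisely a commutation, possibly after interpolating stabilizations'' is precisely the content that needs a careful proof, and it is not obvious (for instance, isotoping a strand around a corner cannot be achieved by commutations and kink (de)stabilizations alone --- this paper needs to introduce transfer moves for exactly this reason, even though the weaker statement here allows full $R2$ so the obstruction is milder). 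Third, your ``cleaner alternative'' via arc presentations is not an independent proof: after setting up the grid-diagram/arc-presentation dictionary, invoking ``Cromwell's theorem for arc presentations'' is invoking the very statement being proved, translated into different language. In short, the proposal names the right strategy and the right references, but it does not supply the argument at the points where the argument is actually needed.
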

We will define these three moves in the next section.
At present, note that this theorem characterizes grid presentations of links up to ambient isotopy of $\R^3$ or $S^3$.
However, it does not characterize grid presentations of links up to planar isotopy, which is needed for the problem at hand.
Although we can not directly use Cromwell and Dynnikov's theorem, it will help us define a correct notion of grid planar isotopy.

The rest of this paper is organized as follows.  
We give a definition of grid planar isotopy.  
We show that grid planar isotopy coincides with the smooth definition.  
That is, the set of equivalence classes of smoothly planar isotopic diagrams is in one to one correspondence with the set of equivalence classes of grid planar isotopic diagrams.  
In order to show this, we  present a different  planar grid algorithm that takes a smooth link and returns a grid link.
We then define grid movies and grid movie moves.
Following this we describe how to obtain a grid movie from a smooth movie with a fixed height function in each still.  In order to do this we will slightly modify our grid planar algorithm to account for a fixed height function.  We will then prove theorem \ref{thm:GridMovieMoves}.   
Finally, since we will be concerned with surfaces with boundary components, we generalize the definitions and constructions to handle this more general setting.

\subsection{Acknowledgments} I thank my advisor Daniel Ruberman for  his patience and mentoring.  I thank  Sucharit Sarkar  for suggesting the  knot Floer homology problem that inspired the current paper. Additionally, I  thank  Adam Levine for helpful conversations.
\section{Grid Planar Isotopy}
\label{sec:GridPlanarIsotopy}

As mentioned in the introduction, both Cromwell \cite{Cromwell1995} and Dynnikov \cite{Dynnikov2006} have given independent proofs of  theorem \ref{thm:CromwellDynnikov}.  Cromwell used Markov moves and Dynnikov used Reidemeister moves.

Theorem \ref{thm:CromwellDynnikov} shows that commutations, stabilizations and destabilizations  of a grid diagram (which will be defined shortly) encode both planar isotopy information and the Reidemeister moves.  
Therefore, since we seek a grid equivalent of planar isotopy, we will remove the types of commutations, stabilizations and destabilizations that encode Reidemeister moves.  
It turns out that we do need to allow a specific combination of two Reidemeister II moves, which we call a transfer move, to fully capture the right notion of grid planar isotopy.  
We will deal with stabilizations and destabilizations together,  enumerate the types of commutations that can occur and then discuss transfer moves.

First, since a grid diagram has torus identifications we will choose a fundamental domain by specifying a vertical arc and horizontal arc, which we label $\a_0$ and $\beta_0$ correspondingly.
The intersection point $\a_0\cap \beta_0$ will always be in the lower left corner of our diagrams.
We index the other vertical arcs sequentially left to right and horizontal arcs from bottom to top.
Columns are  indicated by the left vertical arc of the column and rows are indicated by the lower horizontal arc.
For a grid diagram of index $n$ there are  columns and rows indexed by $0,1, \ldots, n-1$.

A \emph{stabilization} of a grid diagram takes a grid of index $n$ to a grid of index $n+1$ by the following procedure: choose a row (column) and  split it into two adjacent rows (columns); add a new column (row) in the grid diagram; place an $X$ and an $O$ adjacent to one another such that they are in the two modified rows (columns)  and the new column (row).  
A \emph{destabilization} is the opposite of this procedure: delete an $O$ and $X$ that are in adjacent rows (columns); delete the horizontal (vertical) arc that separate them; delete one of the vertical (horizontal) arcs on either side; contract the two rows and two columns.
Examples of stabilizations and destabilizations are shown in figure \ref{fig:StabilizationTypes}. 

The stabilizations and destabilizations either add another crossing (Reidemeister I move) or they don't, in which case they just add a kink. See figure \ref{fig:StabilizationTypes} for examples.  
Define a de/stabilization move to be a \emph{kink de/stabilization} if it does not add an intersection point and define it to be an \emph{R1 de/stabilization} otherwise see figure \ref{fig:StabilizationTypes}.

\begin{figure}[h]
  \centering
 \includegraphics[width=.6\textwidth]{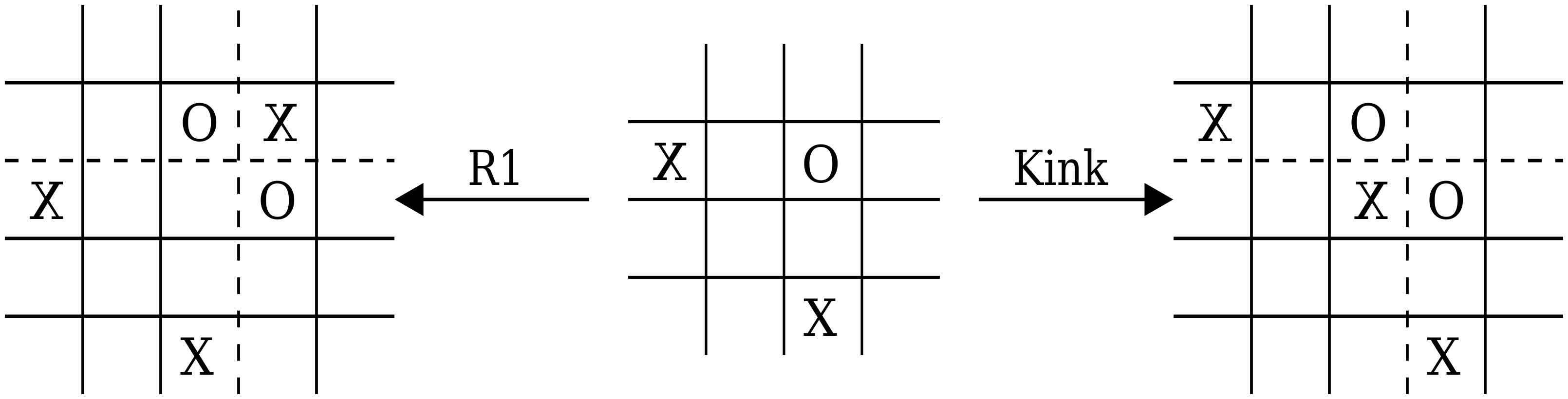}
  \caption[ \ \ R1 and kink de/stabilizations]{An R1-stabilization (left) and a kink stabilization (right). The dotted lines are the lines added to a grid diagram.  Inverting this process yields examples of the two types of destabilizations.
 }
  \label{fig:StabilizationTypes}
\end{figure}

A \emph{commutation} move is an interchange of two adjacent rows (or columns).  
Not all interchanges are allowed.  
In order to determine if a commutation can occur: label the $X$'s and $O$'s by $X_1, O_1, X_2,O_2$ where $X_i,O_i$ are in the same column or row; each of these has row and column coordinates; for a commutation of two columns (rows) order these by the row (column) coordinate; if two share the same row (column) number or if neither $X_1,O_1$ nor $X_2,O_2$ are sequential in this ordering the commutation is obstructed.
An \emph{$\a_i$ commutation} is a commutation between two rows that share the horizontal arc $\a_j$.
Similarly, a \emph{$\beta_j$ commutation} is a commutation between two columns that share the vertical arc $\beta_i$.

There are two types of commutations that can occur:  \emph{un-nested} where both pairs $X_i,O_i$ are sequential; and \emph{nested} where exactly one pair of $X_i,O_i$ are sequential.
Even though $\a_0$ and $\beta_0$ commutations are defined we will not use them since pulling arcs around the back  can represent Reidemeister I moves (see the top left diagram in  figure \ref{fig:CommutationTypes}).
An un-nested commutation is always a grid planar isotopy.  
A nested commutation can represent three things: ($R2$)  a  Reidemeister II move that changes the number of intersections by $\pm 2$; (R3)  Reidemeister III move(s)  in which the separated pair of points (representing an arc of the knot) moves an intersection point  over another intersection point; and ($P$) a grid planar isotopy move  in which the separated pair of points moves an intersection along an arc of the knot.  
See figure \ref{fig:CommutationTypes} for examples.  
A word of caution: many times, when dealing with grid diagrams, the knot is not drawn, which means care is needed when trying to do a grid planar isotopy since the only difference between the  $R3$ move  and a $P$ move  is the existence of the horizontal arc separating both $X,O$ pairs.  
Furthermore, the only difference between an $R2$ move and a $P$ move is the location of one $O$ (or $X$).    Compare the $R2$ and $R3$ moves against the $P$ move in figure \ref{fig:CommutationTypes}.

\begin{figure}[h]
  \centering
  \includegraphics[width=\textwidth]{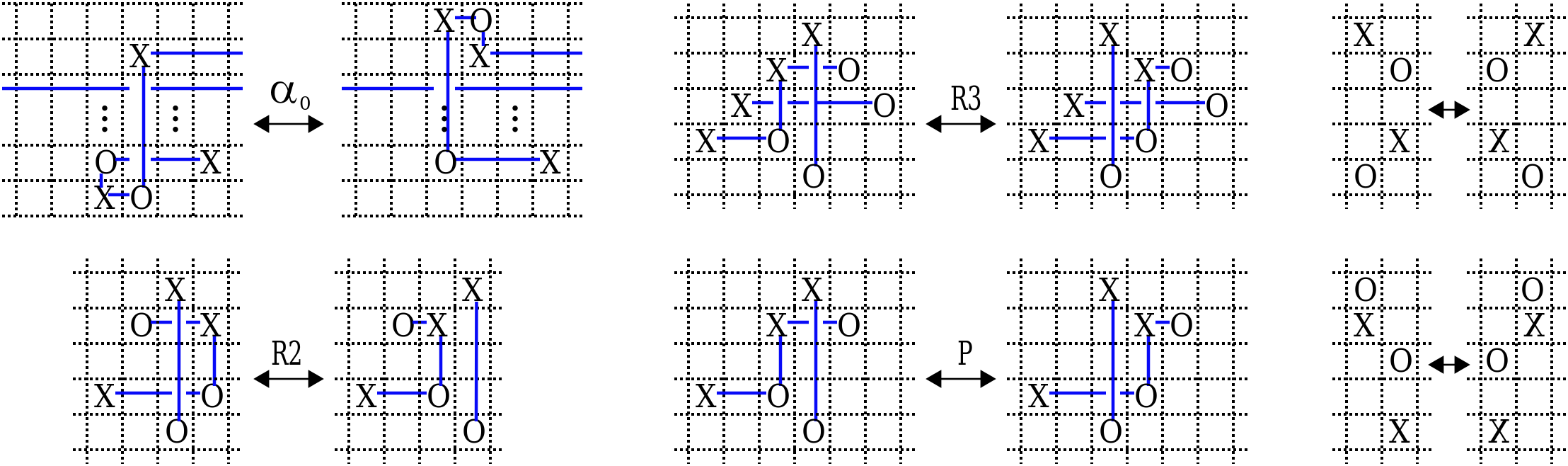}
  \caption[ \ \ Grid Reidemeister moves]{Clockwise from top left: an $\a_0$ commutation that represents an $R1$ move, $R3$ move,  nested pair of arcs,  un-nested pair of arcs,   a $P$ move and an $R2$ move.}
  \label{fig:CommutationTypes}
\end{figure}

Suppose that we defined a grid planar isotopy to be a composition of kink destabilizations, kink stabilizations, un-nested commutations and $P$ moves that are neither $\a_0$ nor $\beta_0$ commutations (all the non-Reidemeister moves).  Then, unfortunately, there would be no grid planar isotopy representing the process of smoothly isotoping an intersection point around a bend in an arc (see figure \ref{fig:TransferMove}).  Notice that the arc that is being isotoped changes its direction in the plane by 18$0^o$.  This together with the fact that commutations and kink de/stabilizations will never reverse the orientations of arcs  means that we cannot represent all planar isotopies with the non-Reidemeister grid moves.  To fix this issue we need to add one more move which is a very specific composition of two Reidemeister II moves called a transfer move depicted in figure \ref{fig:TransferMove}.
\begin{figure}[h]
  \centering
  \includegraphics[width=\textwidth]{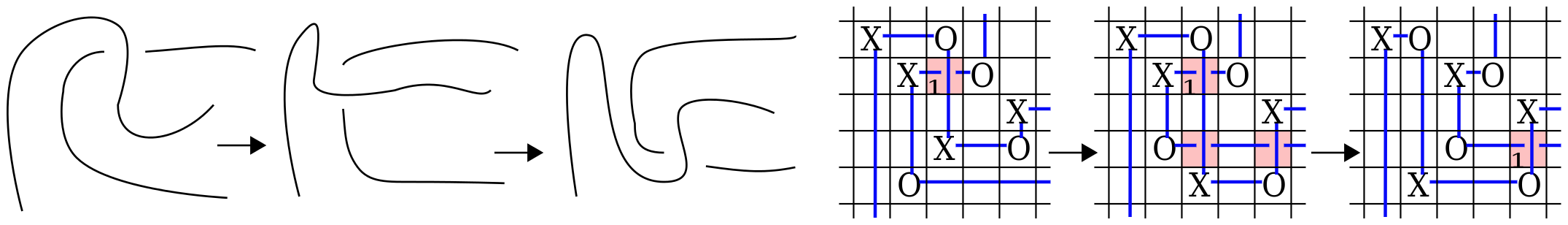}
  \caption[ \ \ Isotoping around corners]{Left: an isotopy that moves the double point smoothly around a corner.  Right: the corresponding transfer move applied to a grid diagram representing the initial smooth diagram.}
  \label{fig:TransferMove}
\end{figure}
\begin{defn}
  A \emph{transfer move} is a composition of two $R2$ moves, one horizontal and one vertical, between two arcs involved in an intersection point $p$.  The first $R2$ move creates two new intersection points of opposite sign between these two arcs.  The second $R2$ move removes $p$ and the newly created intersection point with sign opposite of $p$.
\end{defn}
In more detail, the first $R1$ move either pulls the top strand over the bottom strand or the bottom strand under the top strand, either horizontally or vertically, creating two more intersection points with opposite sign. The second commutation removes the original intersection point and the intersection with the opposite sign as the original, leaving only one of the new intersection points.  If the initial intersection point had an $X$ above and an $O$ below it then the final intersection point will have an $O$ above and an $X$ below (and vice versa).

\begin{defn}
  A \emph{grid planar isotopy} is a finite composition of  kink (de)stabilizations, transfer moves, un-nested commutations, and $P$ moves that are neither $\a_0$ nor $\beta_0$ commutations.
\end{defn}

\section{Planar Grid Algorithm}

The goal of this section is to show that our definition of grid planar isotopy coincides with smooth planar isotopy.  
Specifically,  we present an algorithm for transforming a planar diagram of a knot into a grid diagram and show that it is well defined up to grid planar isotopy.  Then, using the structure that the planar grid algorithm provides, we show that smooth planar isotopy classes are in one to one correspondence with  grid planar isotopy classes.

We begin with some definitions needed for the algorithm and proofs within this section.

\begin{defn}
  A \emph{grid arc} is a closed connected segment of a grid diagram of a knot consisting solely of vertical and horizontal line segments meeting at right angles that  connect two points.
\end{defn}

\begin{defn}
  An \emph{arcword} of a grid arc is some ordering of $L$'s and $R$'s or possibly the single letter $I$.  
Each  $L$ and $R$ represents a left turn and right turn respectively.  
To create an arcword, start at one of the endpoints of the grid arc and travel along the grid  arc until the other end point is reached notating each turn that is passed through.  
If no turns are passed then the arcword is $I$.  
\end{defn}
  In this terminology a kink is either a $LR$ or $RL$ combination.  
Many times $LR$ and $RL$ combinations can be removed or added by a destabilization and stabilization respectively.  

  \begin{defn}
    A \emph{reduced arcword} is an arcword where all possible $LR$ and $RL$ strings have been removed by a grid planar destabilization.  If there are no $L$'s or $R$'s remaining represent this by the arcword $I$.
  \end{defn}
  \begin{defn}
    A  \emph{net turn} is an arcword where all $LR,RL, LLLL,$ and $ RRRR$ terms are algebraically set to the identity $I$.  
  \end{defn}
  For example, the section of the two grid arcs in figure \ref{fig:ArcGridRepresentation} (assuming that the orientation is left to right) would induce a word $I$,  and the word $RLLRRL$ representing the six turns with direction.  
\begin{figure}[h]
  \centering
  \includegraphics[width=.8\textwidth]{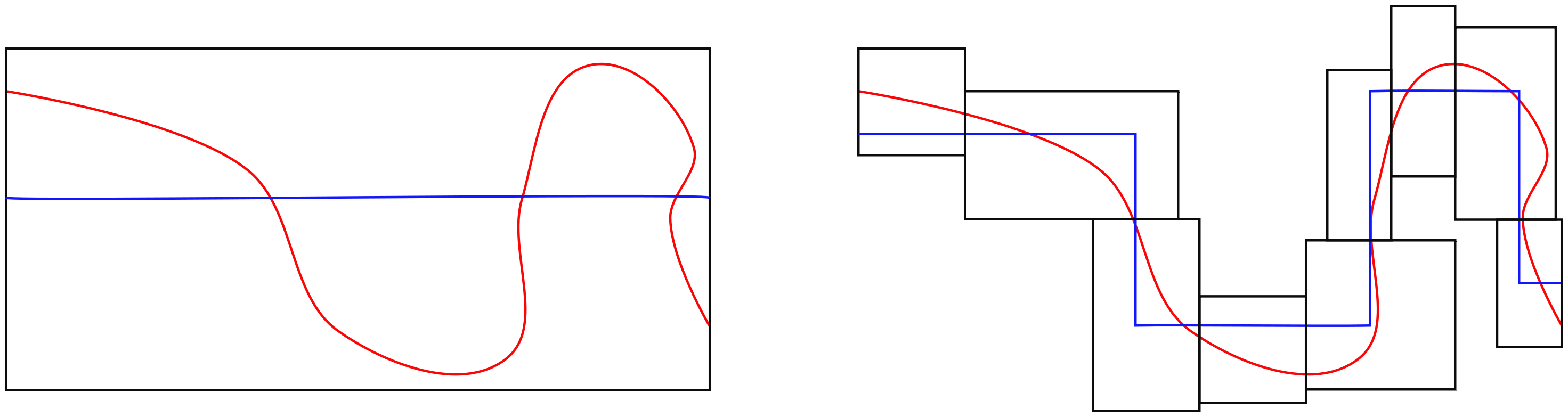}
  \caption[ \ \ Representing grid arcs] {Two possible rectangle choices for the original smooth arc.}
  \label{fig:ArcGridRepresentation}
\end{figure}
Note that both of these arcs have the same reduced arcword of $I$ (and therefore the same net zero turn).  This is true in general: any two sets of rectangles chosen for the same arc will have the same reduced arcword.

\begin{defn}
  An \emph{intersection parallelogram}, for a knot diagram $D$, is a parallelogram $P$ with horizontal sides such that: $P$ contains a single double point and exactly two arcs in its interior; one arc intersects each of the  horizontal sides exactly once, call this the vertical arc; the other arc intersects each of the other two parallel sides exactly once, call this the horizontal arc; the arcs do not intersect the corners of $P$.
\end{defn}

\begin{defn}
  An \emph{intersection square} $R$ is a square that contains a single double point and exactly two arcs in its interior,   an intersection parallelogram $P$, two trapezoids $T_L, T_R$ and four rectangles according to the following rules (see figure \ref{fig:IntersectionSquares} for examples):
  \begin{enumerate}
    \item $\partial R\cap \partial P=\nullset$
    \item The trapezoids $T_L, T_R$ each have at least two right angles; $T_L$ ($T_R$) has a side that is contained in the left (right) non-horizontal side of $P$; this side contains a single intersection point with the horizontal arc contained in $P$.
    \item Each of the four rectangles shares a different corner of $R$.
    \item  The boundary of one rectangle intersects the upper horizontal side of $P$ and the boundary of another rectangle intersects the lower horizontal side of $P$.
    \item The boundary of the other two rectangles intersect  a side of one of the trapezoids.
    \item The boundary of each trapezoid (except for $P$) has exactly two intersection points with a single arc.
  \end{enumerate}
\end{defn}
    \begin{figure}[h]
      \centering
      \includegraphics[width=\textwidth]{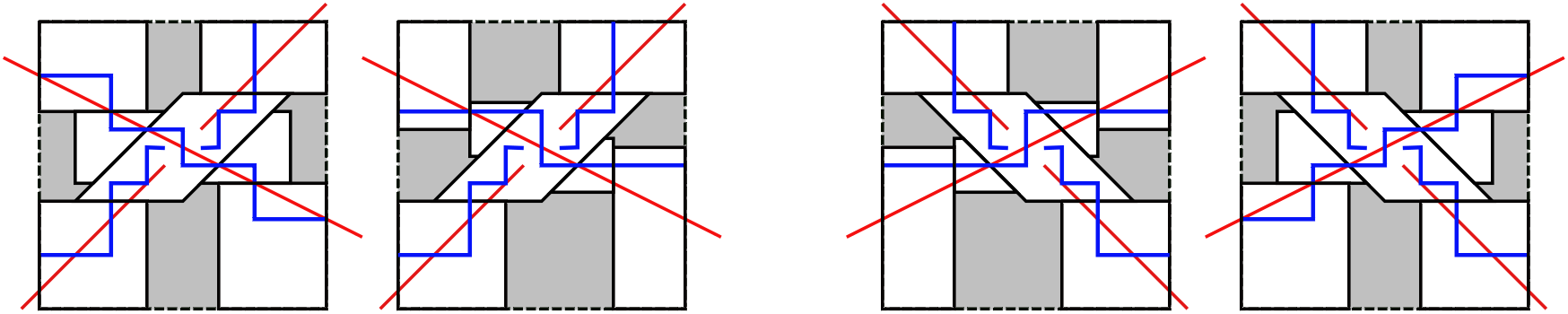}
      \caption[ \ \ Example intersection squares]{Four possible choices of parallelogram and trapezoids involved in an intersection square.  The shaded regions are exterior to all drawn shapes, straight lines are the original arcs, and the arcs with $90^o$ turns  are the grid arcs.}
      \label{fig:IntersectionSquares}
    \end{figure}

\begin{defn}
  The \emph{planar grid algorithm} applied to a diagram $D$ associated to a projection $\pi$ of knot $K$ is enumerated below.
\end{defn}

\begin{enumerate}
  \item Draw an intersection square around each double point.
    \begin{figure}[h]
      \centering
      \includegraphics[width=\textwidth]{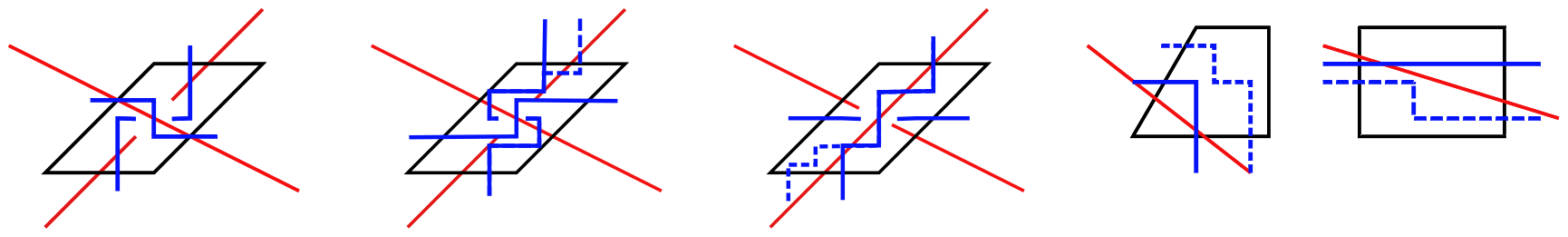}
      \caption[ \ \ Smooth arcs to grid arcs]{Three choices for replacing the original smooth arcs, represented as straight lines,  with grid arcs for an intersection point.  The two choices of travel through the intersection point when the horizontal arc is also the overarc are shown in the left two diagrams.  Dotted lines are other possible grid choices.}
      \label{fig:GridArcReplacement}
    \end{figure}

  \item Number the intersection squares and letter all of the arcs that connect these squares together.  The lettered arcs will have endpoints lying in the boundary of intersection squares and will be disjoint from the interior of all intersection squares.

    \begin{figure}[h]
      \centering
      \includegraphics[width=\textwidth]{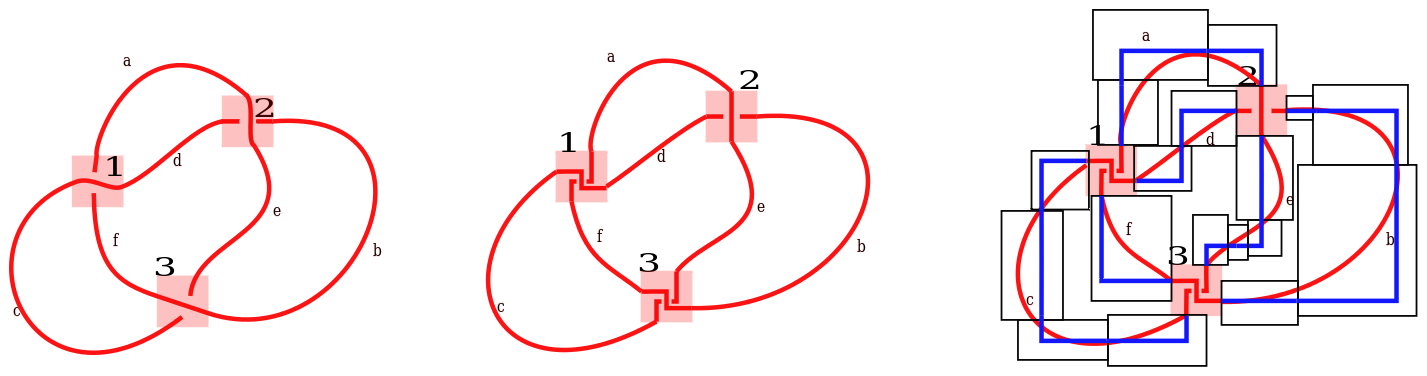}
     
      \includegraphics[width=.65\textwidth]{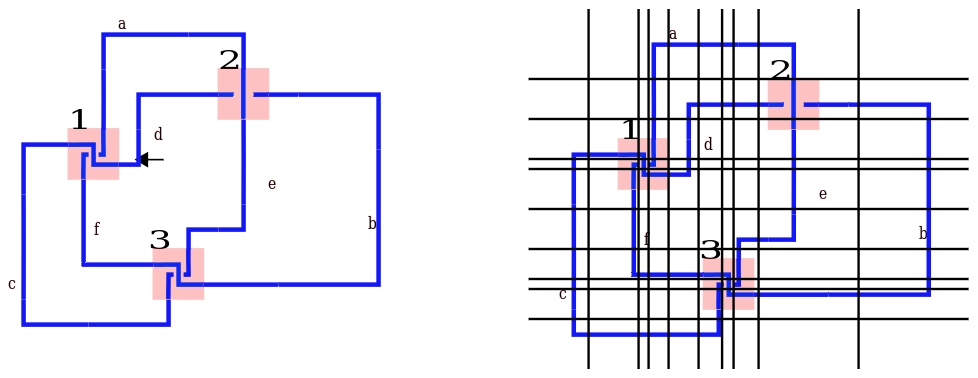}\ \ \ \ \ \ \ \ \ \ \ \includegraphics[width=.25\textwidth]{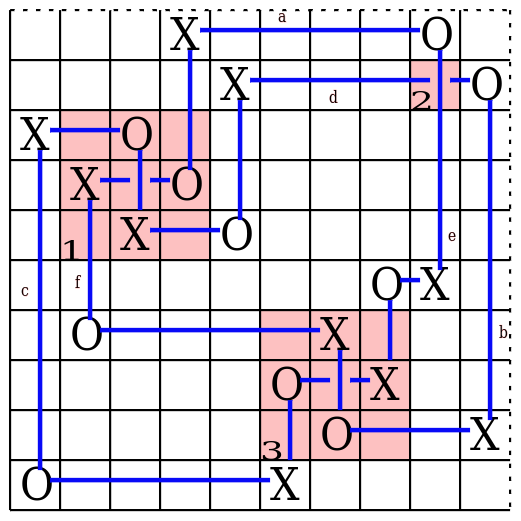}
      \caption[ \ \ Planar grid algorithm applied to trefoil]{The algorithm applied to the trefoil.  The stages are as follows: top left and middle diagrams, steps 1, 2 and 3;  top right, step 4 and 5; bottom left, step 6 (the arrow indicates the vertical line was moved to satisfy non-colinear condition in step 4); bottom middle, step 7; bottom right,  steps 8 and 9.}
      \label{fig:PlanarGridAlgorithm}
    \end{figure}

  \item Approximate each lettered arc by drawing rectangles, with vertical and horizontal sides, such that the arc intersects exactly two distinct edges transversally, each exactly once.
  \item Replace the arcs in the quadrilaterals by grid arcs.  If the quadrilateral is an intersection parallelogram then replace the arcs as shown in figure \ref{fig:GridArcReplacement}.  For trapezoids without double points replace the grid arcs with:
    \begin{enumerate}
    \item a grid arc that has net turn $0^o$ if the arc intersects two edges that are not connected by a vertex;
    \item a grid arc that has  net turn of $\pm90^o$ if the arc intersects two edges that are connected by a vertex.
    \end{enumerate}
  \item Delete all the rectangles used for approximation and the original smooth diagram.
  \item   Make sure that no two straight grid arcs (in the whole diagram) are collinear.  Make small translations of lines as needed to achieve this.
  \item Choose a  minimal set of horizontal and vertical lines that separate each horizontal and vertical grid arc from all others.
  \item Make these separating lines uniformly spaced and starting at one corner place an $X$ then choose a direction to travel along the arc and alternately label the corners $X$ then $O$.  Continue until all corners are labeled.  If the diagram is of a link this process will need to be repeated for each link component. Note that this will induce an orientation on each component of the link. Therefore, if the diagram is already oriented then the initial $X$ placement (of each link component) should be chosen to coincide  with the existing orientation.
  \item Add one more vertical and one more horizontal separating line and make torus identifications to get to a toroidal grid diagram.
\end{enumerate}
\begin{lem}
There exists an intersection parallelogram for every double point of a diagram of a knot.
\label{lem:InteresectionParralellogram}
\end{lem}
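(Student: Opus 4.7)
The plan is, given a double point $p$ of $D$ where smooth arcs $\gamma_1, \gamma_2$ meet with tangent vectors $v_1, v_2$, to build a parallelogram $P$ centered at $p$ whose two slanted sides are parallel to the tangent of one of the two arcs. That choice will automatically force that arc across the horizontal sides of $P$, and for a suitable aspect ratio it will force the other arc across the slanted sides.

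First I would note that $v_1, v_2$ are linearly independent by transversality at $p$, so at least one of them --- which I relabel as $v_1$ --- is not horizontal. I would then choose a small open disk $U$ around $p$ whose closure contains no other double points of $D$ and no arcs of $D$ besides portions of $\gamma_1$ and $\gamma_2$; such a $U$ exists because the double points of a diagram are isolated. Next I would work in the skewed basis $(e_1, v_1)$ with $e_1 = (1,0)$, and take $P \subset U$ centered at $p$ of ``width'' $2W$ along $e_1$ and ``height'' $2H$ along $v_1$. Then $P$ has horizontal top and bottom sides and two slanted sides parallel to $v_1$; these are genuinely slanted because $v_1$ is not horizontal. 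The tangent line of $\gamma_1$ at $p$ lies in the $v_1$-direction alone, so for small $W, H$ the arc $\gamma_1$ crosses each horizontal side in one point and never meets the slanted sides. Writing $v_2 = \alpha e_1 + \beta v_1$, linear independence gives $\alpha \neq 0$, so the tangent line of $\gamma_2$ exits through the slanted sides whenever the aspect ratio $W/H$ is chosen strictly smaller than $|\alpha|/|\beta|$ (any positive value works if $\beta = 0$). By transversality this property passes from the tangent line to $\gamma_2$ itself for $P$ small enough, yielding exactly one intersection point with each slanted side. A final infinitesimal perturbation of $W$ and $H$ preserves the strict aspect-ratio inequality while moving the arcs off of the four corners of $P$.

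The only substantive step is the aspect-ratio calculation in the skewed basis; the rest is a routine ``shrink $P$ until the tangent-line picture dominates'' argument. This step also explains why the statement is phrased for parallelograms rather than rectangles: if it happens that $|v_{1,y}| = |v_{2,y}|$ with $v_1 \ne \pm v_2$, then both tangent lines have the same slope magnitude, and any axis-aligned rectangle centered at $p$ would send both arcs through the same pair of opposite sides, violating the definition. Allowing a slanted pair of sides parallel to $v_1$ breaks this symmetry and makes the construction work uniformly in $v_1, v_2$.
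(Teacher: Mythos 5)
Your argument is correct and takes essentially the same geometric approach as the paper: choose the slanted sides of the parallelogram parallel to (the linear approximation of) one arc at the double point, and shrink the parallelogram so that the tangent-line picture dominates. The paper carries this out quantitatively --- bounding the Taylor remainder by $|R_i(x)| \le M_i r^2/2$ and then solving the resulting inequalities for $r$ and $H$ --- whereas you phrase the same shrinking step as a cleaner transversality argument in the skewed basis $(e_1, v_1)$ with an aspect-ratio condition $W/H < |\alpha|/|\beta|$; the content is the same, but your version avoids the explicit error-term bookkeeping.
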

\proof
We will represent these arcs by graphs of a function.  
If either arc involved in a double point has an infinite or zero slope then simply draw a rectangle with horizontal and vertical sides to satisfy the conditions of an intersection parallelogram.  

Next,  suppose that neither arc has an infinite or zero slope.
In almost all cases the conditions of an intersection parallelogram can again be satisfied with a rectangle with horizontal sides.  However, a rectangle with horizontal sides will not suffice in general.  For example, the two arcs comprising a double point may have slopes of 1 and -1 in the diagram of the knot.  To handle the general case let $p$ be an intersection point in some projection diagram and linear approximate the two arcs $f_1,f_2$ involved in the crossing by $l_1,l_2$.  Then we have $f_i=l_i+R_i$ where $R_i$ is the remainder, or error term.  If $|f''_i(x)|\leq M_i$ on  some interval $(p-r,p+r)$ then $M_i\frac{r^2}{2}\geq |R_i(x)|$.  Let $M=\max\set{M_1, M_2}$, $H$ the vertical distance between the point $p$ and the horizontal lines, and $L$ the horizontal distance between the other set of parallel lines (see figure \ref{fig:ParallelogramExistenceSetup}). 
\begin{figure}[h]
  \centering
  \includegraphics[width=.5 \textwidth]{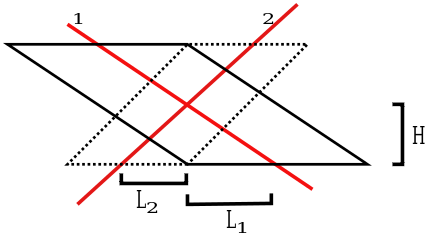}
  \caption[ \ \ Diagram used for existence of intersection parallelograms]{Example intersection parallelogram.}
  \label{fig:ParallelogramExistenceSetup}
\end{figure}
 Suppose, that we chose to make the non-horizontal set of lines parallel to the linear approximation of $f_i$, $l_i$.  Then what we need to show is the following
\[r\geq H\geq M\frac{r^2}{2} \ \ \ \ \ \  \mbox{and} \ \ \ \ \ r\geq L_i\geq M\frac{r^2}{2}.\]  
The left inequalities, of both equations, demand that the parallelogram be inside the region where the inequality for the error term holds and the right inequalities guarantee that the curves have the desired properties (entering and exiting in the desired manner).  Using  $L_i=\frac{H}{B_i}$, where $B_i$ is the slope of the line $l_i$, we see that our inequalities are in terms of $M, B_i, H, r$.  Since the quantities $M$ and $B_i$ are fixed, we only need to choose $r$ and $B$ such that the constraints are satisfied.
Some algebra shows that by letting  $M=c_M10^m$ and $B_i=c_i10^{b_i}$ where $c_M, c_i\in (1,10)$ and choosing $r=10^{(-m-2b_i-2k)}$ and $H=\frac{c_i}{2}10^{(-m-3b_i-3k )}$ where $k$ is a positive integer chosen to make $(-2b_i-k)<0$ satisfies the inequalities.

  Hence there also exists a choice of parallelogram for all non-zero and non-infinite slopes
\qed

\begin{lem}
Let $G(A)$ and $G'(B)$ be any two oriented grid planar arcs without double points satisfying the conditions below,  where $A$ and $B$ are smooth planar arcs without double points and $G(A)$ and $G'(B)$ represent two possibly different applications of the planar grid algorithm on these arcs.  
\begin{enumerate}
 \item  The endpoints coincide in a superposition of the two grid diagrams.
 \item  The initial (resp. final) segments of A and B agree.
 \item  The arcs have the same net turn.
\end{enumerate}
Then $G(A)$ and $G'(B)$ are grid planar isotopic arcs.
\label{lem:GridPlanarIsotopicArcs}
\end{lem}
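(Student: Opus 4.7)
The plan is to reduce both grid arcs to a canonical form via grid planar isotopies, after which conditions (1)--(3) will force the canonical forms to coincide. Since both arcs are free of double points, transfer moves and $P$ moves do not apply, so only kink (de)stabilizations and un-nested commutations play a role in the reduction.

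First I would apply kink destabilizations exhaustively to each of $G(A)$ and $G'(B)$. Each such move removes an $LR$ or $RL$ substring and preserves the endpoints, the initial and final segments, and the net turn. After finitely many moves, both arcs carry \emph{reduced} arcwords: arcwords with no $LR$ or $RL$, and hence consisting of a single run of $L$'s, a single run of $R$'s, or the empty word $I$. Next I would argue, using conditions (2) and (3), that the two reduced arcwords agree as strings. Condition (2) fixes the initial tangent direction, and a reduced arcword then determines the final tangent direction via the cumulative $90^\circ$ turns it encodes; condition (3) fixes the net turn. These together pin down the number and sign of remaining turns, provided we first rule out ``extra'' full loops ($L^4$ or $R^4$) surviving inside a reduced arcword.

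With identical reduced arcwords in hand, I would then align the two arcs spatially via un-nested commutations. Traversing from the common initial segment, I locate the first turn in each arc; if its position differs, a sequence of un-nested row or column commutations slides the turn in $\bar{G}(A)$ to its position in $\bar{G}'(B)$. Because both arcs are simple, these commutations are unobstructed, and because $\alpha_0$ and $\beta_0$ commutations never have to be invoked (the fixed initial segment lies in the common fundamental domain), every commutation used is a permissible grid planar isotopy. Iterating turn by turn, the two reduced arcs come to coincide in the superposition of the two grid diagrams.

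The hard part will be the ``no $L^4$ in a reduced arcword'' step: constructively exhibiting un-nested commutations that shrink a loop formed by four consecutive same-sense turns in a simple grid arc, down to a $LR$ or $RL$ pair that a further kink destabilization can then remove. The key geometric observation is that such a loop bounds a topological disk disjoint from the rest of the simple arc, so adjacent rows and columns running through the interior of the disk are obstruction-free for commutations, allowing the loop to be collapsed to a unit square whose remaining turns cancel. Once this loop-removal routine is in place, the final alignment step is a routine induction on the number of turns in the (now uniquely-determined) reduced arcword.
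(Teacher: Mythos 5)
Your approach—first canonicalize each arc to a reduced arcword via kink destabilizations, then argue the canonical forms coincide, then align by commutations—differs from the paper's. The paper does not canonicalize: it first stabilizes/destabilizes so both arcs have the same \emph{number} of turns, then commutes turns into matching positions, and then uses a destabilization--stabilization pair to flip the sign of any mismatched turn, iterating turn by turn. It then treats separately the case where other arcs in the superposition obstruct the isotopy, by exploiting the fact that all such intersections pair off with opposite sign (oriented intersection number zero) and can be slid out of the way. Your strategy is a legitimate alternative in spirit, but as written it has two substantive gaps.

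First, the loop-removal step does not work as sketched. You claim that four consecutive same-sense turns ``bound a topological disk disjoint from the rest of the simple arc,'' so the interior is obstruction-free for commutations. That is false in general: a reduced arcword $L^k$ with $k\ge 4$ describes a square spiral, and the region enclosed by the outermost $LLLL$ contains the rest of the spiral (all the inner coils). So the interior is not disjoint from the rest of the arc, and the commutations you want to perform are obstructed by the arc itself. More fundamentally, kink (de)stabilizations remove only $LR$ or $RL$ and un-nested commutations do not change the arcword, so the reduced arcword is a grid planar isotopy invariant of a simple arc; your ``shrink $L^4$ to $LR$'' routine would have to violate this invariance. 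The paper sidesteps the issue entirely by not attempting to remove $L^4$: it matches turn counts and then flips turn directions in place, staying within moves that preserve the reduced arcword. (Relatedly, your observation that conditions (2) and (3) only pin down the reduced arcword modulo $L^4$ is where the real work lies; in the paper's intended use the two smooth arcs are the same or planar isotopic, which forces equal reduced arcwords and makes the $L^4$ case vacuous.)

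Second, you assert ``because both arcs are simple, these commutations are unobstructed,'' but you are working in a superposition of two grid diagrams. Even though each arc has no double points, arcs from the two diagrams can intersect each other in the superposition, and other arcs from either diagram may lie in the rows and columns you want to commute. The paper devotes the second half of the proof to this: it uses the vanishing oriented intersection number between any two simple arcs to pair up crossings of opposite sign, stabilizes to match the sub-arcs between such a pair, and then commutes the obstructing arc out of the way. Your proposal omits this step entirely, so even granting the first part, it would only prove the lemma for arcs isolated in the plane.
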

\proof 

This proof follows essentially the same methods used  to show the PL-Schoenflies theorem (see \cite{Bing83} page 20-21).
Let $G(A)$ and $G(B)$ be given as above.
We need to give a procedure to take the grid arc $G(A)$ to the grid arc $G'(B)$.   
Since these grid arcs are in two different grid diagrams there may be complications in giving a planar grid isotopy taking one grid arc to the other.
Specifically, the other arcs may initially obstruct a desired grid planar isotopy.  
An example of this sort of obstruction is in the first diagram in figure \ref{fig:GridPlanarIsotopicArcs}.  

Initially we assume that  $G(A)$ and $G'(B)$ don't intersect other grid arcs in the superposition.

Let $S$ be the superposition of $G(A)$ and $G'(B)$ such that the endpoints agree.
\begin{figure}[h]
  \centering
  \includegraphics[width=\textwidth]{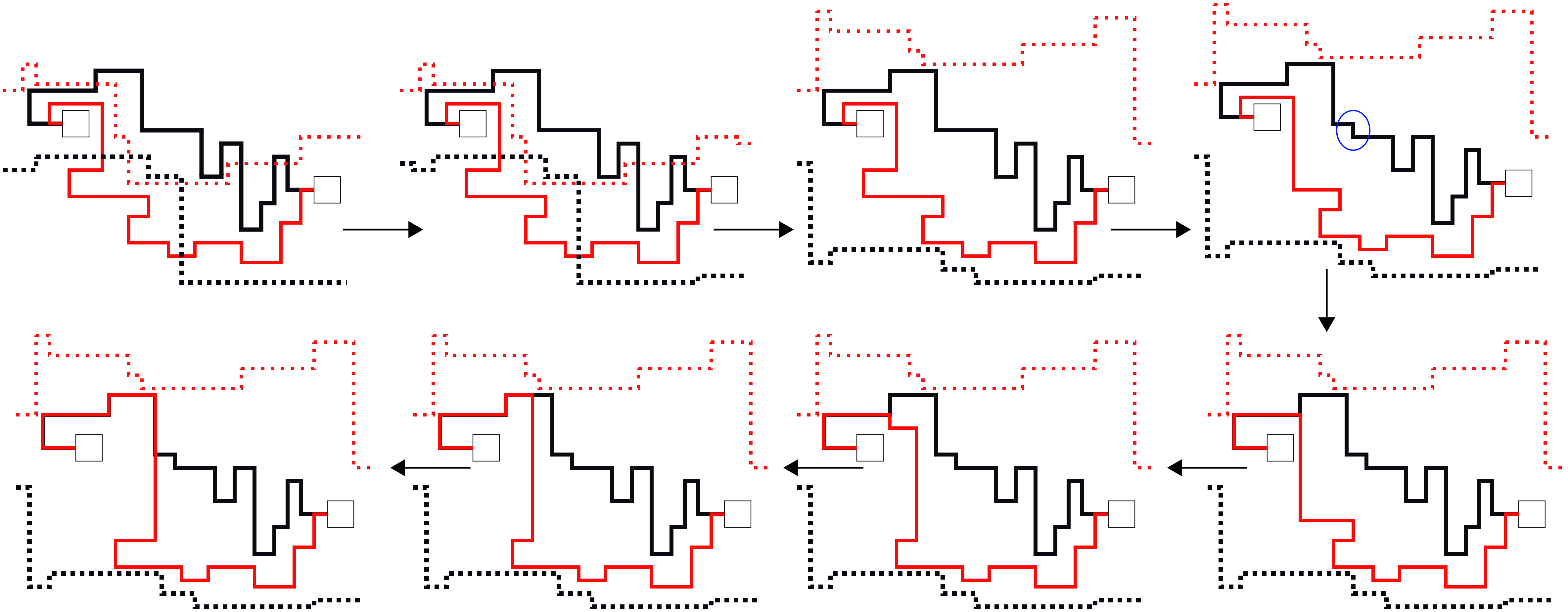}
  \caption[ \ \ Making grid arcs agree]{Two grid diagrams superimposed.  The solid arcs are the two arcs we want to resolve.  The dotted arcs are other arcs that might initially obstruct desired grid planar isotopies.  Note that any apparent double points are an artifact of superimposing the two diagrams.}
  \label{fig:GridPlanarIsotopicArcs}
\end{figure}

Algorithm for resolving $G(A)$ and $G'(B)$ starting from the simple situation depicted in the third diagram on the top row of  figure \ref{fig:GridPlanarIsotopicArcs}:
\begin{enumerate}
   \item Stabilize or destabilize arcs $G(A)$ and $G'(B)$ until they have the exact same number of turns (corners).  This can be done since $net(G(A))=net(G'(B))$.   (The circle in the top right diagram denotes where a stabilization is performed.)  
  \item Perform commutations to move the next turn in each diagram to correspond (bottom right diagram).
  \item If the grid arcs turn in the same direction do nothing.  If they do not, then perform a destabilization followed by a stabilization  that changes the direction of the turn (middle bottom diagrams).  
  \item Repeat steps 3 and 4 until $G(A)=G'(B)$.
\end{enumerate}

Now assume that there are other arcs intersecting $G(A)$ or $G'(B)$ in the superposition.
Note that the condition that neither $G(A)$ nor $G'(B)$ has double points on it means that all of the intersections in the superposition occur between an arc from each still.
Furthermore, this means that the oriented intersection number must be zero between any two arcs.

To planar grid isotope these arcs out of the way, consider two intersection points of opposite sign (connected by an arc with no other double points on it).
Stabilize so that the intersection point satisfies the properties of the lemma and then apply the algorithm just mentioned to make these two sub arcs agree.
Next, apply commutations to one diagram so that the two sub arcs that were in agreement no longer coincide.
Repeat this procedure for all pairs of intersection points.

Hence, we can grid isotope the obstructing arcs out of the way and then apply the procedure above to make $G(A)$ and $G'(B)$ coincide through a planar grid isotopy.
\qed

\begin{prop}
  The output of the planar grid algorithm is well defined.  That is, for a given diagram $D$ two choices of rectangles will lead to grid diagrams of the same grid planar isotopy class.
\label{prop:WellDefinedGridAlgorithm}
\end{prop}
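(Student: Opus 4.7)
The plan is to decompose the algorithm's output into local data near each double point (the intersection squares) and the grid arcs connecting them (the lettered arcs), and then reduce the proposition to Lemma \ref{lem:GridPlanarIsotopicArcs} applied to each lettered arc, after first establishing that the local picture at each crossing is well defined up to grid planar isotopy.

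For the intersection squares, given two different choices of intersection parallelogram at a double point $p$, observe that both choices yield the same local crossing data (same overstrand/understrand) but may differ in the exact orientation and size of the rectangular neighborhood, as well as in which of the two arcs is designated ``horizontal'' versus ``vertical.'' I would argue that kink de/stabilizations resize the local pattern, un-nested commutations shift the adjacent straight segments, and, if the two choices effectively swap the roles of the horizontal and vertical arcs at the crossing, a transfer move compensates for the swap. A short case analysis according to the four possibilities depicted in Figure \ref{fig:IntersectionSquares}, together with their reflections, should cover all comparisons and produce a grid planar isotopy between the two local pictures.

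Once the intersection squares are made to coincide, the lettered arcs in the two diagrams have common endpoints on the boundaries of the intersection squares, share common initial and final segments (both emerge from an intersection square in the prescribed direction dictated by step 4 of the algorithm), and have the same net turn, since this quantity is determined by the smooth arc being approximated and is invariant under the allowed grid planar moves. Hence Lemma \ref{lem:GridPlanarIsotopicArcs} applies to each lettered arc and yields a grid planar isotopy making the two sets of lettered arcs agree simultaneously. The remaining choices in steps 6--9 of the algorithm, namely the exact positions and spacing of the separating lines and the torus-identification lines, only affect the grid's overall layout; any such reparameterization is realized by un-nested commutations of adjacent empty rows and columns, which are grid planar isotopies, and the initial $X$-placement is forced once an orientation is fixed.

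The hard part will be the intersection square step. One must verify that transfer moves together with un-nested commutations and kink de/stabilizations actually suffice to relate any two local realizations, since two different parallelograms can force the grid realizations to differ by more than a simple translation and may introduce extra kinks that need to be cancelled. The fact that a transfer move is precisely what is needed to pass an intersection point around a corner (Figure \ref{fig:TransferMove}) is the key geometric input, but turning this intuition into a complete case check, ensuring that each of the four local configurations in Figure \ref{fig:IntersectionSquares} can be converted to any other by an explicit finite sequence of allowed moves, is where the bulk of the work lies.
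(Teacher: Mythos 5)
Your decomposition matches the paper's: fix the intersection squares, resolve the connecting arcs by Lemma~\ref{lem:GridPlanarIsotopicArcs}, and then argue that different choices at the double points give grid planar isotopic output. You correctly identify that the transfer move together with kink (de)stabilizations and un-nested commutations must suffice for the local resolution, and you correctly flag the intersection square step as the bulk of the work. However, that is precisely the step you leave unresolved, and the paper supplies two specific arguments there that your sketch does not.

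First, with the intersection \emph{square} held fixed, the paper isolates exactly three internal degrees of freedom: which linear approximation is parallel to the non-horizontal sides of the parallelogram, which edges of the trapezoids the horizontal arc crosses, and the \emph{orientation of the vertical grid arc through the crossing when the horizontal grid arc is the overarc}. Only the third of these requires a transfer move (Figure~\ref{fig:OrientationChoiceIsotopy}); your guess that the transfer move is needed to compensate a swap of the horizontal and vertical roles misplaces it. Once that orientation is normalized, the remaining two choices are handled not by an ad hoc case check but by applying Lemma~\ref{lem:GridPlanarIsotopicArcs} again, now to the eight short grid arcs inside the square, each running from the double point to a boundary intersection. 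Your proposal only applies Lemma~\ref{lem:GridPlanarIsotopicArcs} to the external lettered arcs and so misses this reuse of the lemma, which is what makes the ``short case analysis'' actually terminate. Second, for two genuinely different intersection squares $E$ and $F$ (different size or position), the paper does not attempt to directly make them coincide; instead it uses a nesting argument: take a smaller square contained in both, and compare $E$ to the nested square by adjoining four covering rectangles and applying Lemma~\ref{lem:GridPlanarIsotopicArcs} to the four arcs between the two boundaries, then likewise for $F$. Without this containment trick, your phrase ``once the intersection squares are made to coincide'' is doing unearned work: you have no procedure for making them coincide. So the overall route is the same, but the heart of the proof — both the reuse of Lemma~\ref{lem:GridPlanarIsotopicArcs} inside the square and the nesting comparison between squares — is absent from the proposal.
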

\proof 

Let a diagram $D$ be given and suppose that the numbering of intersections and lettering of the arcs  is the same in both applications $1$ and $2$ of the grid algorithm arriving at grid diagrams $G_1(D)$ and $G_2(D)$.

First, suppose that the intersection squares are chosen the same in both algorithms.  
Then the only possible variation is in the choice of covering rectangles used to get a grid arc for each of the arcs connecting the intersection rectangles.  
Lemma \ref{lem:GridPlanarIsotopicArcs}, applied individually to each of these grid  arcs, states that any differences in $G_1(D)$ and $G_2(D)$ can be rectified by grid planar isotopies.  
There is no difficulty since each resolved grid arc does not obstruct the resolution of any other grid arc.
 Therefore, we only need to consider how different choices of intersection rectangles and parallelograms and trapezoids affect grid diagrams $G_1(D)$ and $G_2(D)$.

First, keep the choice of intersection rectangle fixed and consider the changes to $G_1(D)$ and $G_2(D)$ that result from different choices of intersection parallelograms and trapezoids. There are three choices that must be made: (1) which linear approximation to have parallel to the non-horizontal sides of the parallelogram; (2) deciding whether the horizontal smooth arc is going to intersect the trapezoids $T_{iL}, T_{iR}$ in the vertical or horizontal edges; (3) choice of orientation of the grid arc through the intersection point when the horizontal grid arc is the overarc.  The two left diagrams in figure \ref{fig:GridArcReplacement} give examples of choice 3; choices 1 and 2 are depicted in  figure \ref{fig:IntersectionSquares}.  We will show that all of these choices lead to grid diagrams which are grid planar isotopic.  We will take care of choice 3 and then choices 1 and 2 together.

The grid diagrams resulting from the two  choices of orientation through the intersection point (choice 3) are related by a transfer move, kink stabilizations and kink destabilizations, whose composition is a grid planar isotopy.  The sequence of moves is in figure \ref{fig:OrientationChoiceIsotopy}.
\begin{figure}[h]
  \centering
 \includegraphics[width=\textwidth]{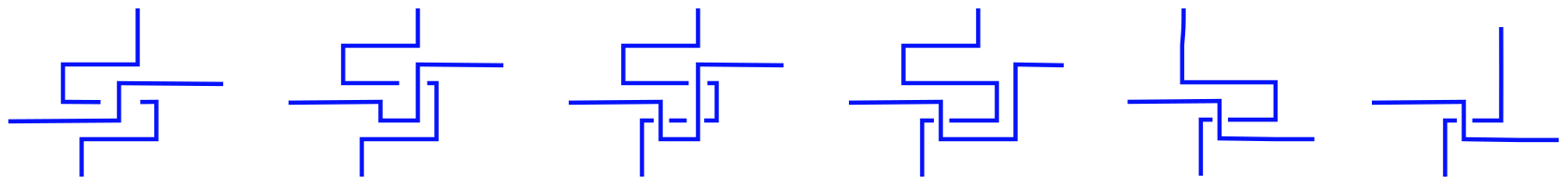}  \caption[ \ \ Reversing orientation of vertical grid arc through a double point]{A sequence of grid planar isotopies taking one choice of grid arcs in an intersection parallelogram to the other choice.  In order from left to right: a stabilization; the transfer move begins with an R2 move;  transfer move ends with an R2 move; destabilizations to the last diagram.}
  \label{fig:OrientationChoiceIsotopy}
\end{figure}

If the orientations through the double point do not  agree then apply the sequence of moves (or the reverse) in figure \ref{fig:OrientationChoiceIsotopy} so that both grid arcs have the same orientation through the double point. Next apply lemma \ref{lem:GridPlanarIsotopicArcs} pairwise to the corresponding eight grid arcs (in two diagrams) with one end point the double point and the other end point being the intersection point of an arc and the  intersection square. This means that the choices involved amounted to a grid planar isotopy.  As an example, figure \ref{fig:IntersectionSquares} depicts the four choices coming from choices 1 and 2.  It can be seen that the four choices of grid arcs do indeed only differ by a sequence of  kink destabilizations which is a grid planar isotopy.

 Hence, we only need to show that different choices of intersection squares also induce differences that are resolved by grid planar isotopies.  To this end, we will show that the difference between two choices of intersection squares where one square sits in the interior of the other square gives rise to differences in $G_1(D)$ and $G_2(D)$ that are resolved by a grid planar isotopy.  Then given two choices of intersection squares say $E$ and $F$ we only need to choose a square that sits inside both of them (or contains both of them) to show that this result holds for all choices of intersection squares. Given two intersection squares $S_1,S_2$ it is clear one can always choose a third intersection square $S$ such that $S_2\supset S\subset S_1$ since double points must reside in the interior of intersection squares.

Suppose that intersection square $E$ is larger than $F$ and contains $F$ in its interior (see figure \ref{fig:IntersectionSquareChoices}).
\begin{figure}[h]
  \centering
  \includegraphics[width=.6 \textwidth]{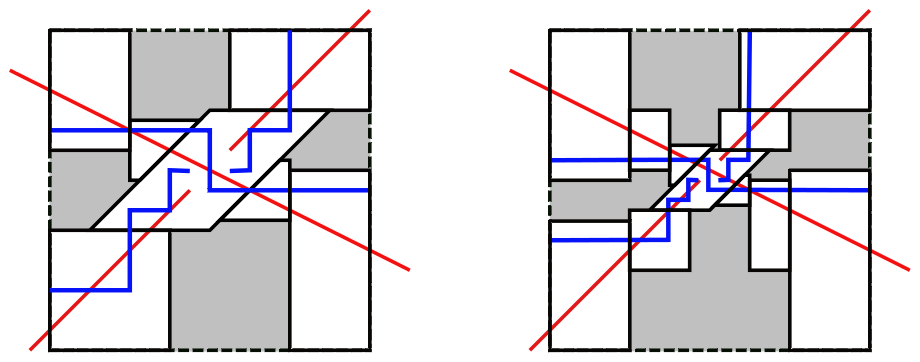}
  \caption[ \ \ Intersection squares used in proof of prop. \ref{prop:WellDefinedGridAlgorithm}] {Large intersection square $E$ on the left and intersection square $F$ on the right.}
  \label{fig:IntersectionSquareChoices}
\end{figure}
  We have shown that choices of shapes on the interior of intersection squares don't matter up to grid planar isotopy.  So make some choices for $E$ and make some choices for $F$.   In the first case $G_1(E)$ we simply apply the normal algorithm to get some grid arcs representing the part of the diagram contained in $E$.  In the second case apply the grid algorithm to the area $F$ and then choose four more rectangles to cover the arcs connecting the boundaries of $F$ and $E$.   By an application of lemma \ref{lem:GridPlanarIsotopicArcs}, each of the four grid arcs in $E$ (with endpoint the double point and some intersection point between the arc and boundary of $E$) is grid planar isotopic to the corresponding grid arc in $F$ adjoin the four covering rectangles.

Hence, the output of the planar grid algorithm is well defined.
\qed

\begin{thm}
Smooth planar isotopy classes are in one to one correspondence with  grid planar isotopy classes.  That is two smooth diagrams $A$ and $B$ are smooth planar isotopic iff  $G(A)$ and $G(B)$ are grid planar isotopic.
\label{thm:IsotopyClasses}
\end{thm}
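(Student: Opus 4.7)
The plan is to prove the bijection in both directions, leveraging the planar grid algorithm and Proposition~\ref{prop:WellDefinedGridAlgorithm}.

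For the direction ($\Leftarrow$), I would check that each generator of grid planar isotopy lifts to a smooth planar isotopy of the associated smooth diagram (obtained by smoothing the right-angle corners of the grid diagram). Un-nested commutations slide two disjoint arcs past one another; $P$ moves slide a double point along an overstrand; and kink (de)stabilizations fold an $LR$ or $RL$ kink in or out of a straight arc. The transfer move is the only subtle case: although defined as a composition of two $R_2$ moves, together they realize exactly the smooth motion of pushing a double point around a corner of a strand, as shown in Figure~\ref{fig:TransferMove}. Concatenating these local isotopies across a given grid planar isotopy from $G(A)$ to $G(B)$ yields a smooth planar isotopy between smoothings of $G(A)$ and $G(B)$, which are themselves smoothly planar isotopic to $A$ and $B$.

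For the direction ($\Rightarrow$), suppose $A$ and $B$ are smoothly planar isotopic via an ambient isotopy $H_t$, $t \in [0,1]$. After a generic perturbation I may assume only finitely many critical times $t_1 < \ldots < t_N$ occur at which the combinatorial data feeding the planar grid algorithm must change. On each open subinterval between consecutive critical times, the algorithmic choices (intersection squares, parallelograms, and covering rectangles) can be made continuously and identically for every $H_t$ in that subinterval, so by Proposition~\ref{prop:WellDefinedGridAlgorithm} all resulting grid diagrams lie in a single grid planar isotopy class. At each critical time I would identify the grid move realizing the change: (i) an arc straightening or bending gives a kink stabilization or destabilization; (ii) two disjoint arcs exchanging their vertical or horizontal position in the grid give an un-nested commutation (never an $\a_0$ or $\beta_0$ commutation, since the isotopy takes place in the plane and nothing is pulled around the torus identification); (iii) a double point sliding along a strand past a corner gives a $P$ move; (iv) a double point sliding around a corner of a strand gives a transfer move. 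Concatenating these grid planar isotopies across all subintervals and critical events produces the desired grid planar isotopy from $G(A)$ to $G(B)$.

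The main obstacle is case (iv): verifying that pushing a double point around a corner corresponds to exactly a transfer move and nothing more. Here I would use Lemma~\ref{lem:GridPlanarIsotopicArcs}: outside a small neighborhood of the corner, the grid arcs just before and just after the critical event satisfy the hypotheses of that lemma, so any residual difference is already a grid planar isotopy, and the essential local change is captured exactly by the transfer move of Figure~\ref{fig:TransferMove}. A secondary subtlety is ensuring the generic perturbation of $H_t$ does not introduce a Reidemeister move in disguise, which follows because planar isotopy preserves the number of double points and their combinatorial incidences.
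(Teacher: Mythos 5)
Your $(\Leftarrow)$ direction is essentially the same as the paper's "only if" direction: smooth the corners and observe that each generator of grid planar isotopy bounds a smooth planar isotopy, so the composition does too.

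Your $(\Rightarrow)$ direction takes a genuinely different route, and it contains a gap. The paper does \emph{not} attempt to identify a specific grid move at each critical time. Instead it chooses new quadrilaterals at time $d_i - \epsilon$ (strictly \emph{before} the failure), arranged so that they remain valid on $[d_i - \epsilon, d_{i+1} - \epsilon]$; since at $d_i - \epsilon$ both the old and new quadrilaterals feed the algorithm on the \emph{same} smooth diagram $h(D, d_i - \epsilon)$, Proposition~\ref{prop:WellDefinedGridAlgorithm} applies directly and gives a grid planar isotopy $H_i$ between the two outputs. Within each subinterval the quadrilaterals are fixed so the grid diagram is literally constant, and the composition $H_n \comp \cdots \comp H_1$ finishes the proof. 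The entire burden is carried by the well-definedness proposition, and no move-by-move dictionary is needed.

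Your proposal instead compares the grid diagram just before $t_i$ with the grid diagram just after $t_i$; these are built from \emph{different} smooth diagrams with \emph{different} quadrilateral choices, so Proposition~\ref{prop:WellDefinedGridAlgorithm} does not apply to that comparison on its own. You then assert a four-part classification of critical events (i)--(iv) and a correspondence to kink (de)stabilizations, un-nested commutations, $P$ moves, and transfer moves, but this classification is neither shown to be exhaustive nor shown to produce exactly those moves. In particular, the paper's actual enumeration of failure modes (an arc isotoping around a corner of a trapezoid, an arc isotoping across a side like a finger move, or an intersection square failing) does not line up cleanly with your list, and a single failure event can in principle require several grid moves or none at all (the paper notes "many times a choice of larger trapezoid will account for the isotopy"). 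Your appeal to Lemma~\ref{lem:GridPlanarIsotopicArcs} to patch case (iv) is in the right spirit, but that same style of argument must be carried out, and justified, for \emph{every} failure mode; as written this is the missing step, and it is exactly what the paper's re-choose-before-the-failure trick is designed to avoid.
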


\begin{proof}
The only if direction is fairly straightforward: smooth the corners and observe that since each grid planar isotopy  is the initial and final state of a smooth planar isotopy then the initial and final states of a composition of finitely many grid planar isotopy will also be the initial and final states of a smooth planar isotopy.  
From the definition of grid planar isotopy it follows that there exists a smooth planar isotopy connecting the beginning and final state.

To see the other direction, let $D, D'$ be  planar isotopic diagrams, let $G(D)$ and $G(D')$ be grid diagrams obtained from applying the planar grid algorithm, and let $h\colon \R^2\times[0,1]\to \R^2$ be an isotopy with $h(D,0)=D$ and $h(D,1)=D'$.  
We need to show that $G(D')$ is grid planar isotopic to $G(D)$.  

There were choices of quadrilaterals, $Q_0$, in getting $G(D)$.  
Fix these choices of quadrilaterals and let $d_1\in [0,1]$ be the first value that the  quadrilaterals do not satisfy the properties set forth in the grid planar algorithm with respect to the arcs in  $h(D,d_1)$.
We will enumerate the ways in which these quadrilaterals can fail shortly.

At $d_1-\epsilon$ choose different quadrilaterals, $Q_1$ that satisfy the properties of the grid planar algorithm with respect to the arcs in  $h(D,t)$  for $t\in [d_1-\epsilon, d_1]$.  Typically, only a few quadrilaterals will be changed.  However, if desired or needed, every single quadrilateral could be chosen differently so that $Q_0\cap Q_1=\nullset$.  The fact that the output of the planar grid algorithm is well defined up to grid planar isotopy (proposition \ref{prop:WellDefinedGridAlgorithm}) means that the different choice of quadrilaterals will lead to grid planar isotopic diagrams.

Repeating the above procedure will yield values $d_i$ for $i=1,\ldots, n$ such that the choice of quadrilaterals $Q_i$ satisfy the needed properties with respect to the arcs in  $h(D,t)$ for $t\in[d_i-\epsilon,d_{i+1}-\epsilon]$.  Let $H_i$ be the grid planar isotopy induced by the different choice of quadrilaterals  at $d_i-\epsilon$.  Suppose that $d_n=1$ and choose $Q_n$ to correspond to the choice of quadrilaterals used to get $G(D')$.  Then $H:  =H_n\comp H_{n-1}\comp \cdots \comp H_1$ is a grid planar isotopy from $G(D)$ to $G(D')$.

\begin{figure}[ht]
  \centering
  \includegraphics[width=\textwidth]{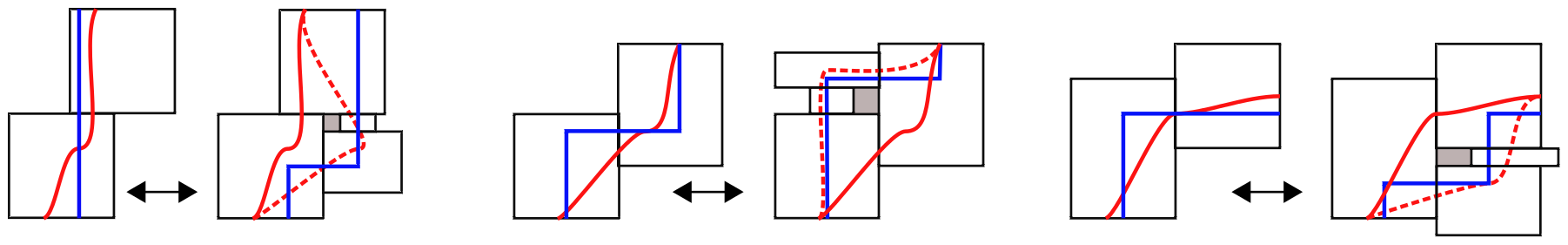}
  \includegraphics[width=\textwidth]{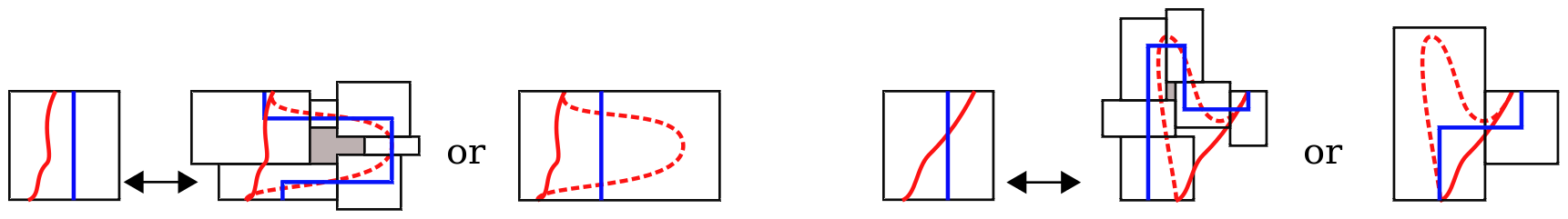}
  \caption[ \ \ Example of failed  quadrilaterals during isotopy] {Five different examples of failing quadrilaterals that are not intersection parallelograms.  The diagrams in the top row isotope a curve past a corner and the diagrams in the second row show two different resolutions for isotoping a curve by a finger move across an edge of a rectangle.  The darker lines are the grid diagram arcs, the lighter solid  line is the original curve and the dotted  lines are the isotoped curve.  Grey shaded regions are exterior to all rectangles.}
  \label{fig:IsotopyRectangles}
\end{figure}
A quadrilateral that fails to satisfy the needed properties required in the grid planar algorithm  either involves an intersection parallelogram or not.  
If it does not involve an intersection parallelogram, then either the smooth arc isotopes around a corner of a trapezoid or it isotopes across a side of a trapezoid (much like a finger move).  
Five different examples  of these two types of  isotopies are shown in  figure \ref{fig:IsotopyRectangles}.  

The top two left diagrams have a reduced arcword of $I$. 
 The top right has a reduced arcword of $R$ or $L$ depending on orientation.  
In general, many more kinks may be  introduced during the  isotopy  and the arcword may  become quite long even though the reduced arcword will not change as can be seen in the diagrams.
Many times a choice of larger trapezoid will account for the isotopy.  A couple of examples of these can be found in the bottom two finger moves in figure \ref{fig:IsotopyRectangles}.

To deal with an isotopy that requires a change in the intersection square, use the same idea used to prove that the algorithm was well defined.  Namely,  any 
time the intersection square is about to fail the conditions set forth in the algorithm (\eg moves across the edge of a parallelogram, etc.) change the intersection square to a small square.  Then choose a larger square containing the small square such that this new intersection square satisfies the conditions of the planar grid algorithm  with respect to the arcs in $h(d,t)$ for $t\in [d_i-\epsilon, d_i]$.  Again these  choices of intersection squares only induce a grid planar isotopy difference between the two diagrams by appealing to proposition \ref{prop:WellDefinedGridAlgorithm}.
\end{proof}

\section{Grid Movies and Grid Movie Moves}
In this section we  prove our main result.  In order to do this we first use the results of the last section to define grid movies and grid movie isotopies.  
We then  discuss: compatibility of the combinatorial structures coming from movies and grid diagrams;  how smooth movies and smooth movie isotopies are represented by grid movies and grid movie isotopies; model moves; and define grid movie moves.

To define grid movies we first need to define moves to grid diagrams corresponding to type I critical points: births, deaths and saddles (see figure \ref{fig:CriticalGridMoves}).

\begin{description}
\item[Grid birth move] A \emph{grid birth move}  takes a grid diagram $G$ to $G'$ with respective grid indices $n$ and $n+1$ by  adding a row and column and places an $O$ and an $X$ in the square that are  both in the new row and column.
\item[Grid death  move] A \emph{grid death move} is the reverse of a birth: it takes a grid diagram $G'$ to $G$ with grid indices $n+1$ and $n$ by  removing an $O$ and $X$ that are in the same row and column and deletes the row and column.
\item[Grid saddle move] A \emph{saddle move} takes a grid diagram $G$ to $G'$ with the same grid index by exchanging the places of two $O$'s such that one $O$ is directly left and down from the other $O$ to the configuration where one $O$ is directly down and right of the other.
This move cannot change the number of double points.
\end{description}
\begin{figure}[h]
  \centering
  \includegraphics[width=\textwidth]{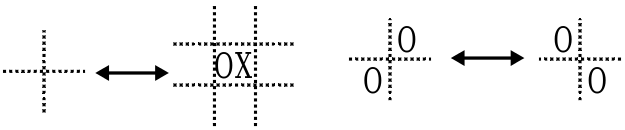}
  \caption[ \ \ Unmarked grid births, deaths and saddles]{Type I critical grid moves:birth and death on the left and the saddle move on the right.}
  \label{fig:CriticalGridMoves}
\end{figure}

\begin{defn}
  A \emph{grid movie} is a finite ordered sequence of grid diagrams such that any two sequential grid diagrams are related by a single grid move from the following list: birth, death, saddle, commutation, stabilization, destabilization and the identity.
\end{defn}

\begin{defn}
  A \emph{grid movie isotopy} is a sequence of grid movies such that the grid diagram in each level undergoes a grid planar isotopy.
\end{defn}
Note that any two grid movies can only be isotopic if they have the same number of stills.  However, if needed extra stills can be added by duplicating a still and inserting the extra copy right next to the original.

\subsection{Modified Planar Grid Algorithm and height functions in the stills}

In order to give a meaningful method of representing a smooth knotted surface diagram by a grid movie, we need to modify the planar grid algorithm to account for a height function.
We will use a complementary coordinate system $(v,v_1, v_2, v_3)$.  
Recall that $v$ is used to project to an $\R^3$ hyperplane and $v_1$ is the vector used to define the Morse function needed to get a smooth movie (without a fixed height function in each still).
We will use $v_2$ to define a generic height function in each still (perturb $v_2$ if need be), which will be used to define a smooth movie with a fixed height function in each still.  We will refer to this height function as the second height function, the $v_2$ height function, or the still height function.
Define the vector $v_r$ to be a vector that is parallel to all of the vertical lines of rectangles used in the planar grid algorithm.
Unfortunately, we cannot use $v_r=v_2$ since the horizontal lines in a grid diagram would not be generic with respect to the second height function (not to mention the common occurrence of multiple intersection points along the same horizontal line).  
Instead we will choose $v_r$ to be a vector in the plane containing $v_2,v_3$ such that the angle between $v_r$ and $v_2$ is $\theta<\arctan(\xi)$ for small $\xi>0$ (see figure \ref{fig:Dictionary}) .
We will eventually choose $\xi< \frac{1}{N}$ where $N$ is the maximum grid index of any still of the sub-collection used to represent a knotted surface diagram.
In summary, given a smooth knotted surface in 4-space we choose a complementary coordinate system to get a smooth movie with a fixed height function in each still and then, by choosing $v_r$, define the direction vertical lines are drawn in our grid diagrams.

Only two modifications need to be made to the planar grid algorithm so that it respects the height function.  
First, change step (1) of the planar grid algorithm to the following:
\begin{description}
\item[(1h)]  Draw intersection squares around each double point and draw a rectangle about each local maximum point (resp. minimum point) such that: each rectangle contains a single critical point; there are only two intersection points  occurring on the bottom and right  edges (resp. top and left edges) between a rectangle and the arcs of the diagram.  
Define these to be the critical rectangles of a grid diagram.
\end{description}
Secondly, when replacing the smooth arcs by grid arcs in step 4 of the planar grid algorithm, make sure that: (1) there is only corner in each non-intersection rectangle; and (2) the height ordering  of the horizontal grid lines of the critical rectangles have the same height ordering as the critical points they represent.

\begin{defn}
  The \emph{height planar grid algorithm} is the planar grid algorithm with the two modifications above.
\end{defn}

\subsection{Grid Movies from Smooth Movies}
We need a method of getting a grid movie from a smooth knotted surface diagram such that passing to this grid representation and back to a smooth knotted surface diagram yields a diagram isotopic to the original.  In this subsection we present such a method.  The details are in the constructive proof of the following theorem, which uses the dictionary found in  figure \ref{fig:Dictionary} between basic grid diagram moves and their corresponding sequence of FESIs.

\begin{thm}
\label{thm:MovieRepresentation}
  To any smooth knotted surface $K$ a grid movie $\G$ can be assigned.  For any grid movie $\G'$ there is a smooth knotted surface $K'$ whose corresponding grid movie is $\G'$.  Furthermore, if $\G=\G'$ then $K$ is smoothly isotopic to $K'$.
\end{thm}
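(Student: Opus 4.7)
The plan is to build the forward assignment $K \mapsto \G$ by applying the height planar grid algorithm stillwise to a smooth movie representative of $K$, build the reverse assignment by concatenating the smooth elementary string interactions (FESIs) prescribed by the grid moves of $\G'$, and then conclude the consistency statement from Theorem~\ref{thm:CRS1997} together with Theorem~\ref{thm:IsotopyClasses}.

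For the forward direction, I would first choose a complementary coordinate system $(v,v_1,v_2,v_3)$ for $K$, perturbing $v_2$ as needed so that the second height function $h$ is generic on every still, and then pick $v_r$ in $\mathrm{span}(v_2,v_3)$ making angle $\theta<\arctan(1/N)$ with $v_2$, where $N$ bounds the grid indices of the stills produced. By Theorem~\ref{thm:CRS1997} the smooth movie with second height function has a finite ordered list of critical times: the $v_1$-Morse critical times producing type I, II, III critical points, and the $h$-singular times producing the remaining FESIs of Figure~\ref{fig:FESIs}. Selecting one still just before and just after each critical time, and applying the height planar grid algorithm to each, yields a finite sequence of grid diagrams. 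By the dictionary of Figure~\ref{fig:Dictionary} every consecutive pair in this sequence differs by a single grid move (birth, death, saddle, commutation, stabilization, destabilization, or the identity), where stills lying in the same open interval between critical times are related by grid planar isotopy courtesy of Theorem~\ref{thm:IsotopyClasses}. Concatenation produces the grid movie $\G$.

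For the reverse direction, given a grid movie $\G'$, I would treat each grid still as a smooth link diagram by rounding corners, and between two consecutive stills insert the sequence of smooth FESIs and level-preserving planar isotopies respecting $h$ prescribed by the connecting grid move via the same dictionary. Splicing these pieces yields a smooth movie with a fixed second height function on every still, and by Theorem~\ref{thm:CRS1997} this determines a smooth knotted surface $K'$. For the consistency claim, if $\G=\G'$ then the two constructions produce smooth movies built from the same ordered sequence of stills (up to grid planar isotopy, hence up to smooth planar isotopy by Theorem~\ref{thm:IsotopyClasses}) joined by the same sequences of FESIs, so the two movies differ at most by level preserving isotopies of $\R^3$ that respect $h$; Theorem~\ref{thm:CRS1997} then furnishes a smooth ambient isotopy from $K$ to $K'$.

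The main obstacle is establishing the dictionary of Figure~\ref{fig:Dictionary} rigorously in both directions. This requires a case analysis over the eleven FESIs of Figure~\ref{fig:FESIs} and the seven grid moves, verifying that the choice of $v_r$ with $\theta<\arctan(1/N)$ keeps the vertical grid lines generic with respect to $h$ while preventing spurious critical points of $h$ from appearing, that each stabilization, destabilization, and commutation indeed corresponds to a FESI or planar isotopy respecting $h$, and that conversely each FESI is realized (up to grid planar isotopy) by exactly one of these moves. Enhancing Theorem~\ref{thm:IsotopyClasses} to track the still height function along the isotopy is the subtle ingredient that makes both the well-definedness of the forward map and the interpolation step of the reverse map go through.
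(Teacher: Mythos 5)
Your overall strategy—use the height planar grid algorithm stillwise going forward, use the FESI dictionary of Figure~\ref{fig:Dictionary} going backward, and invoke Theorem~\ref{thm:CRS1997} for the consistency claim—matches the paper's proof in its broad strokes. However, you are missing an essential preprocessing step that the paper performs before applying the grid algorithm, and without it the dictionary correspondence you rely on does not actually hold.

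The paper does not apply the height planar grid algorithm directly to the smooth movie $\K_{[a,b]}$ obtained from the complementary coordinate system. Instead, it first acts on $\K_{[a,b]}$ by a level-preserving isotopy of $\R^3$ that does \emph{not} respect the second height function, designed to send certain FESIs to specific longer sequences of FESIs (the replacements depicted in Figure~\ref{fig:AsymmetricFESIs}). Only after this normalization does the grid movie algorithm run. The reason this is necessary is that single raw FESIs—for example an $R2$ occurring in a generic position, or a camel-back move—do not correspond to a single grid move in isolation; they must first be decomposed into the asymmetric configurations that grid de/stabilizations, commutations and transfer moves actually realize. In the camel-back case the required substitution involves movie move 23a, so the paper explicitly appeals to Theorem~\ref{thm:SmoothMovieMoves} (the first movie move theorem, which allows level-preserving isotopies that do not respect $h$) to justify that the normalized movie $\K'$ still represents a surface isotopic to $K$. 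Your proposal asserts that "by the dictionary\dots every consecutive pair in this sequence differs by a single grid move" as if the dictionary applied directly to the raw FESI list; you do acknowledge at the end that establishing the dictionary is the main obstacle, but you frame it as a verification exercise (a case analysis over the eleven FESIs) rather than as a construction that requires modifying the smooth movie first. If you try to run the case analysis without the Figure~\ref{fig:AsymmetricFESIs} normalization you will find FESIs that genuinely do not match any single grid move, so this is a gap, not merely an omitted detail.

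Two smaller differences worth noting. First, your discretization selects one still just before and just after each critical time and grid-ifies those; the paper instead slides the chosen quadrilaterals $Q_s$ continuously in the time direction and records a new grid diagram whenever the quadrilateral conditions fail, explicitly ordering the resulting grid moves if several occur simultaneously. The paper's sliding construction is what guarantees that consecutive grid diagrams differ by exactly one grid move and is also what lets the paper later prove that grid movie isotopies track smooth level-preserving isotopies. Your sampling approach would need an extra argument that only one grid move happens between consecutive sampled stills. Second, in the consistency claim the paper's argument is shorter than yours: it observes that every surface producing a given grid movie $\G$ has the same sentence $S'$, and that $S$ and $S'$ are sentences of isotopic surfaces, so uniqueness up to isotopy follows directly rather than by comparing two separately built movies via level-preserving isotopies. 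Both routes work, but yours silently relies on the fact that the forward and backward constructions produce the same sequence of FESIs, which itself is a consequence of the sentence-level argument the paper makes explicit.
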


\begin{proof}
  We will assume familiarity with sentences and words as used in \cite{CRS1997} for this proof.
Let $K_{[a,b]}$ be a smooth  knotted surface and let  $\K_{[a,b]}$ be a smooth movie with a height function fixed in each still of $K_{[a,b]}$.
By theorem 3.5.4 of \cite{CRS1997} there is a sentence $S$ associated to $\K_{[a,b]}$.
We will assign each particular FESI or sequence of FESIs with a sequence of grid moves as indicated by the dictionary in figure \ref{fig:Dictionary}.  In order to do this we need to perform an isotopy that will change the FESIs used to represent the surface $K_{[a,b]}$.  
Specifically, act on $\K_{[a,b]}$ with a level preserving isotopy of $\R^3$ (that does not respect the height function) such that each occurrence of the  FESIs below are sent to the sequence of FESIs indicated in figure \ref{fig:AsymmetricFESIs}.
\begin{figure}[h]
  \centering
  \includegraphics[width=.8\textwidth]{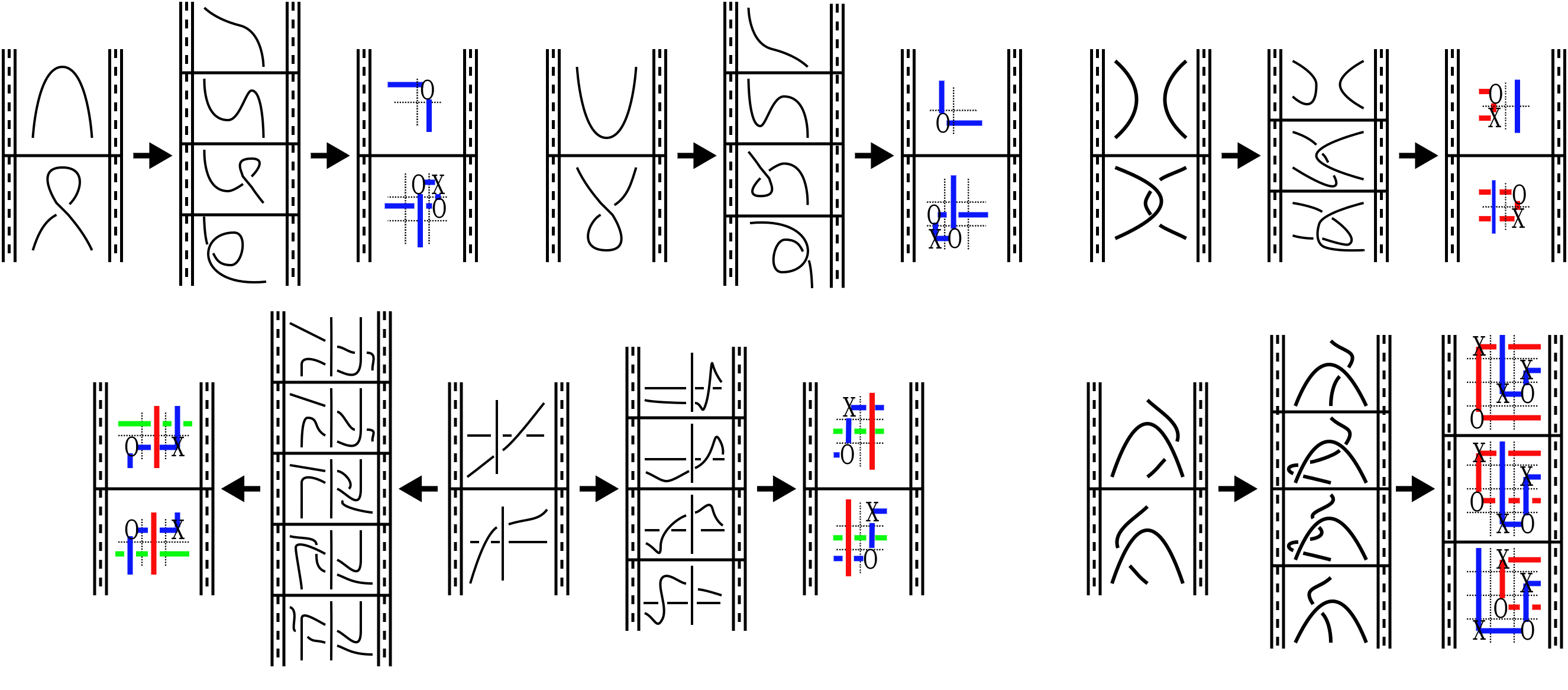}
  \caption{FESIs get mapped to a different sequence of FESIs, which will be identified with a basic grid move.   Three other R2 FESIs exist that are similar to the top right sequence as well as three other similar transfer moves.}
  \label{fig:AsymmetricFESIs}
\end{figure}

Let $\K'_{[a,b]}$ be the knotted surface diagram resulting from this isotopy and let $S'$ be the sentence associated to $\K'_{[a,b]}$. 
Except for the camel-back FESs this isotopy  only induces local planar isotopies near these type of FESIs.  In a camel-back FESI the movie move 23a is present.  Therefore, the first movie move theorem ensures that $\K_{[a,b]}$ and $\K'_{[a,b]}$ represent isotopic surfaces, which means that sentences  $S$ and $S'$ are sentences of isotopic surfaces.
To get a grid movie follow the grid movie algorithm below (let $\epsilon>0$ be small).

\textbf{Grid Movie Algorithm:}
\begin{enumerate}
 \item  Apply the height planar grid algorithm to $\K'_a\subset \R^2_a$ to get a grid diagram $G_0$ but don't delete any quadrilaterals used, call this set of quadrilaterals $Q_0$.  
If $\K'_a$ is a point then let $G_0$ be the empty grid diagram  and $G_1$ be the grid diagram  with an $X$ and $O$ in the only square and apply the algorithm to the  still $\K'_{a+\epsilon}$ to get $G_2$.
 \item Slide $Q_0$, setting $Q_s=Q_0$,   in the  time direction until the  still $\K'_{a_1}$ does not satisfy the properties set forth in the height planar grid algorithm.
 \item In the still $\K'_{a_1-\epsilon}$ modify the set $Q_0$ to the set $Q_1$, which will satisfy the needed conditions in $\K'_{a_1}$.  If this results  in more than one grid move being performed then choose an ordering for these moves and apply them individually to $G_0$ to get the next grid diagrams in the sequence $G_i, G_{i+1}, \dots, G_n$. 
 \item If a critical point, of either the Morse function or the second height function, is reached then use the dictionary in figure \ref{fig:Dictionary} to exchange specific sequences of FESIs with the necessary basic grid move representing that type of critical point.

   Specifically, for type I critical points use the corresponding grid move (birth, death, saddle).
   For type II and III critical points: a branch point corresponds to a Reidemeister Imove (achieved by  an R1-stabilization or R1- destabilization); a local minimum or maximum of the double point set (that is not an endpoint of $\overline{S_2}$) corresponds to a  Reidemeister II move (achieved by commutation);  a triple point corresponds to  a  Reidemeister III move (achieved by a commutation).  
For critical points of the second height function: a cusp move is achieved by a kink stabilization or a kink destabilization; a camel back move is achieved by a transfer move; each multi-local move is achieved by a commutation.
 \item Repeat  steps 2-4 until the still $\K'_b$ is reached.
 \item Delete all sets of rectangles $Q_l$.
\end{enumerate}

This will result in a finite sequence of grid diagrams $\G=[G_0, G_1, \ldots, G_m]$ for some $m\in \Z$ where the brackets indicate an ordered list.  
Specifically, $G_0$ corresponds to the grid diagram in stills $\R^2_{[0,a_1-\epsilon)}$ generated by quadrilaterals $Q_s$ for $s \in [0,a_1]$ and $G_1$ corresponds to the grid diagram in stills $\R^2_{[a_1-\epsilon, a_2-\epsilon)}$ generated by quadrilaterals $Q_s$ for any $s\in [a_1-\epsilon, a_2-\epsilon)$, etc.
This sequence is a grid movie $\G$ by construction and has the further property that it contains all FESIs present in sentence $S'$ along with some additional isotopy information.  Thus for any knotted surface a grid movie can be assigned to it.

\begin{figure}[p]
  \centering
  \includegraphics[width=\textwidth]{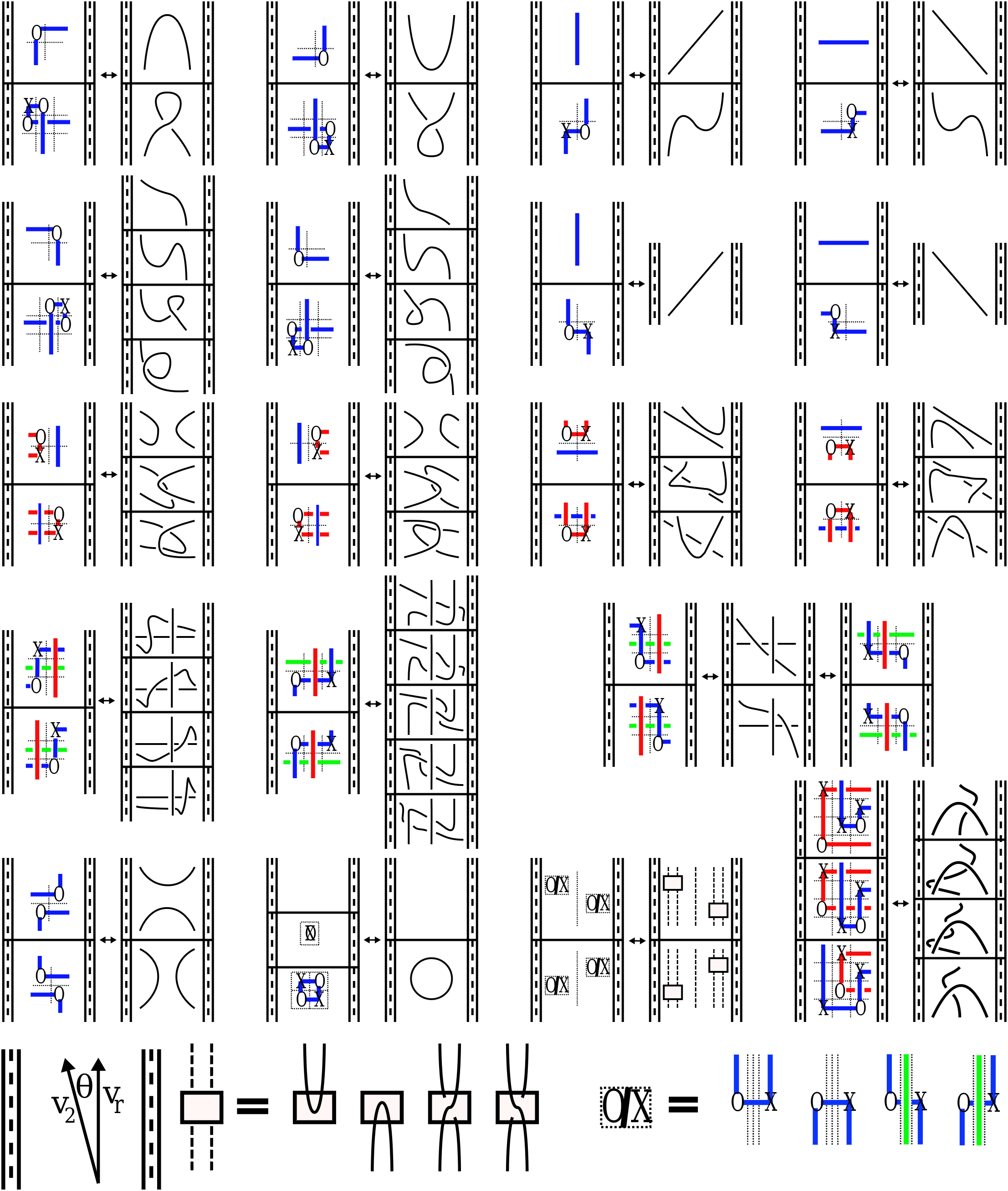}
  \caption{Dictionary of basic grid moves and their corresponding sequence of FESIs.  The 8 (de)stabilizations, four R2 commutations and the four R3 commutations  are depicted in the first and second, third, and fourth rows respectively.  The fifth row depicts the saddle, birth/death, multi-local  and camel-back moves.   }
  \label{fig:Dictionary}
\end{figure}

Next, suppose that a grid movie  $\G'=[G'_1, \ldots, G'_k]$ is given.
Use the dictionary in figure \ref{fig:Dictionary} to construct a sentence $S'$ (delete any duplicate stills to do so).
Then appeal to theorem 3.5.4 in \cite{CRS1997} to conclude that there exists a knotted surface $K''$ with movie $\K''$ whose corresponding sentence is the given one.
Next, reverse the dictionary identifications to arrive at a grid movie $\G''$.
In the process of creating a sentence from $\G'$ any extra level preserving (and height respecting) isotopy information was deleted.
Therefore, $\G''$  differs from $\G'$ by this isotopy information that does not effect the sentence $S'$. 
That is $\G''$ differs from $\G'$ by a grid movie isotopy that does not change the sentence $S'$. 
Apply a level preserving (and second height function respecting) isotopy of $\R^3$ to $\K''$ get a knotted surface movie $\K'$ representing a surface with sentence $S'$ that also has grid movie $\G'$.
Hence, there exists a knotted surface $K'$, with knotted surface diagram $\K'$, whose associated grid movie is $\G'$.

For the last part of the theorem let $K$ be a knotted surface with knotted surface diagram $\K$ and  associated grid movie $\G$ and let $K'$ be the knotted surface, with knotted surface diagram $\K'$, corresponding to $\G$ via the process described above.  
We need to show that $K$ and $K'$ are isotopic knotted surfaces.  
To see this we will simply review the procedure above and make an observation.
The knotted surface diagram  $\K$ has a sentence $S$, we then isotope this knotted surface diagram to an isotopic knotted surface diagram $\K'$ with sentence $S'$ and then associate a grid movie $\G$ to $\K'$ by the grid movie algorithm.  Clearly there exists a knotted surface  associated to $\G$ since $\K'$ gave rise to $\G$.  In fact there are many different surfaces that give rise to $\G$ and each of these have the same sentence $S'$.  Therefore, any knotted surface that gives rise to $\G$ must be isotopic to $K$ since $S$ and $S'$ represent isotopic knotted surfaces.
\end{proof}

\begin{defn}
  The grid movie $\G:  =[G_0, G_1, \ldots, G_m]$ \emph{represents} the knotted surface $K$ and the knotted surface diagram $\K=p(K)$ if $\G$ could be obtained from the process in theorem \ref{thm:MovieRepresentation}
\end{defn}

A quick aside: it is sometimes useful to think of a grid movie as a single grid diagram together with a finite ordered list of grid moves that act sequentially on the grid diagram,
\begin{equation}
  \label{eqn:sd}
  \G:  =[G_0,G_1, G_2, \ldots, G_n]=[G_0, g_1G_0, g_2g_1G_0, g_3g_2g_1G_0, \ldots \Pi_{i=1}^ng_iG_0]
\end{equation}
where $g_k$, for each $k$,  is one of the following  grid moves: birth,  death,  saddle, commutation, stabilization, destabilization, identity.

\subsection{Grid Movie Isotopies}

To get a grid movie isotopy from a smooth isotopy $\I$ of a movie $\K$ perform the grid movie algorithm to get a grid movie $\G_0$ that represents $\K_0$ but don't delete the ordered sets of rectangles $Q_l$ for $l\in [a,b]$.
Then run the isotopy forward until time $t_1$ when some still $\K_{b_1}$ with rectangles $Q_{b_1}$ doesn't satisfy the properties set forth in the planar grid algorithm.
Change the block of rectangles $Q_{[b_1-c,b_1+c]}$ (for some $c\in \R$) so that the isotopy satisfies the planar grid algorithm at $t_1$.
This will add a grid diagram to the grid movie $\G_i$ in some position $G_k$.
Call this new grid movie $\G_{i+1}$ and to make $\G_{i+1}$ grid isotopic to $\G_i$  insert a copy of $G_{k-1}$ at $G_k$ in $\G_i$ (and each grid movie $\G_n$ for $n<i$).
Repeat these steps until the isotopy is finished.

\subsection{Reidemeister, Movie, and Grid Movie Moves}
In this subsection we will discuss exactly how movie moves act on movies, develop notation that captures these changes explicitly and then apply this perspective to define grid movie moves.
To accomplish this, we will begin in the common ground of Reidemeister moves acting on a diagram of a knot.
\begin{defn}
  A \emph{sub-diagram} of a knot diagram  $E$ is a rectangular subset $F$ of $E$.
\end{defn}
Note that a sub-diagram will usually not be a diagram of a knot.  
We will choose model Reidemeister moves as shown in figure \ref{fig:ModelReidemeisterMoves}. 
\begin{figure}[h]
  \centering
  \includegraphics[width=\textwidth]{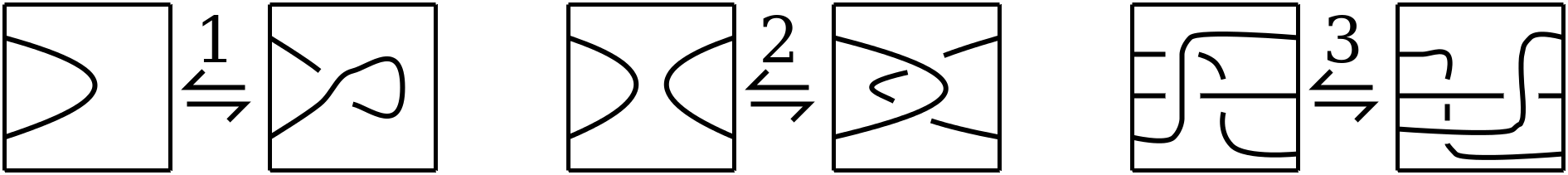}
  \caption{\ \ Model Reidemeister moves.}
  \label{fig:ModelReidemeisterMoves}
\end{figure}
We denote the left and right sub-diagrams of each Reidemeister move by $F_{iL}$ and $F_{iR}$ respectively.

To be very careful,  we will  define a function $R_i\colon E\to E'$ that operates on a knot diagram  with a sub-diagram $F_{iL}$ by  defining  $R_i$ restricted to the complement of $F_{iL}$ to be the identity and  $R_i(F_{iL})=F_{iR}$. 
Technically there are more than three moves once certain symmetries are taken into account.

Model moves are very restrictive, since there are very few diagrams that have these exact sub-diagrams.
However, we are also allowed to planar isotope a diagram so that if there is a sub-diagram planar isotopic to one of the diagrams in our model moves we are still able to perform the Reidemeister move.

Let $E$ be a diagram that has a sub-diagram that is planar isotopic (by $I$)  to a sub-diagram involved in a model Reidemeister move.  
To be very explicit, to perform a Reidemeister move on diagram $E$ the following procedure needs to be carried out: isotope $E$ by $I_s$,  to get a diagram, $I_1(E)$,  with a model Reidemeister  sub-diagram; perform the Reidemeister move $R\comp I_1(E)$; and then (if desired) reverse the isotopy $\overline{I}_t\comp R\comp I_1 (E)$, where $\overline{I}_s:  =I_{1-s}$, to arrive at a diagram $E'$.

The movie moves are also model moves that act on a movie of a knotted surface diagram.
Specifically, a movie move is a map that takes a collection of sub-diagrams $\D_{iL}:  =\cup_{v\in{[a_1,b_1]}}D_v$ (where $D_v\subset R^2_v$ is a sub-diagram)  to a different collection of sub-diagrams $\D_{iR}$.
In notation, a movie move is a map $\MM_i\colon \R^3\to \R^3$ such that $\MM_i$ is the identity on the complement of $\D_{iL}$ and $\MM_i(\D_{iL})=\D_{iR}$.
Again, since movie moves are model moves, they are quite restrictive and are combined with level preserving isotopies of $\R^3$ (that respect the $v_2$ height function) to make them robust.
Explicitly, to perform a movie move the following procedure needs to be carried out: isotope $\K_{[a,b]}$ by a level preserving isotopy $\I^t_{[a,b]}$ (that respects the $v_2$ height function), to get a movie with a model movie move; perform the movie move; and then (if desired) reverse the isotopy $\I^t$ with $\overline{\I}^t_{[a,b]}$ to arrive at a movie $\K'_{[a,b]}$.

We will define grid movie moves to be model moves as well.
This means that a grid movie isotopy  will almost always be required, since it will be rare that the exact sequence  of sub-diagrams of our model move will be contained in a grid movie.
\begin{defn}
\label{defn:SubDiagram}
  A \emph{sub-diagram}  of a grid diagram $G$  is a rectangular collection $D$ of  grid squares from $G$ that denotes the positions of $O$'s and  $X$'s  and specifies the direction of any grid arcs that intersect the boundary of $D$.
\end{defn}

From now on we will use $\D_{[c,d]}$ to indicate  a smooth collection of sub-diagrams and $\D_i$ to indicate a sequence of grid sub-diagrams.
We  denote smooth movie moves and grid movie moves with the same symbol $\MM$.
We hope the meaning will be clear from context.
We denote sub-diagrams of a grid diagram by the collection of $X$'s, $O$'s, and arcs in the sub-diagram.  



\begin{defn}
  A \emph{grid movie move} $\MM_i$ is a map
\begin{equation}
  \label{eqn:GridMovieMove}
  \MM_i\colon  \G \to \G' \ \ \ \ \ \ \mbox{ defined by }\ \ \ \ \ \ \  \begin{matrix}\MM_i(\D_{iL})= \D_{iR} \\ \ \ \MM_i|_{\D_{iL}^C}=\mbox{identity,} \end{matrix}
\end{equation}
where $\D_{iL}^C$ is the complement of $\D_{iL}$.
\end{defn}

We have defined grid movies, grid movie moves and grid isotopies.
We have also discussed how to get grid movies and grid isotopies from smooth movies and smooth isotopies.
All that remains to be done is to choose our model grid movie moves.
To do this, we will simply apply the process used in theorem \ref{thm:MovieRepresentation} to the neighborhood of a movie move.
With the following definition we have finally defined all the objects needed to prove theorem \ref{thm:GridMovieMoves}.  

\begin{defn}
  The list of  \emph{model grid movie moves} $\MM_i, i=1-30$ are in the four page figure \ref{fig:GridMovieMoves} and each is obtained by a particular application of the method presented in theorem \ref{thm:MovieRepresentation} to arcs in the stills of the smooth movie moves.  The last move $\MM_{31}$ is discussed below.
\end{defn}
\begin{figure}[p]
  \centering
  \includegraphics[width=1\textwidth]{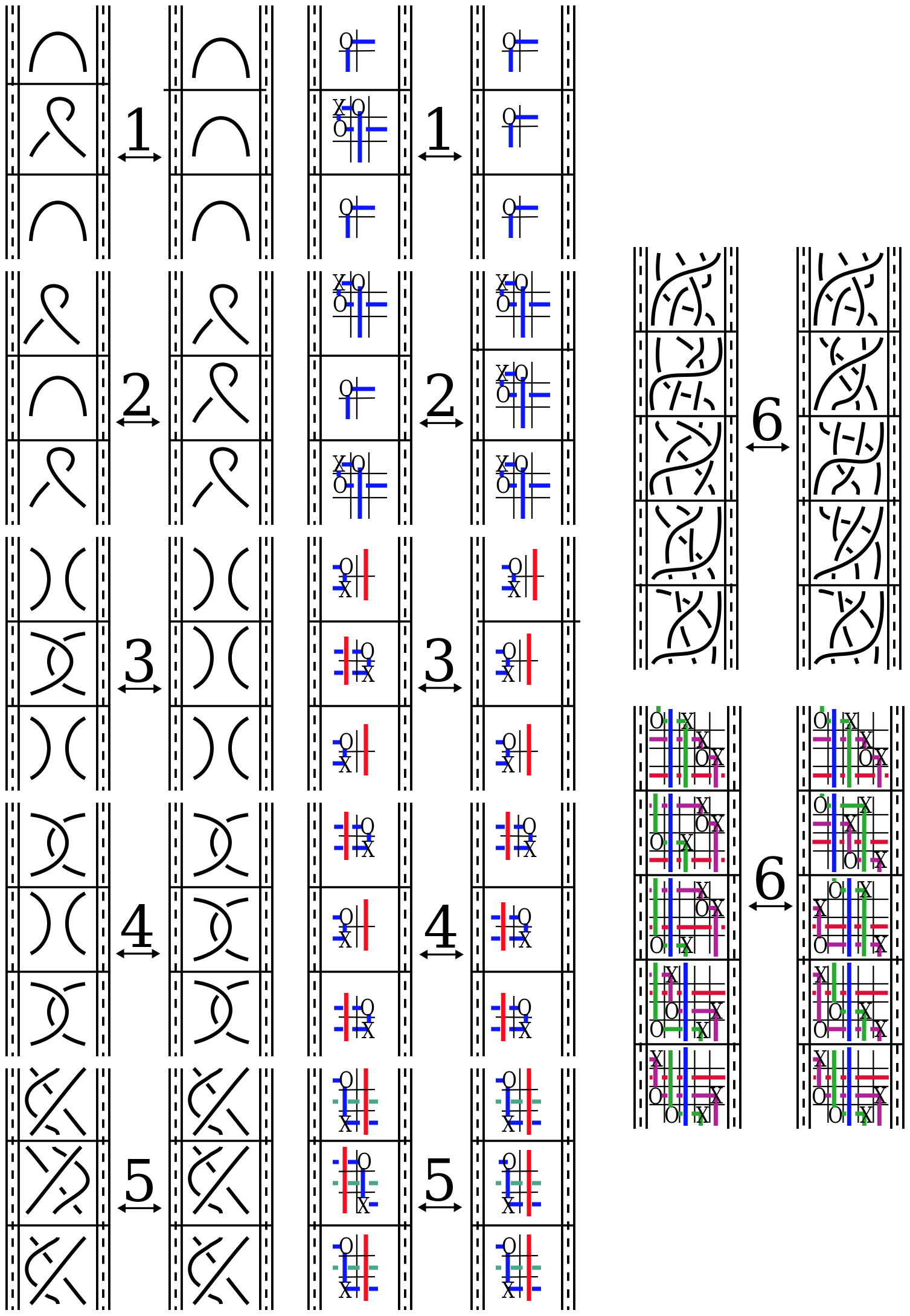}
\end{figure}
\begin{figure}[p]
  \centering
  \includegraphics[width=1\textwidth]{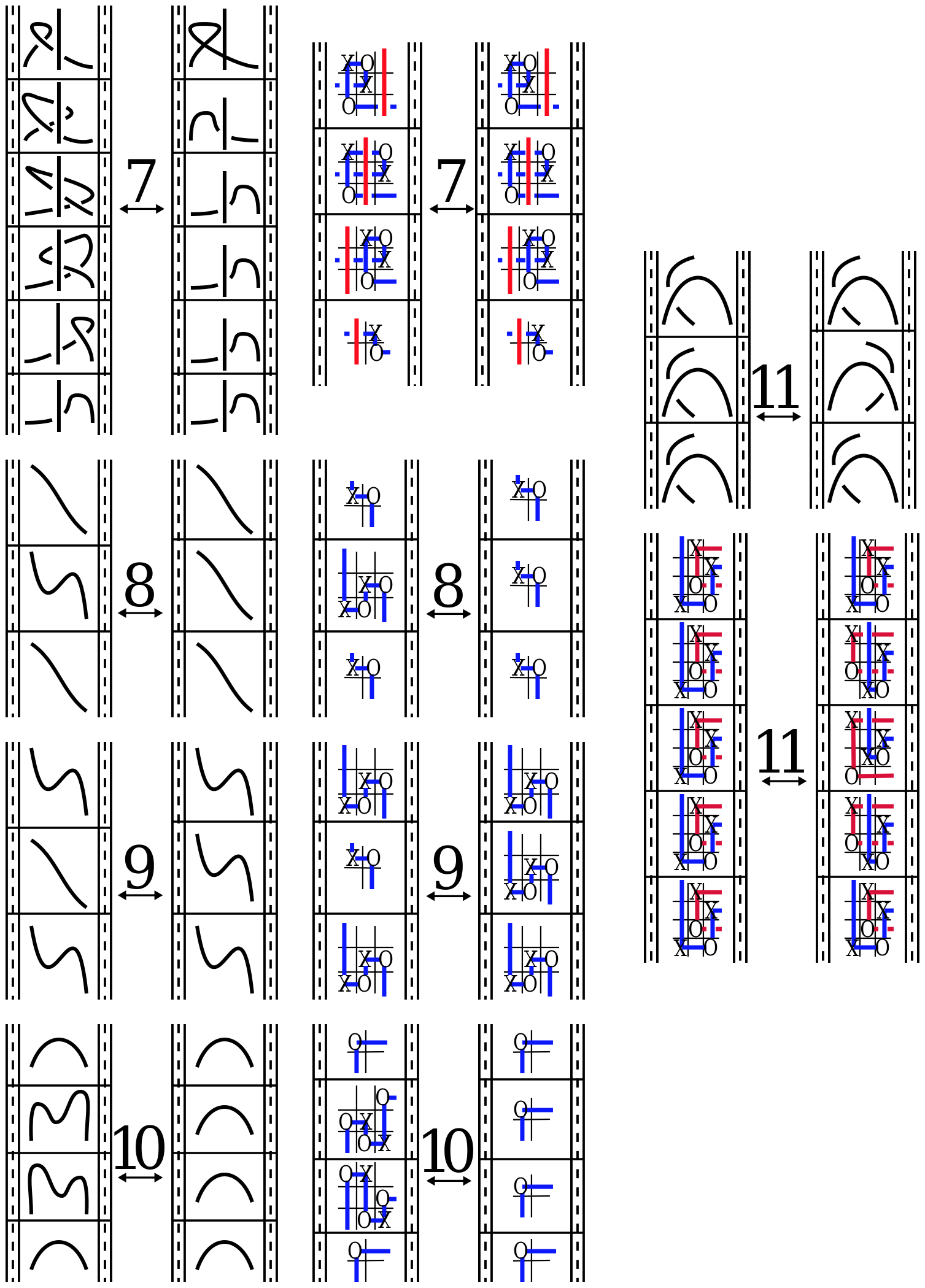}
\end{figure}
\begin{figure}[p]
  \centering
  \includegraphics[width=1\textwidth]{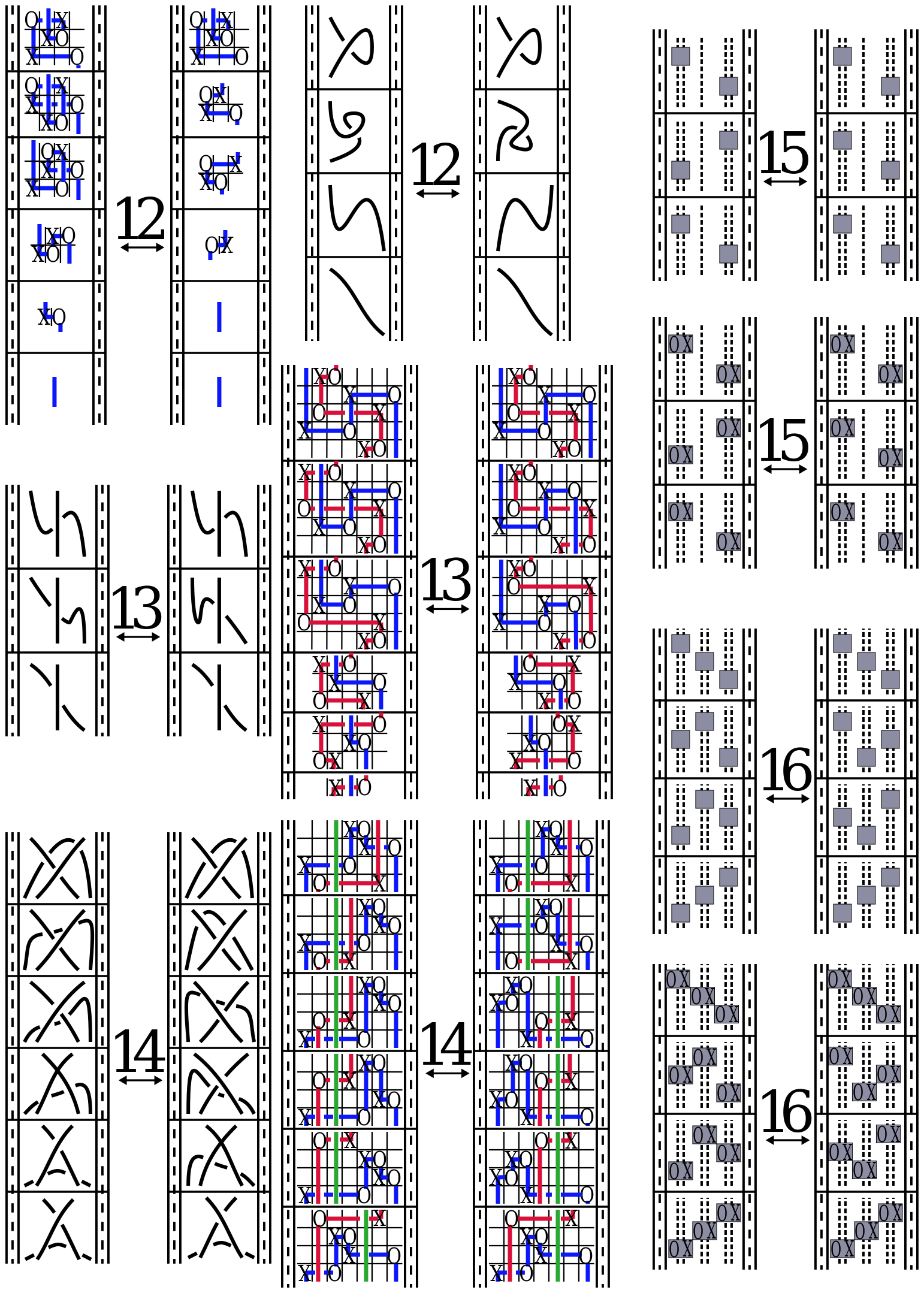}
\end{figure}
\begin{figure}[p]
  \centering
  \includegraphics[width=1\textwidth]{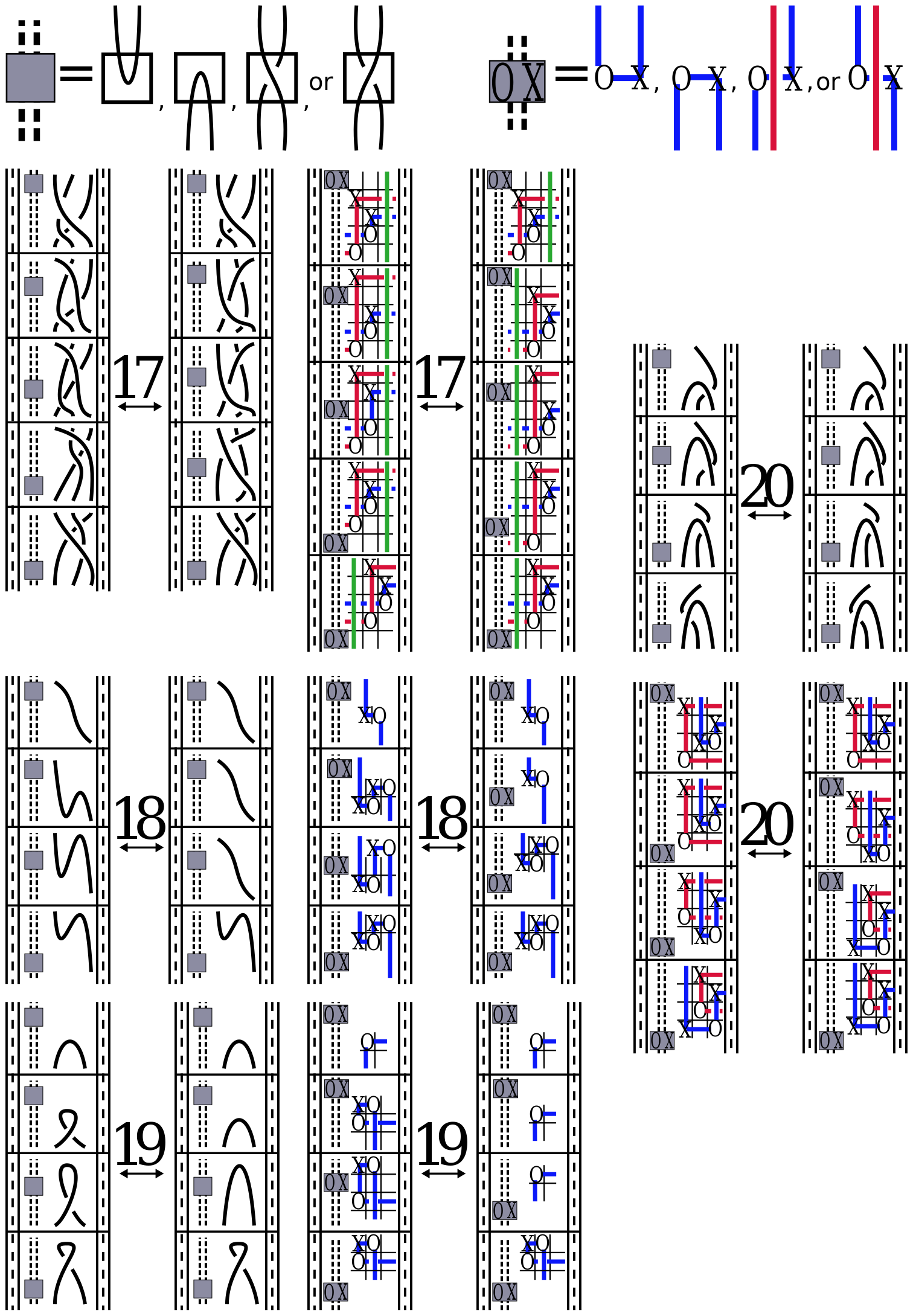}
\end{figure}
\begin{figure}[p]
  \centering
  \includegraphics[width=1\textwidth]{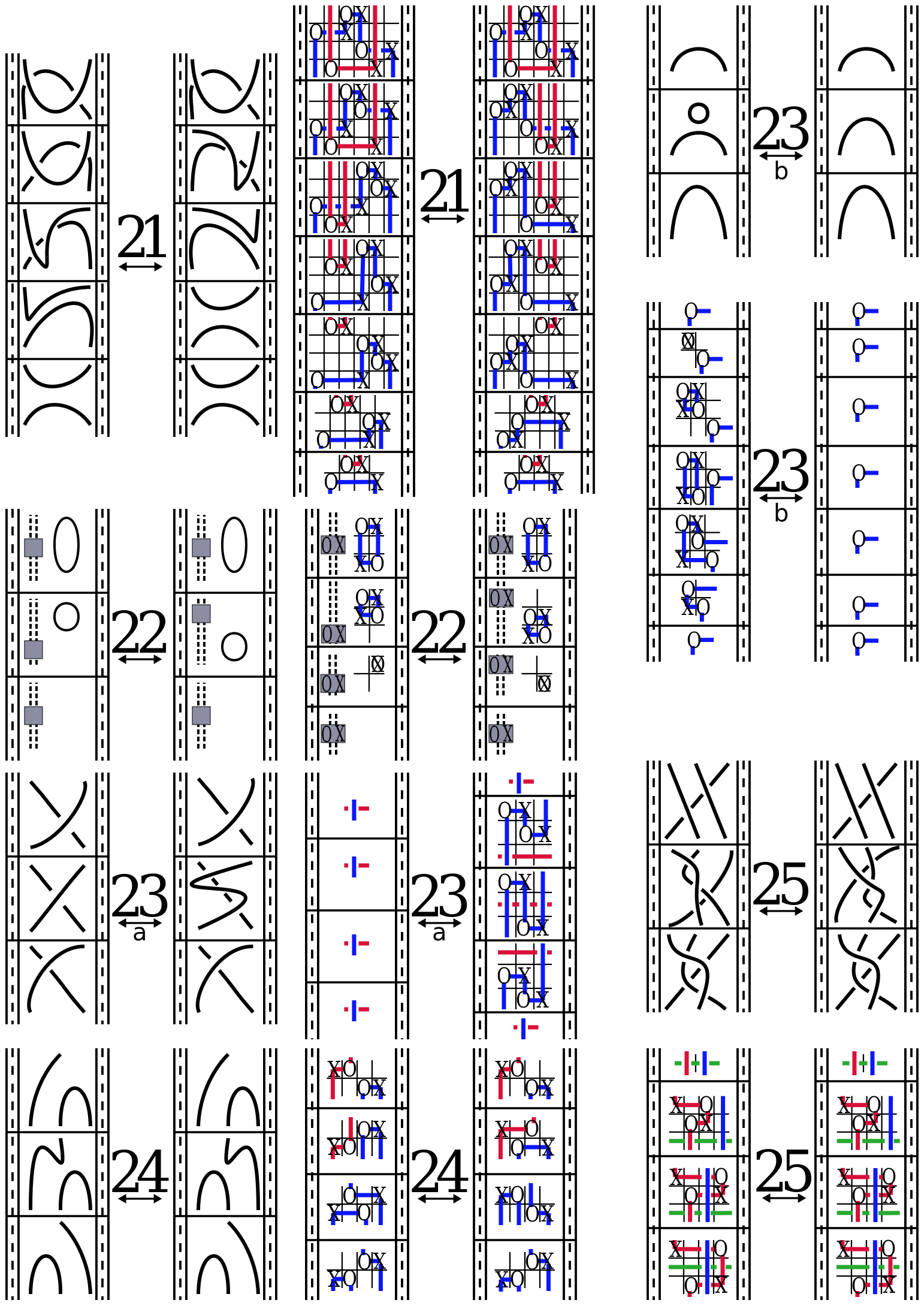}
\end{figure}
\begin{figure}[p]

   \centering
  \includegraphics[width=.95\textwidth]{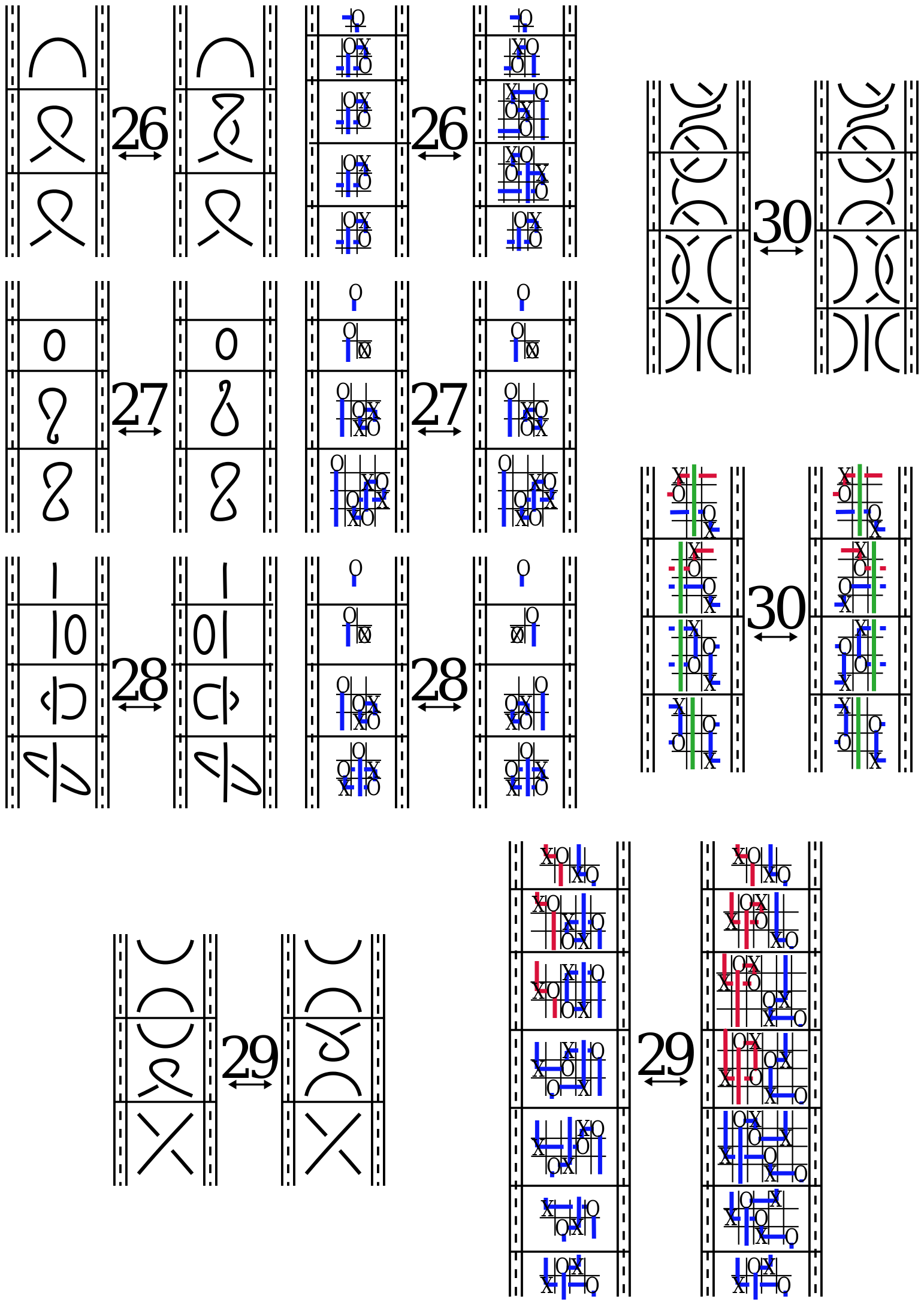}
  \caption{Smooth and Grid movie moves $1, \cdots, 30$.}
  \label{fig:GridMovieMoves}
\end{figure}

The grid diagrammatic representation of  movie move 31 can be denoted by  
\[  [F g_1 h_1E , F g_1g_2 h_1E , F g_1g_2 h_1h_2E ] \leftrightarrow [F g_1 h_1E , F g_1 h_1h_2E , F g_1g_2 h_1h_2E ],  \]
where: $E=\Pi_{i=1}^kg_iG$ is  a product of grid moves $g_i$ acting on a grid diagram $G$;   $F=\Pi_{j=m}^ng_j$ is a product of grid moves; and  $f_1, f_2, g_1, g_2$ are  grid moves that represent  FESIs.

\begin{proof} [Proof of theorem \ref{thm:GridMovieMoves}]
($\Rightarrow$)  Suppose that $\G_1,\G_2$ are grid movies that represent smooth isotopic knottings. 
This means that $\G_i$ represents a smooth knotted surface $K_i$ with knotted surface diagram $\K_i$ for $i=1,2$. 
It is given that  $K_1$ and $K_2$ are  smooth isotopic knotted surfaces, which means that we can appeal to theorem \ref{thm:SmoothMovieMoves} to get that  $\K_1$ and $\K_2$  are related by a sequence of movie moves and possibly a level preserving (and second height function respecting) isotopy. 

Since each model smooth movie move is represented by a model grid movie move (theorem \ref{thm:MovieRepresentation}), each smooth level preserving isotopy respecting the height function gives rise to a grid movie isotopy that respects the height function (by construction in section 4.3)  and the output of the  planar grid algorithm is well defined (proposition \ref{prop:WellDefinedGridAlgorithm},  theorem \ref{thm:IsotopyClasses}  and lemma \ref{lem:GridPlanarIsotopicArcs}) then $\G_1$ and $\G_2$ are related by a sequence of grid movie moves and possibly a grid movie isotopy respecting the height function.

($\Leftarrow$)  Reverse the argument given above.
\end{proof}

\section{Surfaces with boundary}
Our main motivation for this work is to investigate whether the combinatorial version of $\HFK^-$ is functorial over smooth marked cobordisms.  
The idea is to use grid movies to represent the surfaces and then compare the maps  induced on the knot Floer homology chain complex by grid movies.  
In this paper, we have presented a generalization of Carter Rieger and Saito's movie move theorem that involves closed surfaces.  
The maps induced by closed surfaces, on knot Floer homology complexes, are decidedly uninteresting, since they induce a map between chain complexes with no generators.  
The aim of this section is to generalize the definitions and constructions used previously in this paper to account for surfaces with boundary components.

Let $(F,\partial F)$ be a surface with boundary.  Choose a  vector $v_1\in \R^4$ and for each $s\in \R$ let  $\R^3_s$ denote the hyperplane orthogonal to $v_1$ at the point $sv_1$.  Again, we will let $K$ refer either to the embedding $K\colon F \hookrightarrow\R^4$ or the image $K(F)$ of the embedding and $\partial K:  =K(\partial F)$.

\begin{defn}
  An embedding of a knotted surface with boundary $(K,\partial K, v_1, a,b)$ is an embedding $K\colon F \hookrightarrow\R^4$  along with a choice of  unit vector $v_1\in \R^4$  and $a<b\in \R$ such that
  \begin{enumerate}
  \item  $\partial K\subset \R^3_a\cup \R^3_b$;
  \item $K\subset \R^3\times[a,b]$;
  \item $\left (K-\partial K\right) \subset \R^3\times (a,b)$.
  \end{enumerate}
\end{defn}
Note that the third condition  prevents type I critical points from occurring in $\R^3_a\cup \R^3_b$.
We often use $(K,\partial K)$ to denote a knotted surface with boundary when the choice of $v_1, a, b$ are not important for the discussion.
\subsection{Boundary Knotted Surface Isotopies}
 \begin{defn}
     Two knotted surfaces with boundary $(K_i, \partial K_i, v_1, a, b), \ i=1,2$  are \emph{boundary ambiently isotopic} if there is an isotopy $H\colon \R^4 \times [0,1] \to \R^4$ such that: 
     \begin{enumerate}
       \item $H(x,0)=x \ \ \forall x\in \R^4$;
       \item $H(K_1(c), 1)=K_2(c) \ \ \forall c\in F$;
       \item $H(K,s)\subset \R^3\times[a,b]  \ \forall s\in [0,1]$;
       \item $\left.H_s\right|_{\R^3_a\cup \R^3_b}= \R^3_a\cup \R^3_b \ \ \forall s\in [0,1]$.
     \end{enumerate}
   \end{defn}
   Two knotted surfaces with boundary are \emph{fixed boundary ambiently isotopic} if the first three conditions above are satisfied and the last condition is strengthened to $\left.H_s\right|_{\R^3_a\cup \R^3_b}=id$.

Note that the definition of knotted surfaces with boundary and their isotopies do not allow any part of the embedded surface to intersect, $\R^3\times (-\infty,\infty)-\R^3\times[a,b]$.  
This prevents a ``knotted'' surface with boundary from ``unknotting'' itself by pulling components over the boundary.

There are different choices of unit vector $v_1$ and $a,b \in \R$.
Therefore in order to compare embeddings with different choices of $v_1,a,b$ we need to describe isotopies that will make two choices of $v_1, a,b$ coincide.   
In order to preserve the meaning of the vectors $v_1$ and $v_2$ as the vectors corresponding to two different Morse functions we will use $w$ as our choice of unit vector for the rest of this subsection.
Suppose that $K_i\colon F_i\hookrightarrow \R^4$ for $i=1,2$ are two embeddings with choices $w_{i}, a_i,b_i$.  
Presently we define: a rotation isotopy  $R_{w_{1}w_{2}}$; a collar isotopy $C$; and a translation isotopy  $T$, which
 lead to a notion of isotopy that does not depend on the choices of $w_1,a,b$.

     \begin{defn}
       Let $w_1, w_2$ be two unit vectors in $\R^4$. A \emph{rotation isotopy} is an isotopy $R_{w_1w_2}\colon \R^4\times [0,1]\to \R^4$ that rotates $w_1$ through the plane spanned by $w_1,w_2$ such that $R_{w_1w_2}(w_1, 0)=w_1$ and $R_{w_1w_2}(w_1,1)= w_2$.
     \end{defn}
     A rotation isotopy applied to a set of points in $\R^4$ is a rigid body rotation.

     \begin{defn}
       A  \emph{collar isotopy} $C_{b,b'}$ of an embedded surface $(K, \partial K)$ is a map  $K_{[a,b]}$ to a surface $K_{[a,b']}$ for $b<b'$ defined by the function $C_{b,b'}\colon \R^3\times \R\times [0,1] \to \R^3\times \R$ satisfying:  
 \begin{align*}
   C_{b,b'}(K_t,t,s)&=\left\{\begin{matrix} (K_t,t)  &\forall t\in[a,b], \forall s\in [0,1]   \\(K_b, t) & \forall t\in [b,b'], s\in [0,1] \st t\leq b+s(b'-b)\\ 0 & \mbox{ otherwise }\end{matrix}\right.
   \end{align*}
     \end{defn}
     The resulting smooth manifold is simply the original surface with a cylinder $\partial K_b\times [b,b']$ glued to the boundary $\partial K_b$, where $\partial K_b:  =K(F)\cap\R^3_b$.

     \begin{defn}
       A \emph{translational isotopy} $T_{a,a'}$ of an embedded surface $(K, \partial K)$ is a map 
\[T_{a,a'}\colon \R^3\times \R\times [0,1]\to \R^3\times \R \mbox{ defined by }H(K_t,t,s)=(K_t, t+s(a'-a))\]
     \end{defn}
     Translational isotopies are also rigid isotopies.

     \begin{lem}
       Any two embedded surfaces with boundary $(K_i,\partial K_i,w_{i},a_i,b_i)$ can be isotoped to embedded surfaces with boundary contained in $\R^3\times[a_2,b_2]$ (decomposed with the vector $w_{2}$).
       \label{lem:RigidIsotopy}
     \end{lem}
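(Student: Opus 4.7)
The plan is to combine the three rigid isotopies -- rotation, translation, and collar -- defined just before the lemma. Starting from $(K_1, \partial K_1, w_1, a_1, b_1)$ and $(K_2, \partial K_2, w_2, a_2, b_2)$, the goal is to bring $K_1$ into the same decomposition as $K_2$ (vector $w_2$, hyperplanes $\R^3_{a_2}, \R^3_{b_2}$), modifying $K_2$ by a collar extension only when necessary.

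First, I would apply the rotation isotopy $R_{w_1 w_2}$ to $K_1$. Because $R_{w_1 w_2}$ is a rigid rotation through the origin in the plane spanned by $w_1, w_2$, it carries $w_1^\perp$ to $w_2^\perp$ and preserves signed distances along the rotation axis. Consequently $R_{w_1w_2}(K_1)$ has boundary in the hyperplanes orthogonal to $w_2$ at heights $a_1$ and $b_1$, so it sits inside $\R^3 \times [a_1, b_1]$ where the $\R^3$ factor is now $w_2^\perp$. Rigidity of the rotation guarantees the embedded nature of the surface is preserved throughout, so the three defining conditions of an embedded surface with boundary carry over.

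Second, I would apply the translational isotopy $T_{a_1, a_2}$ (in the $w_2$ direction) to the rotated surface. This is again a rigid motion, hence an ambient isotopy, and it shifts the rotated $K_1$ into $\R^3 \times [a_2, a_2 + (b_1 - a_1)]$. At this point both surfaces share the decomposition vector $w_2$ and share the lower boundary hyperplane $\R^3_{a_2}$. Finally, to land both surfaces in a common interval $\R^3 \times [a_2, b_2]$, let $B = \max\{\,a_2 + (b_1 - a_1),\; b_2\,\}$ and apply the collar isotopy $C_{a_2 + (b_1 - a_1),\, B}$ to the translated $K_1$ and $C_{b_2, B}$ to $K_2$. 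Relabeling $B$ as $b_2$ (extending the original upper bound if the rotated $K_1$ is too tall) puts both surfaces in the required common strip.

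The main obstacle is not the rotation or translation, which are rigid and automatically valid isotopies, but rather verifying that the collar isotopy is well defined and preserves the embedded structure. This requires using that the defining condition $(K - \partial K) \subset \R^3 \times (a,b)$ guarantees the surface meets $\R^3_a \cup \R^3_b$ transversely at $\partial K$, so that gluing the cylinder $\partial K_b \times [b, b']$ produces a smooth embedding with no new self-intersections. Given this, the composition $C \comp T \comp R_{w_1 w_2}$ provides the required isotopy for $K_1$, while $K_2$ is modified only by a single collar extension, completing the argument.
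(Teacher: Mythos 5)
Your proof uses the same three ingredients as the paper---the rotation $R_{w_1w_2}$, the translation $T$, and the collar isotopy $C$---so the overall route is identical, though you compose them in the order rotation, translation, collar, while the paper computes $T_{a_1,a_2}\comp C_{b_1,\,b_2-a_2+a_1}\comp R_{w_1w_2}$, i.e.\ collar before translation. The two orders are interchangeable since collars along $w_2$ commute with translations along $w_2$ up to re-indexing the endpoints. The genuine difference is that you explicitly handle the case $b_1-a_1>b_2-a_2$: there the paper's single collar $C_{b_1,\,b_2-a_2+a_1}$ would have $b'<b$ and is undefined by the definition of a collar isotopy, and the paper's opening remark that ``the second surface already satisfies the requirement'' silently bypasses this. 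Your fix---collar whichever surface is shorter up to $B=\max\{a_2+(b_1-a_1),\,b_2\}$---is more careful, at the cost that your output strip is $[a_2,B]$ rather than literally $[a_2,b_2]$. That deviation from the lemma's literal wording is harmless, since the lemma's only use (in theorem~\ref{thm:BoundaryMoviesTheorem}) just needs both surfaces to share \emph{some} common $(v_1,a,b)$. Your remark about transversality of $K$ at $\R^3_a\cup\R^3_b$ being what makes the collar well defined is also a point the paper leaves implicit and is worth stating.
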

     \proof
     Let  $(K_i,\partial K_i,w_{i},a_i,b_i)$ be two embedded surfaces with boundary.  If $i=2$ the second embedded surface already satisfies the requirement.  So it only remains to show that we can isotope the first surface to meet the requirements. Consider the following.
     \begin{align*}
       T_{a_1,a_2}C_{b_1, b_2-a_2+a_1} R_{w_{1}w_{2}}(K_1,\partial K_1, w_{1},a_1,b_1)&= T_{a_1,a_2}C_{b_1, b_2-a_2+a_1}(K_1,\partial K_1, w_{2},a_1,b_1)
       \\
       &=T_{a_1,a_2}(K_1,\partial K_1, w_{2},a_1,b_2-a_2+a_1)
       \\
       &=(K_1,\partial K_1, w_{2},a_2,b_2)   
     \end{align*}
We have abused notation and let $K_1, \partial K_1$ represent the final image of each of the isotopies. 
 \qed

   \begin{defn}
     Two embedded surfaces with boundary $(K_i,\partial K_i, w_{1},a_1,b_1)$ with $i=1,2$ are \emph{boundary isotopic} if there exists a composition of rotational, collar, translational and ambient isotopies, $D\colon \R^4\times[0,1]\to \R^4$ satisfying:
     \begin{enumerate}
     \item $D (K_1,0)=K_1$ and $D(\partial K_1,0)=\partial K_1$; 
     \item  $D (K_1,1)=K_2$ and  $D (\partial K_1, 1)=\partial K_2$.
     \end{enumerate}
   \end{defn}

\subsection{Knotted Surface Diagrams with Boundary}
   To get a knotted surface diagram of an embedded surface with boundary  $(K,\partial K, v_1,a,b)$, choose a vector $v \in v_1^\perp$ and project to some hyperplane $\R^3$ of $\R^4$.  
Next, in order to get a movie of a knotted surface diagram with boundary we simply need to check that  $p_1\colon p(K)\to \R$, defined by projection onto $v_1$, gives a generic Morse function for the knotting.
If it is then we are done.  
However, since we chose $v_1$ before we chose $v$ we may need to perturb $v_1$  to get the required Morse function $p_1$.  
To ensure that we are still working with an embedded surface with boundary we  need to perturb the boundary components of $K$ to lie in $\R^3_a$ with respect to $v_1'$ instead $v_1$.  
Furthermore, to get a complementary coordinate system, we must perturb $v_1$ so that $v_1'$ remains in $ v^\perp$.  
Since Morse functions are generic there are no obstructions to perturbing $v_1$ to  $v_1'\in v^\perp$ and since both $v_1$ and $v_1'$ are in $v^\perp$  perturbing the boundary components does not effect the boundary knotted surface diagram.
Finally, if a movie with a fixed height function in each still is desired, choose a generic vector $v_2\in span(v,v_1)^\perp$ that will be used to define the Morse height function in each still.

   \begin{defn}
     An exceptional type II critical point is a boundary point of the double point set that is not a branch point. 
   \end{defn}
   From the definition of a knotted surface with boundary, exceptional type II critical points can only occur in the projection of the  boundary (of the knotted surface with boundary).  

   \begin{defn}
     A \emph{knotted surface movie with boundary} is a  knotted surface diagram with boundary together with a Morse function that separates all type I, II, III critical points save for the exceptional type II critical points.
   \end{defn}
   A \emph{knotted surface movie with boundary and a height function in each still} is simply a  knotted surface movie with boundary along with a second Morse function defined by projecting onto $v_2$.

Note that it is possible for a boundary knotted surface movie to have a critical point contained in the boundary that is not an exceptional critical point.  However, this point cannot be a type I critical point.

We would like to characterize arbitrary boundary isotopies of knotted surfaces with boundary.
This requires us to enlarge the movie moves by adding  boundary movie moves that will account for the changes in the boundary induced by boundary isotopies.  Fortunately, the needed boundary moves are a subset of the FESIs.
\begin{defn}
  The boundary movie moves are comprised of FESIs that do not represent type I critical points (births, saddles and deaths).
\end{defn}

Let $(K,\partial K)$ be a knotted surface with boundary that is represented by the  knotted surface movie with boundary $(\K,\partial\K)$.
Given the definition of a knotted surface with boundary, type I critical points cannot occur in $\partial \K$.  
However, every other type of critical point can occur in the boundary, even though this is not the generic situation.
The types of boundary isotopies that induce a non-exceptional critical point in $\partial \K$ during the isotopy are cataloged by the FESIs that do not represent type I critical points (births, deaths, and saddles).  Specifically, an arbitrary boundary isotopy can induce the following singularities in a boundary component of $\partial \K$:
\begin{itemize}
  \item an addition or deletion of a branch point of $(\K,\partial\K)$ that induces a  Reidemeister move I  in $\partial\K$;
  \item a birth or death of a double point line in the movie that induces a Reidemeister move II in $\partial\K$; 

  \item an addition or deletion of a triple point that induces a Reidemeister move III in $\partial\K$

  \item a singularity of the second height function that induces a multi-local move in $\partial\K$. 
\end{itemize}

   \begin{thm}
       Two  knotted surface movies with boundary, with a height function fixed in each still, represent boundary isotopic knottings if and only if they are related by a finite sequence of movie moves and/or boundary movie moves.
       \label{thm:BoundaryMoviesTheorem}
   \end{thm}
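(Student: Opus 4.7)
The plan is to prove the $(\Leftarrow)$ direction by direct inspection and the $(\Rightarrow)$ direction by reducing to the closed Carter--Rieger--Saito theorem via a capping argument. For $(\Leftarrow)$, each ordinary movie move is supported in the open slab $\R^3\times(a,b)$ and realizes a fixed-boundary ambient isotopy, in particular a boundary isotopy. Each boundary movie move is, by definition, a FESI that does not represent a type I critical point, so the associated singularity occurs in $\partial\K$ and is realized by a level-preserving (and $v_2$-height-respecting) isotopy within $\R^3_a$ or $\R^3_b$; such an isotopy extends to a boundary isotopy of $(K,\partial K)$, and finite composition produces the required boundary isotopy.

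For $(\Rightarrow)$, start with a boundary isotopy $H$ between $(K_1,\partial K_1,w_1,a_1,b_1)$ and $(K_2,\partial K_2,w_2,a_2,b_2)$. First I would apply Lemma \ref{lem:RigidIsotopy} to arrange a common $v_1$ and a common slab $[a,b]$. The rotational and translational isotopies are rigid and induce only a global planar isotopy on each still; the collar isotopy appends cylinders $\partial K\times[b,b']$ whose projections insert only parallel type-II double-point arcs into the movie, and these are tracked by boundary movie moves. It then remains to handle an isotopy $H'$ of $(K_1,\partial K_1)$ to $(K_2,\partial K_2)$ preserving $\R^3_a\cup\R^3_b$ setwise.

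The next step is to split $H'=H'_2\circ H'_1$, where $H'_1$ is supported in $\R^3_a\cup\R^3_b$ and carries $\partial K_1$ to $\partial K_2$ as an embedded link, while $H'_2$ is a fixed-boundary ambient isotopy. The link isotopy $H'_1$ is realized by planar isotopy together with Reidemeister moves and singularities of the still-height function, which is precisely the list of boundary movie moves. For $H'_2$, I would cap off both surfaces by identical disjoint disks placed in $\R^3\times((-\infty,a]\cup[b,\infty))$, producing closed knotted surfaces $\hat K_1,\hat K_2$. Because $H'_2$ fixes the common boundary, it extends by the identity on the caps to an ambient isotopy of $\hat K_1$ to $\hat K_2$; Theorem \ref{thm:CRS1997} then provides a finite sequence of smooth movie moves with fixed height functions carrying the movie of $\hat K_1$ to that of $\hat K_2$.

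The main obstacle is to verify that, after a generic perturbation of this extended isotopy, every movie move in the CRS sequence can be pushed into the open slab $(a,b)$, so that upon restriction to $[a,b]$ they become interior movie moves between $\K_1$ and $\K_2$, while the only singular events forced to cross a wall $\R^3_a$ or $\R^3_b$ are the non-type-I FESIs listed as boundary movie moves. The key geometric point is that the definition of a knotted surface with boundary forbids type I critical points on $\partial\K$, so births, deaths and saddles can be kept a positive distance from each wall, whereas the remaining FESIs restrict cleanly to boundary link events. Concatenating the interior moves from $H'_2$, the boundary moves from $H'_1$, and the preparatory moves produced by the reduction to a common slab yields the required finite sequence of movie and boundary movie moves.
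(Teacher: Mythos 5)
Your $(\Leftarrow)$ direction matches the paper's in spirit. The $(\Rightarrow)$ direction, however, takes a genuinely different and more elaborate route than the paper, and it has a gap at exactly the step you flag as ``the main obstacle.''

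The paper's argument after Lemma~\ref{lem:RigidIsotopy} is a direct singularity count: make the boundary isotopy $I$ generic, look at the finitely many singular events of $p\comp I$, and observe that each interior event realizes one of the ordinary movie moves (by the CRS enumeration of codimension-one singularities) while each event on $\partial\K$ realizes one of the boundary movie moves (using that type~I critical points are excluded from the boundary). No capping, no decomposition of the isotopy. Your proposal instead splits $H'=H'_2\comp H'_1$, caps off, and invokes Theorem~\ref{thm:CRS1997} as a black box. The black-box form of CRS only asserts \emph{existence} of some finite sequence of movie moves relating the two capped movies; it gives you no control over where in $\R^3\times\R$ those moves take place, and in particular nothing forces them to stay inside the slab $[a,b]$. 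To get that localization you would need to retreat to the \emph{proof} of CRS — i.e.\ generically perturb $H'_2$ rel the caps and enumerate the singularities of the perturbed isotopy directly. But once you do that, the caps contribute nothing and you have reproduced the paper's direct argument without the detour. In short, the statement ``Theorem~\ref{thm:CRS1997} then provides a finite sequence \dots which can be pushed into the open slab'' conflates the theorem with its proof and does not, as written, close the argument.

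Two further, smaller points. First, the decomposition $H'=H'_2\comp H'_1$ with $H'_1$ ``supported in $\R^3_a\cup\R^3_b$'' is asserted without justification; an ambient isotopy of $\R^4$ cannot literally be supported on a codimension-one subspace, so you mean a collar, and the existence of such a factorization needs an isotopy-extension argument that you should at least cite. Second, you should make explicit that $H'_1$ (acting in a collar of the walls) introduces only the boundary FESIs — this is the same enumeration of boundary singularities the paper does before stating the theorem, and it is where the exclusion of type~I events on $\partial\K$ is actually used. If you replace the black-box appeal to CRS with the singularity-tracking version, the capping becomes unnecessary and your proof collapses to the paper's; as currently phrased it does not quite go through.
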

\proof
 Let  $(K_i,\partial K_i, v_{i},a_i,b_i)$ for $i=1,2$ be two boundary isotopic  knotted surfaces with boundary.
Using lemma \ref{lem:RigidIsotopy} we can isotope these surfaces to  $(K_i,\partial K_i, v_1,a,b)$ for some $v_1, a,b$.  
We need to  show that $(\K_i,\partial \K_i, v_1, v_2, a, b)$ for $i=1,2$,  the  movies (with fixed height functions in each still)  representing these knotted surfaces with boundary, are related by a finite sequence of movie moves and boundary movie moves iff  $(K_1, \partial K_1)$ and $(K_2, \partial K_2)$ are boundary ambiently isotopic.  Further suppose that $p\colon \R^4\to \R^3$ is the generic projection used in the movie description.

First assume that $(K_i, \partial K_i)$ for $i=1,2$ are related by a boundary isotopy $I\colon \R^4\times [0,1]\to \R^4$ and assign sentences $S_1, S_2$ to the movies $(\K_1, \partial \K_1), (\K_1, \partial \K_1)$.
The boundary isotopy can either induce singularities, of the projection, in the interior or the boundary of  $p\comp I(\K,s)$.
By appealing to the enumeration of singularities found in the proof of the smooth movie theorem \cite{CRS1997} we know that if a singularity  occurs in the interior of $p\comp I(\K_1,s)$, for some $s\in [0,1]$, then the isotopy will induce a change in the sentence $S_1$ that corresponds to a  movie move.
If the  singularity  occurs on the boundary, following the enumeration of possible boundary singularities above,  this will induce a change in the sentence corresponding to one of the boundary movie moves.  
Hence, there exists a sequence of movie moves and boundary movie moves that relate $S_1$ to $S_2$ (and therefore $(\K_1,\partial \K_1)$ to  $(\K_2, \partial \K_2)$).

The fact that for each  movie move and boundary movie move there exists an isotopy of the movie that induces the required  move allows us to reverse the proof.
\qed

If desired one could work with a definition of boundary isotopy that required the isotopy of the boundary be a planar isotopy.  
\begin{defn}
   Planar boundary movie moves consist of the multi-local move FESIs: all FESIs that do not represent type I, II or III critical points.
\end{defn}

\begin{cor}
  \label{cor:PlanarBoundaryMovieTheorem}
       Two  knotted surfaces with boundary $(K_i,\partial K_i)$ for $ i=1,2$ with movies (with a height function fixed in each still)  $(\K_i,\partial \K_i)$  are related by a boundary isotopy that induces a planar isotopy in $\partial \K_i$  if and only if they are related by a finite sequence of movie moves and planar boundary movie moves.  
\end{cor}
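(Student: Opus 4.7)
The plan is to derive this corollary directly from Theorem (the boundary movie theorem) by restricting the list of admissible boundary singularities. The main observation is that a planar isotopy of a link diagram is precisely an ambient isotopy of the plane that does not introduce or eliminate any generic singularities of the diagram; equivalently, along such an isotopy none of the three Reidemeister-type events (branch point, double-line birth/death, triple point) may occur in the boundary projection $p\comp I(\d\K_i, s)$. The only admissible boundary singularities are therefore those associated with the second height function, which are exactly the multi-local FESIs.

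For the forward direction, I would begin by assuming that $(K_1,\d K_1)$ and $(K_2,\d K_2)$ are related by a boundary isotopy $I$ whose restriction to each boundary induces a planar isotopy of $\d\K_i$. Applying Theorem \ref{thm:BoundaryMoviesTheorem} gives a sequence of movie moves and boundary movie moves relating $(\K_1,\d\K_1)$ to $(\K_2,\d\K_2)$. I would then go through the four families of boundary movie moves enumerated in the proof of Theorem \ref{thm:BoundaryMoviesTheorem}, namely the Reidemeister 1, 2, and 3 FESIs and the multi-local FESIs, and argue that the first three families cannot appear: each of those singularities is detected by the boundary projection as a classical Reidemeister move, which is incompatible with the planarity of the induced boundary isotopy. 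What remains must therefore be drawn from movie moves (which do not touch the boundary at all) together with multi-local FESIs, i.e.\ planar boundary movie moves.

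For the reverse direction, I would use the fact, already established inside the proof of Theorem \ref{thm:BoundaryMoviesTheorem}, that every movie move and every boundary movie move is realized by a boundary isotopy inducing the corresponding local change. Movie moves act entirely in the interior of the surface diagram and therefore leave each $\d\K_i$ pointwise fixed, and planar boundary movie moves are by definition multi-local FESIs, whose associated isotopies act on the boundary by level-preserving moves that neither create nor destroy double points or crossings. Concatenating these local isotopies along the finite sequence yields a boundary isotopy $H$ relating $(K_1,\d K_1)$ to $(K_2,\d K_2)$ whose restriction to each $\d\K_i$ avoids all Reidemeister singularities, hence is a planar isotopy of the boundary diagram.

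The only subtlety—and the place where the argument needs to be stated carefully—is the precise meaning of \emph{induces a planar isotopy in $\d\K_i$}. I would read it as: the family of diagrams $\set{p\comp H_s(\d K_1)}_{s\in[0,1]}$ is a one-parameter family of link diagrams with no Reidemeister singularities. Once this interpretation is fixed, the equivalence becomes a bookkeeping correspondence between the four families of FESIs appearing on a boundary component and the four types of local changes a smooth boundary isotopy can produce in the link projection, and the corollary follows directly by discarding the three Reidemeister families.
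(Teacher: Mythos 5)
Your proof is correct and follows essentially the same route as the paper's: the corollary is deduced from Theorem~\ref{thm:BoundaryMoviesTheorem} by observing that a boundary isotopy inducing a planar isotopy of $\d\K_i$ rules out type I, II, and III critical points in the boundary projection, so the only boundary singularities that can occur are those of the second height function, i.e.\ the multi-local FESIs constituting the planar boundary movie moves. Your write-up is somewhat more explicit about the reverse direction and about fixing the meaning of ``induces a planar isotopy,'' but the underlying argument matches the paper's.
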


\begin{proof}
  The proof follows immediately from the theorem by noting that since the boundary isotopy must induce a planar isotopy in the boundary of the movie, no critical points of type I, II or III can occur in the boundary.  This only leaves the possibility of critical points of the second height function occurring in the boundary, which induce the planar boundary movie moves defined above.
\end{proof}

Finally, one could consider boundary isotopies that leave the boundary fixed.
\begin{cor}
  \label{cor:FixedBoundaryMovieTheorem}
       Two boundary knotted surface movies (with a height function fixed in each still)  are related by a boundary isotopy that fixes the boundary  if and only if they are related by a finite sequence of movie moves.    
\end{cor}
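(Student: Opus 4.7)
The plan is to deduce this corollary from Theorem \ref{thm:BoundaryMoviesTheorem} (and the parallel refinement in Corollary \ref{cor:PlanarBoundaryMovieTheorem}), by observing that the rigidity of a fixed boundary isotopy forbids any singularity from occurring in the boundary during the isotopy.

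For the forward direction, suppose $(K_1,\partial K_1)$ and $(K_2,\partial K_2)$ are related by a fixed boundary isotopy $H\colon \R^4\times[0,1]\to \R^4$, so that in particular $H_s|_{\R^3_a\cup \R^3_b}=\mathrm{id}$ for all $s\in[0,1]$. Following the same setup as in the proof of Theorem \ref{thm:BoundaryMoviesTheorem}, use Lemma \ref{lem:RigidIsotopy} to put both knotted surfaces with boundary in the same $(\R^3\times[a,b],v_1)$ decomposition, and assign sentences $S_1,S_2$ to the associated movies $(\K_i,\partial \K_i)$. The enumeration of singularities of the projection $p\circ H(K_1,s)$ used in that proof splits into interior singularities (which yield ordinary movie moves) and boundary singularities (which yield boundary movie moves). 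Because $H_s$ fixes $\R^3_a\cup\R^3_b$ pointwise for all $s$, the image $p\circ H(\partial K_1, s)=p(\partial K_1)$ is constant in $s$, so no singularity of the projection can occur on the boundary. Thus the induced changes in the sentence are all realized by interior movie moves, and $S_1$ is related to $S_2$ by a finite sequence of movie moves alone.

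For the reverse direction, each model movie move is supported in the interior of the movie by construction (its sub-diagram $\D_{iL}$ misses neighborhoods of the boundary stills $\R^3_a,\R^3_b$), and can therefore be realized by an ambient isotopy of $\R^4$ supported away from $\R^3_a\cup \R^3_b$. Composing finitely many such isotopies (together with the level-preserving and height-respecting isotopies that implement the model-move matching, which can also be chosen to be the identity on $\R^3_a\cup\R^3_b$) produces a fixed boundary isotopy between the corresponding knotted surfaces with boundary.

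The main obstacle is the second direction: one must verify that the level-preserving (and $v_2$-respecting) isotopies used to bring a movie into the precise form of a model move can be chosen to be the identity on $\R^3_a\cup\R^3_b$. This follows because every movie move and every intermediate level-preserving isotopy is already localized to a rectangular neighborhood in the interior of $\R^3\times[a,b]$ (by the very definition of movie moves as model moves acting on a sub-diagram $\D_{iL}\subset \R^3\times(a,b)$), so one may damp the supporting isotopy to the identity outside a closed neighborhood of $\D_{iL}$ without affecting the move itself. With this observation in hand, both directions reduce immediately to Theorem \ref{thm:BoundaryMoviesTheorem}.
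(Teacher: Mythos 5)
Your proof is correct and takes essentially the same approach as the paper: the key observation in both is that a fixed-boundary isotopy leaves $p(\partial K)$ constant, so no (new) singularities can arise on the boundary stills, forcing every change in the sentence to be realized by an interior movie move. The paper's proof is terse and only spells out the forward direction, noting that boundary singularities are ruled out; it leaves the reverse direction implicit in the ``if and only if'' of Theorem \ref{thm:BoundaryMoviesTheorem}, whereas you explicitly argue that interior model moves can be realized by isotopies supported away from $\R^3_a\cup\R^3_b$. One small caveat: your claim that ``no singularity of the projection can occur on the boundary'' is slightly stronger than warranted -- the exceptional type II points (endpoints of double-point arcs on the boundary stills) are always present in $\partial\K$; the accurate statement, which the paper makes, is that no singularities other than these occur there, and in particular the isotopy induces no \emph{changes} on the boundary.
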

\begin{proof}
  This follows from the main theorem by observing that if the isotopy fixes the boundary then no singularities, except for exceptional type II singularities will occur in the projection of the boundary.  
This means that all  singularities induced by the isotopy must occur in the interior of the movie.  
Each of these singularities give rise to a movie move.
\end{proof}

\begin{defn}
  A grid movie with boundary, $(\G, \partial \G)$, represents the knotted surface with boundary $(K,\partial K)$ and the knotted surface diagram with boundary $(\K, \partial \K)$ if $(\G, \partial \G)$ could be obtained from $(\K, \partial \K)$ from the process in theorem \ref{thm:MovieRepresentation}. 
\end{defn}

\begin{defn}
  Boundary grid movie moves consist of sequences of grid diagrams found in the dictionary (figure \ref{fig:Dictionary}) that do not correspond to a single smooth still and that do not represent  type I critical points.
\end{defn}
\begin{thm}
\label{thm:GridMovieMoveTheoremWithBoundary}
    Two grid movies with boundary represent isotopic knotted surfaces with boundary if and only if they are related by  a sequence of grid movie moves and/or boundary grid movie moves.

\end{thm}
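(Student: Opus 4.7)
The plan is to mirror the proof of Theorem \ref{thm:GridMovieMoves}, but substitute Theorem \ref{thm:BoundaryMoviesTheorem} for Theorem \ref{thm:SmoothMovieMoves}, and verify that the dictionary used to translate between smooth and grid moves restricts correctly to the boundary. Since a grid movie with boundary is, by definition, obtained from some knotted surface movie with boundary $(\K,\partial\K)$ via the construction of Theorem \ref{thm:MovieRepresentation}, we already have surfaces $(K_i,\partial K_i)$ and surface diagrams $(\K_i,\partial\K_i)$ attached to the given grid movies $(\G_i,\partial\G_i)$ for $i=1,2$.

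For the $(\Rightarrow)$ direction, assume $(K_1,\partial K_1)$ and $(K_2,\partial K_2)$ are boundary isotopic. First I would use lemma \ref{lem:RigidIsotopy} to normalize the choices of $v_1,a,b$ so that both surfaces live in $\R^3\times[a,b]$ with the same bounding slices. Then Theorem \ref{thm:BoundaryMoviesTheorem} produces a finite sequence of movie moves and/or boundary movie moves, together with a level preserving isotopy respecting the second height function, connecting $(\K_1,\partial\K_1)$ to $(\K_2,\partial\K_2)$. Each model movie move in the interior is realized by a model grid movie move (this is exactly what the construction in Theorem \ref{thm:MovieRepresentation} and the dictionary in figure \ref{fig:Dictionary} provide), and each level preserving, height-respecting isotopy is absorbed into a grid movie isotopy via the well-definedness statements (proposition \ref{prop:WellDefinedGridAlgorithm}, theorem \ref{thm:IsotopyClasses}, and lemma \ref{lem:GridPlanarIsotopicArcs}), as already invoked in the proof of Theorem \ref{thm:GridMovieMoves}.

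The new content is the boundary piece. Here I would argue that every boundary movie move is exactly a FESI that is \emph{not} a type I critical point, and by the very definition of boundary grid movie moves (sequences of grid diagrams in figure \ref{fig:Dictionary} that do not correspond to a single smooth still and do not represent type I critical points), each such boundary movie move is realized by a boundary grid movie move. Concretely, the four cases enumerated in the boundary movie move discussion --- addition/deletion of a branch point (grid R1 (de)stabilization on the boundary), birth/death of a double point line (grid R2 commutation on the boundary), addition/deletion of a triple point (grid R3 commutation on the boundary), and a singularity of the second height function (kink (de)stabilization, transfer, or multi-local commutation on the boundary) --- are precisely the entries of the dictionary exempted from type I moves. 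Thus the full sequence translates into a sequence of grid movie moves together with boundary grid movie moves taking $(\G_1,\partial\G_1)$ to $(\G_2,\partial\G_2)$.

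For the $(\Leftarrow)$ direction, I would simply reverse the argument: each grid movie move and each boundary grid movie move in $(\G_1,\partial\G_1)\leadsto(\G_2,\partial\G_2)$ is, by construction of the dictionary, realized by a corresponding smooth movie move or boundary movie move on any surface diagram representing it. Then Theorem \ref{thm:BoundaryMoviesTheorem} produces a boundary ambient isotopy between $(K_1,\partial K_1)$ and $(K_2,\partial K_2)$. The main obstacle --- and the only genuinely new thing to check --- is the bookkeeping at the boundary: that a grid move which happens to occur in the boundary row/column of $(\G,\partial\G)$ is correctly identified with a FESI of $\partial\K$ rather than with a type I move, and that no grid move in the interior of $\G$ can accidentally create or destroy part of $\partial\G$. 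This follows because births, deaths, and saddles are the only grid moves that change the topological type of a link, and these are explicitly excluded from the boundary grid movie moves, matching the exclusion of type I critical points on $\partial\K$ in the definition of a knotted surface movie with boundary.
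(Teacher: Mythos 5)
Your proposal is correct and takes essentially the same approach as the paper: invoke Theorem \ref{thm:BoundaryMoviesTheorem} to get a sequence of movie moves and boundary movie moves relating the knotted surface movies with boundary, translate each via the dictionary in figure \ref{fig:Dictionary} to grid movie moves and boundary grid movie moves, and reverse for the converse. Your extra detail (the normalization via lemma \ref{lem:RigidIsotopy} and the case-by-case matching of boundary FESIs to boundary grid moves) is consistent with, though more explicit than, the paper's argument, which leaves that bookkeeping implicit.
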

\begin{proof}
Let $(\G_1, \partial \G_1)$ and $(\G_2, \partial \G_2)$ represent isotopic knottings with boundary $(K_i, \partial K_i)$ with movies $(\K_i, \partial \K_i, v_1,v_2,a,b)$ for $i=1,2$ (that have a fixed height function in each still).
Since $(K_1,\partial K_1)$ is boundary isotopic to $(K_2,\partial K_2)$ theorem \ref{thm:BoundaryMoviesTheorem} implies that there exists a finite sequence of movie moves and/or boundary movie moves relating  $(\K_1, \partial \K_1, v_1,v_2,a,b)$ and  $(\K_2, \partial \K_2, v_1,v_2,a,b)$.
For each one of these movie moves and boundary movie moves there is a corresponding grid movie move or boundary grid movie move gotten from the dictionary in figure \ref{fig:Dictionary}.  
Therefore,  $(\G_1, \partial \G_1)$ and $(\G_2, \partial \G_2)$  are related by a finite sequence of grid movie moves and/or boundary grid movie moves.
Reverse the argument to get the other direction.

\end{proof}

\begin{defn}
  Planar boundary grid movie moves consist of sequences of grid diagrams found in the dictionary (figure \ref{fig:Dictionary}) that do not correspond to a single smooth still and that do not represent  type I, II or III critical points.
\end{defn}
Note that transfer moves represent camel-back moves, and therefore are a planar boundary grid move,  even though they contain a birth and death of a double point arc.

\begin{cor}
    Two grid movies with boundary represent two  knottings with boundary $(K_i, \partial K_i), i=1,2$ with movies $(\K_i, \partial \K_i, v_1, v_2, a, b)$ that are related by a boundary isotopy that induces a planar isotopy in $\partial \K_i$, if and only if they are related by  a sequence of grid movie moves and/or boundary grid movie moves.
\end{cor}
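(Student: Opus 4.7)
The plan is to mimic the proof of Theorem \ref{thm:GridMovieMoveTheoremWithBoundary}, but with the sharper input provided by Corollary \ref{cor:PlanarBoundaryMovieTheorem}, and then to read the resulting sequence of smooth moves off the dictionary in Figure \ref{fig:Dictionary}.

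First, for the forward direction, suppose $(\G_1,\partial\G_1)$ and $(\G_2,\partial\G_2)$ represent boundary knotted surfaces $(K_i,\partial K_i)$ with movies $(\K_i,\partial\K_i,v_1,v_2,a,b)$ such that there is a boundary isotopy between the $K_i$ inducing a planar isotopy of the boundary diagrams. By Corollary \ref{cor:PlanarBoundaryMovieTheorem}, the smooth movies $(\K_1,\partial\K_1)$ and $(\K_2,\partial\K_2)$ are related by a finite sequence of movie moves and planar boundary movie moves (the latter being precisely the FESIs arising from critical points of the second height function, i.e.\ the multi-local and camel-back/cusp moves). Using the dictionary in Figure \ref{fig:Dictionary} exactly as in the proof of Theorem \ref{thm:GridMovieMoveTheoremWithBoundary}, each interior movie move is replaced by the corresponding grid movie move, and each planar boundary movie move is replaced by the corresponding planar boundary grid movie move. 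Combining these with the grid movie isotopies coming from the level preserving ambient isotopies (which exist by Proposition \ref{prop:WellDefinedGridAlgorithm}, Theorem \ref{thm:IsotopyClasses} and Lemma \ref{lem:GridPlanarIsotopicArcs}) yields a finite sequence of grid movie moves and planar boundary grid movie moves relating $(\G_1,\partial\G_1)$ to $(\G_2,\partial\G_2)$.

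For the reverse direction, suppose $(\G_1,\partial\G_1)$ and $(\G_2,\partial\G_2)$ are related by such a sequence. Using the dictionary in reverse, together with Theorem \ref{thm:MovieRepresentation}, produce smooth movies $(\K_i,\partial\K_i)$ whose associated grid movies are the $\G_i$, and read off a sequence of smooth movie moves and planar boundary movie moves relating them. Corollary \ref{cor:PlanarBoundaryMovieTheorem} then yields a boundary isotopy between $(K_1,\partial K_1)$ and $(K_2,\partial K_2)$ whose restriction to the boundary is a planar isotopy.

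The main technical point, and the one I would be most careful about, is verifying that the translation via the dictionary really preserves the dichotomy ``interior critical point'' versus ``second height function critical point in the boundary.'' In particular the transfer move needs special attention: it corresponds to a camel-back FESI, which is a multi-local/second height function move, yet its intermediate stills contain a birth and death of a double point arc. One has to check that the whole transfer move is treated as a single planar boundary grid movie move rather than a sequence containing a forbidden type II event in the boundary; this is precisely the content of the remark immediately following the definition of planar boundary grid movie moves, and it is what makes the translation well defined. Once this bookkeeping is in place, both directions are immediate consequences of Theorem \ref{thm:MovieRepresentation} and Corollary \ref{cor:PlanarBoundaryMovieTheorem}.
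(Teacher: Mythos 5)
Your proposal is correct and takes essentially the same route as the paper: the paper's proof of this corollary is a one-line citation of Theorem \ref{thm:GridMovieMoveTheoremWithBoundary} and Corollary \ref{cor:PlanarBoundaryMovieTheorem}, and you simply unfold that citation by running the dictionary translation (as in Theorem \ref{thm:GridMovieMoveTheoremWithBoundary}) applied to the sharper smooth input supplied by Corollary \ref{cor:PlanarBoundaryMovieTheorem}. Your explicit handling of the transfer move---flagging that the camel-back FESI must be counted as a single planar boundary move even though its intermediate stills show a birth and death of a double point arc---is precisely the content of the paper's remark immediately preceding this corollary, so you have correctly identified the one genuinely delicate bookkeeping point; the only thing worth adding is that what actually comes out of the translation are \emph{planar} boundary grid movie moves, a subclass of the boundary grid movie moves that the statement names, so the reverse direction implicitly relies on the same restricted class for the dichotomy to hold.
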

\begin{proof}
This follows from  theorem \ref{thm:GridMovieMoveTheoremWithBoundary} and corollary \ref{cor:PlanarBoundaryMovieTheorem}.
\end{proof}
\begin{cor}
  Two grid movies with fixed boundary represent isotopic knotted surfaces with fixed boundary if and only if they are related by a sequence of movie moves.
\end{cor}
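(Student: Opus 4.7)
The plan is to mirror, in the combinatorial setting, the proof of Corollary \ref{cor:FixedBoundaryMovieTheorem}, using the dictionary of Figure \ref{fig:Dictionary} to pass between smooth and grid data. More precisely, suppose $(\G_i,\partial\G_i)$, $i=1,2$, are two grid movies with fixed boundary representing knotted surfaces with boundary $(K_i,\partial K_i)$ via movies $(\K_i,\partial\K_i,v_1,v_2,a,b)$, and suppose these are isotopic through an isotopy fixing the boundary. By Corollary \ref{cor:FixedBoundaryMovieTheorem}, the smooth movies $(\K_1,\partial\K_1)$ and $(\K_2,\partial\K_2)$ are related by a finite sequence of (ordinary, non-boundary) movie moves. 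Each such movie move is, by construction, represented by a model grid movie move, so applying the dictionary to this sequence produces a sequence of grid movie moves relating $(\G_1,\partial\G_1)$ and $(\G_2,\partial\G_2)$, up to a grid movie isotopy which may be absorbed into the sequence.

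In more detail, I would follow the same scheme used in the proof of Theorem \ref{thm:GridMovieMoveTheoremWithBoundary}. First, lift the given boundary-fixing isotopy to a level-preserving isotopy between smooth movies that respects the $v_2$ height function (Theorem \ref{thm:MovieRepresentation} together with the boundary constructions in the previous subsection). Next, use Corollary \ref{cor:FixedBoundaryMovieTheorem} to obtain that every singularity encountered during this isotopy occurs in the interior of the movie, since the boundary stays fixed and hence no type~I, II, or III critical points, no cusps, no camel-backs, and no multi-local moves can be pushed onto $\partial\K_i$. Each such interior singularity translates to one of the $31$ smooth movie moves of Theorem \ref{thm:GridMovieMoves}. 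Finally, invoke the dictionary of Figure \ref{fig:Dictionary}, together with Proposition \ref{prop:WellDefinedGridAlgorithm}, Theorem \ref{thm:IsotopyClasses}, and Lemma \ref{lem:GridPlanarIsotopicArcs}, to replace each smooth move (and any intervening level-preserving, height-respecting isotopy) with a grid movie move (respectively a grid movie isotopy) acting on $(\G_1,\partial\G_1)$.

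For the converse, each grid movie move, being a model move, arises from a specific model smooth movie move via the dictionary, and grid movie isotopies were constructed in Section~4.3 as shadows of smooth level-preserving isotopies that respect the still height function. So any sequence of grid movie moves relating $(\G_1,\partial\G_1)$ and $(\G_2,\partial\G_2)$ can be lifted to a sequence of smooth movie moves and level-preserving isotopies between $(\K_1,\partial\K_1)$ and $(\K_2,\partial\K_2)$; since none of these moves or isotopies disturbs the boundary (only movie moves appear, not boundary movie moves), Corollary \ref{cor:FixedBoundaryMovieTheorem} guarantees that the resulting smooth isotopy is a boundary-fixing isotopy between $(K_1,\partial K_1)$ and $(K_2,\partial K_2)$.

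I expect the main obstacle to be bookkeeping rather than any new geometric idea: one must check that the grid moves appearing in the dictionary image of an ordinary movie move never alter the grid sub-diagrams meeting the boundary levels $\R^3_a$ and $\R^3_b$. This amounts to verifying, in each entry of Figure \ref{fig:Dictionary} corresponding to an interior singularity, that the local grid moves can be localized strictly away from the top and bottom stills of the grid movie, which follows from the fact that the smooth singularities are themselves localized away from $\partial\K$ and from the well-definedness of the planar grid algorithm (Proposition \ref{prop:WellDefinedGridAlgorithm}) applied to small neighborhoods of those singularities.
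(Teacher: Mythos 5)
Your proof is correct and takes essentially the same route as the paper, which simply cites Theorem \ref{thm:GridMovieMoveTheoremWithBoundary} and Corollary \ref{cor:FixedBoundaryMovieTheorem}; you have merely unpacked the one-line citation into the explicit lift-to-smooth / apply-fixed-boundary-corollary / translate-back-via-dictionary argument that those references encapsulate. The additional bookkeeping remark at the end (that interior singularities localize away from the boundary stills) is a reasonable sanity check but is already implicit in the cited results.
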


\begin{proof}
This follows from theorem \ref{thm:GridMovieMoveTheoremWithBoundary} and corollary \ref{cor:FixedBoundaryMovieTheorem}.
\end{proof}

\bibliographystyle{amsplain}
\bibliography{Research}

\providecommand{\bysame}{\leavevmode\hbox to3em{\hrulefill}\thinspace}
\providecommand{\MR}{\relax\ifhmode\unskip\space\fi MR }
\providecommand{\MRhref}[2]{%
  \href{http://www.ams.org/mathscinet-getitem?mr=#1}{#2}
}
\providecommand{\href}[2]{#2}
\begin{thebibliography}{10}

\bibitem{Bing83}
R.~H. Bing, \emph{The geometric topology of 3-manifolds}, American Mathematical
  Society Colloquium Publications, vol.~40, American Mathematical Society,
  Providence, RI, 1983. \MR{728227 (85j:57001)}

\bibitem{CRS1997}
J.~Scott Carter, Joachim~H. Rieger, and Masahico Saito, \emph{A combinatorial
  description of knotted surfaces and their isotopies}, Adv. Math. \textbf{127}
  (1997), no.~1, 1--51. \MR{1445361 (98c:57023)}

\bibitem{CS1993}
J.~Scott Carter and Masahico Saito, \emph{Reidemeister moves for surface
  isotopies and their interpretation as moves to movies}, J. Knot Theory
  Ramifications \textbf{2} (1993), no.~3, 251--284. \MR{1238875 (94i:57007)}

\bibitem{Cromwell1995}
Peter~R. Cromwell, \emph{Embedding knots and links in an open book i: Basic
  properties}, Topology Appl. \textbf{64} (1995), 37--58.

\bibitem{Dynnikov2006}
I.~A. Dynnikov, \emph{Arc-presentations of links: monotonic simplification},
  Fund. Math. \textbf{190} (2006), 29--76. \MR{2232855 (2007e:57006)}

\bibitem{Fox62}
R.~H. Fox, \emph{A quick trip through knot theory}, Topology of 3-manifolds and
  related topics ({P}roc. {T}he {U}niv. of {G}eorgia {I}nstitute, 1961),
  Prentice-Hall, Englewood Cliffs, N.J., 1962, pp.~120--167. \MR{0140099 (25
  \#3522)}

\bibitem{Graham2014}
Matthew {Graham}, \emph{{ Marked Smooth Movies}}, arXiv:1407.4592v1 (2014).

\bibitem{Juhasz2009}
Andras Juh{\'a}sz, \emph{Cobordisms of sutured manifolds}, eprint \url
  {http://arxiv.org/abs/0910.4382v2} (2009).

\bibitem{MOS2006}
Ciprian Manolescu, Peter Ozsv{\'a}th, and Sucharit Sarkar, \emph{A
  combinatorial description of knot {F}loer homology}, Ann. of Math. (2)
  \textbf{169} (2009), no.~2, 633--660. \MR{2480614 (2009k:57047)}

\bibitem{MOST2006}
Ciprian Manolescu, Peter Ozsv\'{a}th, Zolt{\'a}n Szab{\'o}, and Dylan Thurston,
  \emph{On combinatorial link {F}loer homology}, Geometry and Topology,
  11:2339-2412, 2007 (2006).

\bibitem{MOT2009}
Ciprian Manolescu, Peter Ozsv\'{a}th, and Dylan Thurston, \emph{Grid diagrams
  and {H}eegaard {F}loer invariants}, eprint
  \url{http://arxiv.org/abs/0910.0078v1} (2009).

\bibitem{OS:Knot2003}
Peter Ozsv{\'a}th and Zolt{\'a}n Szab{\'o}, \emph{Holomorphic disks and knot
  invariants}, Adv. Math. \textbf{186} (2004), no.~1, 58--116. \MR{2065507
  (2005e:57044)}

\bibitem{OS2004}
\bysame, \emph{Holomorphic disks and topological invariants for closed
  three-manifolds}, Ann. of Math. (2) \textbf{159} (2004), no.~3, 1027--1158.
  \MR{2113019 (2006b:57016)}

\bibitem{Rasmussen2003}
Jacob Rasmussen, \emph{{F}loer homology and knot complements}, Ph.D. thesis,
  Harvard University, 2003.

\bibitem{Roseman1998}
Dennis Roseman, \emph{Reidemeister-type moves for surfaces in four-dimensional
  space}, Knot theory ({W}arsaw, 1995), Banach Center Publ., vol.~42, Polish
  Acad. Sci., Warsaw, 1998, pp.~347--380. \MR{1634466 (99f:57029)}

\bibitem{Sarkar2010}
Sucharit Sarkar, \emph{Grid diagrams and the {O}zsv\'{a}th-{S}z\'{a}bo
  tau-invariant}, eprint \url{http://arxiv.org/abs/1011.5265v1} (2010).

\bibitem{Sarkar2011}
\bysame, \emph{Moving basepoints and the induced automorphisms of link {F}loer
  homology}, eprint \url{http://arxiv.org/abs/1109.2168v1} (2011).

\end{thebibliography}
\end{document}